\DeclareMathOperator{\Coker}{Coker}
\DeclareMathOperator{\Gal}{Gal}
\DeclareMathOperator{\Hom}{Hom}
\DeclareMathOperator{\Img}{Im}
\DeclareMathOperator{\Ker}{Ker}
\DeclareMathOperator{\res}{res}
\DeclareMathOperator{\trg}{trg}
\begin{document}

\newtheorem{thm}{Theorem}[section]
\newtheorem{cor}[thm]{Corollary}
\newtheorem{lem}[thm]{Lemma}
\newtheorem{prop}[thm]{Proposition}
\newtheorem{defin}[thm]{Definition}
\newtheorem{exam}[thm]{Example}
\newtheorem{examples}[thm]{Examples}
\newtheorem{rem}[thm]{Remark}
\newtheorem{case}{\sl Case}
\newtheorem{claim}{Claim}
\newtheorem{prt}{Part}
\newtheorem*{mainthm}{Main Theorem}
\newtheorem*{thmA}{Theorem A}
\newtheorem*{thmB}{Theorem B}
\newtheorem*{thmC}{Theorem C}
\newtheorem*{thmD}{Theorem D}
\newtheorem{question}[thm]{Question}
\newtheorem*{notation}{Notation}
\swapnumbers
\newtheorem{rems}[thm]{Remarks}
\newtheorem*{acknowledgment}{Acknowledgment}
\newtheorem{questions}[thm]{Questions}
\numberwithin{equation}{section}

\newcommand{\ab}{\mathrm{ab}}
\newcommand{\Bock}{\mathrm{Bock}}
\newcommand{\dec}{\mathrm{dec}}
\newcommand{\dirlim}{\varinjlim}
\newcommand{\discup}{\ \ensuremath{\mathaccent\cdot\cup}}
\newcommand{\divis}{\mathrm{div}}
\newcommand{\Galp}{\underline{\mathrm{Gal}}_p}
\newcommand{\nek}{,\ldots,}
\newcommand{\inv}{^{-1}}
\newcommand{\isom}{\cong}
\newcommand{\pr}{\mathrm{pr}}
\newcommand{\PrGr}{\underline{\mathrm{PrGr}}}
\newcommand{\sep}{\mathrm{sep}}
\newcommand{\tagg}{^{''}}
\newcommand{\tensor}{\otimes}
\newcommand{\alp}{\alpha}
\newcommand{\gam}{\gamma}
\newcommand{\Gam}{\Gamma}
\newcommand{\del}{\delta}
\newcommand{\Del}{\Delta}
\newcommand{\eps}{\epsilon}
\newcommand{\lam}{\lambda}
\newcommand{\Lam}{\Lambda}
\newcommand{\sig}{\sigma}
\newcommand{\Sig}{\Sigma}
\newcommand{\bfA}{\mathbf{A}}
\newcommand{\bfB}{\mathbf{B}}
\newcommand{\bfC}{\mathbf{C}}
\newcommand{\bfF}{\mathbf{F}}
\newcommand{\bfP}{\mathbf{P}}
\newcommand{\bfQ}{\mathbf{Q}}
\newcommand{\bfR}{\mathbf{R}}
\newcommand{\bfS}{\mathbf{S}}
\newcommand{\bfT}{\mathbf{T}}
\newcommand{\bfZ}{\mathbf{Z}}
\newcommand{\dbA}{\mathbb{A}}
\newcommand{\dbC}{\mathbb{C}}
\newcommand{\dbF}{\mathbb{F}}
\newcommand{\dbN}{\mathbb{N}}
\newcommand{\dbQ}{\mathbb{Q}}
\newcommand{\dbR}{\mathbb{R}}
\newcommand{\dbU}{\mathbb{U}}
\newcommand{\dbZ}{\mathbb{Z}}
\newcommand{\grf}{\mathfrak{f}}
\newcommand{\gra}{\mathfrak{a}}
\newcommand{\grm}{\mathfrak{m}}
\newcommand{\grp}{\mathfrak{p}}
\newcommand{\grq}{\mathfrak{q}}
\newcommand{\calA}{\mathcal{A}}
\newcommand{\calB}{\mathcal{B}}
\newcommand{\calC}{\mathcal{C}}
\newcommand{\calE}{\mathcal{E}}
\newcommand{\calG}{\mathcal{G}}
\newcommand{\calH}{\mathcal{H}}
\newcommand{\calK}{\mathcal{K}}
\newcommand{\calL}{\mathcal{L}}
\newcommand{\calW}{\mathcal{W}}
\newcommand{\calV}{\mathcal{V}}

\author{Ido Efrat}
\address{Mathematics Department\\
Ben-Gurion University of the Negev\\
P.O.\ Box 653, Be'er-Sheva 84105\\
Israel}
\email{efrat@math.bgu.ac.il}

\author{J\'an Min\'a\v c}
\address{Mathematics Department\\
University of Western Ontario\\
London\\
Ontario\\
Canada N6A 5B7}
\email{minac@uwo.ca}

\thanks{
The first author was supported by the Israel Science Foundation (grants No.\ 23/09 and 152/13).
The second author was supported in part by National Sciences and Engineering Council of Canada grant R0370A01.}

\begin{abstract}
Let $q=p^s$ be a prime power, $F$ a field containing a root of unity of order $q$, and $G_F$ its absolute Galois group.
We determine a new canonical quotient $\Gal(F_{(3)}/F)$ of $G_F$ which encodes the full mod-$q$ cohomology ring $H^*(G_F,\dbZ/q)$
and is minimal with respect to this property.
We prove some fundamental structure theorems related to these quotients.
In particular, it is shown that when $q=p$ is an odd prime, $F_{(3)}$ is the compositum of all Galois extensions $E$ of $F$
such that $\Gal(E/F)$ is isomorphic to $\{1\}$, $\dbZ/p$ or to the nonabelian group $H_{p^3}$
of order $p^3$ and exponent $p$.
\end{abstract}

\title{Galois groups and cohomological functors}
\subjclass[2010]{Primary 12G05, 12E30}

\maketitle

\section{Introduction}
Let $p$ be a fixed prime number and $q=p^s$ a fixed $p$-power, where $s\geq1$.
For a profinite group $G$, let  $H^*(G)=\bigoplus_{i=0}^\infty H^i(G,\dbZ/q)$ be the cohomology (graded)
ring with the trivial action of $G$ on $\dbZ/q$.
We will be mostly interested in the case where $G=G_F$ is the absolute Galois group of a field $F$
which contains a root of unity of order $q$.
The ring $H^*(G_F)$, and even its degree $\leq2$ part, is known to encode important arithmetical information on $F$
(see below).
In this paper we ask how much Galois-theoretic information is needed to compute $H^*(G_F)$.
More specifically, we characterize the minimal quotient of $G_F$ which determines it.
In \cite{CheboluEfratMinac} it was shown that  $H^*(G_F)$
is determined by a quite small quotient $G_F^{[3]}=G_F/(G_F)^{(3)}$ of $G_F$.
Here $G^{(i)}=G^{(i,q)}$, $i=1,2,3\nek$ is the \textsl{descending $q$-central sequence} of a given profinite $G$
(see \S\ref{section on cohomology}).
Namely, the inflation map $\inf\colon H^*(G_F^{[3]})_\dec\to H^*(G_F)$ is an isomorphism,
where $H^*(G_F^{[3]})_\dec$ denotes the subring of $H^*(G_F^{[3]})$ generated by degree $1$ elements.
In the present paper we first find an even smaller quotient $(G_F)_{[3]}=G_F/(G_F)_{(3)}$ of $G_F$ with the same property.
Here for a profinite group $G$ we set $G_{(3)}=G^q[G^{(2)},G]$ if $p>2$, and
$G_{(3)}=G^{2q}[G^{(2)},G]$ if $p=2$  (see \S\ref{section on cohomology}).
Thus, when $q=p$, $G_{(3)}=G^{(3)}$ is the third term in the Zassenhaus $p$-filtration of $G$.
Moreover, we prove that this quotient is  \textsl{minimal} with respect to this property:

\begin{thmA}
Let $N$ be a closed normal subgroup of $G_F$.
Then the inflation map $H^*(G_F/N)_\dec\to H^*(G_F)$ is an isomorphism if and only if $N\leq (G_F)_{(3)}$.
In particular,  $(G_F)_{[3]}$ determines $H^*(G_F)$.
\end{thmA}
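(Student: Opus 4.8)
The plan is to separate the Galois-theoretic input from a purely group-theoretic comparison, and to treat the two implications by dual transgression computations. Throughout write $\tilde G=G_F/(G_F)^{(3)}$ and $\bar G=G_F/(G_F)_{(3)}$; since $(G_F)^{(3)}\leq (G_F)_{(3)}\leq (G_F)^{(2)}$, there is a surjection $\tilde G\twoheadrightarrow\bar G$, and every subgroup in sight lies in $(G_F)^{(2)}$, so inflation is an isomorphism on $H^1$ for all the quotients involved. Because $H^*(G_F)$ is generated in degree $1$ (the Bloch--Kato/Rost--Voevodsky theorem, which underlies the isomorphism $\inf\colon H^*(\tilde G)_\dec\xrightarrow{\sim}H^*(G_F)$ of \cite{CheboluEfratMinac}), an inflation map of decomposable rings is surjective onto $H^*(G_F)$ exactly when it is surjective in degree $1$, i.e.\ exactly when $N\leq (G_F)^{(2)}$. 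This disposes of surjectivity and of the easy half of minimality, and reduces Theorem~A to a question of injectivity on decomposables.

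For the ``if'' direction I would reduce to a universal comparison: the key claim is that for an arbitrary profinite group $G$ the map $\inf\colon H^*(G/G_{(3)})_\dec\to H^*(G/G^{(3)})_\dec$ is an isomorphism. Granting this for $G=G_F$ and composing with the isomorphism $H^*(\tilde G)_\dec\xrightarrow{\sim}H^*(G_F)$ yields the case $N=(G_F)_{(3)}$, and for general $N\leq (G_F)_{(3)}$ a short diagram chase (each inflation is surjective on decomposables, and the composite to $H^*(G_F)$ is injective) promotes this to the full statement. To prove the claim, observe that $K=G_{(3)}/G^{(3)}=G^qG^{(3)}/G^{(3)}$ is central in $G/G^{(3)}$ and is generated by the images of the $q$-th powers $g^q$. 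The five-term exact sequence of the central extension $1\to K\to G/G^{(3)}\to G/G_{(3)}\to 1$, together with the fact that $\inf$ is an isomorphism on $H^1$, identifies the degree-$2$ kernel of $\inf$ with $\trg(\Hom(K,\dbZ/q))$; the $q$-th power map $G/G^{(2)}\to K$ then identifies these transgression classes with Bockstein classes $\Bock(\chi)$, which for $p$ odd are indecomposable. Hence the degree-$2$ kernel meets the decomposable subring trivially and $\inf$ is injective on $H^2_\dec$.

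Passing from degree $2$ to all degrees is where I expect the main work to lie. The natural tool is the Lyndon--Hochschild--Serre spectral sequence of the central extension $1\to K\to G/G^{(3)}\to G/G_{(3)}\to 1$: since $K$ is essentially elementary abelian with trivial action, the higher differentials are governed by multiplication by the transgressed Bockstein classes, so the entire kernel of $\inf$ on $H^*(G/G_{(3)})$ lies in the ideal generated by these Bocksteins. The crux is therefore to show that this Bockstein ideal is complementary to the decomposable subring, i.e.\ that the ring splits as ``decomposables $\oplus$ Bockstein part'' in the relevant range; this is the structural heart of the argument, and is precisely the feature that distinguishes the power part (which inflation may ignore) from the commutator part. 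Once the splitting is in place, $\inf$ is injective on $H^n_\dec$ for every $n$ and the ``if'' direction is complete.

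For minimality I would produce an explicit obstruction by a dual transgression computation. Suppose $N\leq (G_F)^{(2)}$ but $N\not\leq (G_F)_{(3)}$, so $\bar N=N(G_F)_{(3)}/(G_F)_{(3)}$ is a nontrivial subgroup of $\bar G^{(2)}=[\bar G,\bar G]$, which is central in $\bar G$ because $[(G_F)^{(2)},G_F]\leq (G_F)_{(3)}$. Choosing a nonzero $\psi\colon\bar N\to\dbZ/q$ and pulling back along $N\twoheadrightarrow\bar N$ gives $\psi_N\in\Hom(N,\dbZ/q)^{G_F}$ whose transgression is a nonzero element of $\Ker(\inf_N)$ in degree $2$. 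By naturality of transgression for the evident morphism of extensions, $\trg(\psi_N)$ is the inflation of $\trg_{\bar G/\bar N}(\psi)$; and since $\bar G$ has exponent $q$, the pushed-out extension $1\to\dbZ/q\to E'\to\bar G/\bar N\to 1$ again has exponent $q$, so its class is decomposable, the absence of any order increase forcing the cocycle to be a commutator pairing, i.e.\ a cup product. Thus $\trg(\psi_N)$ is a nonzero decomposable class in $\Ker(\inf_N)$, contradicting injectivity and forcing $N\leq (G_F)_{(3)}$. The last point to handle is $p=2$, where cup squares coincide with Bocksteins ($\chi\cup\chi=\Bock(\chi)$) and the clean power/commutator dichotomy for exponent $q$ breaks down; this is exactly why $(G_F)_{(3)}$ is defined with $G^{2q}$ in place of $G^q$, and the two computations above go through once ``exponent $q$'' is replaced by the corresponding condition. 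I expect this case to demand the most careful bookkeeping.
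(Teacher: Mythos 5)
Your proposal is a genuinely different route from the paper's (the paper never works with the cohomology of the nonabelian quotients $\bar G=G_F/(G_F)_{(3)}$, $\tilde G=G_F/(G_F)^{(3)}$ directly; it sets up a duality between subgroups of $H^2(G^{[2]})$ and pairs $(G^{(2)},T_0)$ and does all computations over the elementary abelian quotient), and while your first paragraph (surjectivity, reduction to $N\leq G_F^{(2)}$) and your reduction of general $N\leq (G_F)_{(3)}$ to the case $N=(G_F)_{(3)}$ are fine, both of your main steps have genuine gaps. The worst one is in the minimality direction: you rest it on the principle that a central extension $1\to\dbZ/q\to E'\to\bar Q\to1$ whose total group has exponent $q$ must have decomposable class. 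That principle is false for nonabelian $\bar Q$. For $p\geq5$ let $E'$ be the quotient of the free two-generator group of exponent $p$ and nilpotency class $3$ by the central subgroup generated by $[[x,y],y]$; then $E'$ has exponent $p$ and class $3$, and $E'/\langle[[x,y],x]\rangle\isom H_{p^3}$, so $E'$ is a \emph{nonsplit} central extension of $\bar Q=H_{p^3}$ by $\dbZ/p$ with total group of exponent $p$, whereas $H^2(H_{p^3})_\dec=0$ (for $p$ odd all cup squares vanish, and $\chi_1\cup\chi_2$ is the inflation of the Heisenberg extension class $x_1\cup x_2$, which is exactly what dies under inflation); so the class of $E'$ is nonzero and indecomposable. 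What saves your particular $E'$ is that it is in addition of class $\leq2$ (being a quotient of $\bar G\times\dbZ/q$), and the corrected statement ``class $\leq2$ and exponent $q$ forces a decomposable class'' is true --- but it is precisely the duality of Corollary \ref{mmm} (equivalently Proposition \ref{kkk}, via the local--global principle of Corollary \ref{local global} and the dual-basis computations of Section \ref{section on free groups}). So this step is not a formality about ``no order increase''; it is the heart of the theorem, and your argument assumes it.

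In the ``if'' direction, the universal claim that $\inf\colon H^*(G/G_{(3)})_\dec\to H^*(G/G^{(3)})_\dec$ is an isomorphism for \emph{every} profinite $G$ is not established beyond degree $2$ (and for $q=p^s$ with $s\geq2$ ``every profinite $G$'' is already too much: one needs $G^{[2]}\isom(\dbZ/q)^I$). Your degree-$2$ argument can be repaired for $p$ odd --- transgressions from $K=G_{(3)}/G^{(3)}$ are Bocksteins via the $q$-power homomorphism $G^{[2]}\to K$, and a Bockstein $\beta_{\bar G}(\chi)$ with $\chi\neq0$ on an exponent-$q$ group can never be decomposable, as one sees by restricting to cyclic subgroups (in effect the paper's local--global principle) --- but both sub-claims are asserted rather than proved in your text. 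More seriously, the passage to higher degrees is, by your own admission, left open: the kernel of inflation for the central extension $1\to K\to\tilde G\to\bar G\to1$ is \emph{not} just the ideal generated by the transgressed classes (higher differentials, e.g.\ Kudo-type transgressions of Bocksteins of classes of $K$, also contribute), and the posited splitting of $H^*(\bar G)$ into decomposables plus a Bockstein ideal is exactly the unproven ``structural heart.'' The paper neither proves nor needs such a statement: it establishes the degree-$2$ isomorphism by duality (Theorem \ref{first main thm}, Proposition \ref{ppp}) and then promotes it to all degrees using the quadraticity of $H^*(G_F)$ from Rost--Voevodsky (Theorem \ref{absolute Galois groups}(iv)) through the auxiliary ring $\widehat{H^*(\cdot)}$ in Corollary \ref{ex quoteints determine cohomology}(1). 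Your outline never invokes quadraticity at this step, and without it there is no route from degree $2$ to all degrees.
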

Theorem A is proved in \S\ref{section on quotients that determine cohomology}.
Conversely, in \S\ref{section quotients determined by cohomology} we prove:

\begin{thmB}
The degree $\leq2$ part of  $H^*(G_F)$ determines $(G_F)_{[3]}$.
\end{thmB}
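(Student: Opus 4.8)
The plan is to reconstruct $W:=(G_F)_{[3]}=G_F/(G_F)_{(3)}$, up to isomorphism, from the datum $(H^1,H^2,\cup,\beta)$, where $H^i:=H^i(G_F,\dbZ/q)$, $\cup$ is the cup product, and $\beta$ is the Bockstein. The starting point is that $W$ is a central extension
\[
1\to Z\to W\to V\to 1,\qquad V=W/W^{(2)}=G_F/(G_F)^{(2)},\quad Z=W^{(2)}=(G_F)^{(2)}/(G_F)_{(3)},
\]
with $Z$ inside the Frattini subgroup of $W$. I first recover $V$: since $(G_F)^{(2)}=G_F^q[G_F,G_F]$ lies in the kernel of every continuous homomorphism $G_F\to\dbZ/q$, one has $H^1=\Hom(V,\dbZ/q)$ as $\dbZ/q$-modules, whence $V\isom\Hom(H^1,\dbZ/q)$ by Pontryagin duality. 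It then remains to recover $Z$ together with the class of the extension; equivalently, since $Z$ is central and Frattini, the five-term inflation--restriction sequence collapses to an injection (the transgression)
\[
0\to\Hom(Z,\dbZ/q)\xrightarrow{\ \trg\ }H^2(V),
\]
whose image is $\Ker\bigl(H^2(V)\to H^2(W)\bigr)$ and classifies the extension. So the whole problem is to compute this kernel from the given cohomological data.

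The image of $\trg$ splits according to the standard decomposition of $H^2(V)$ into its cup-product part and its Bockstein part: a character $\psi\colon Z\to\dbZ/q$ is carried to $\sum_{i<j}\psi\bigl(c(e_i,e_j)\bigr)\,\chi_i\cup\chi_j+\sum_k\psi\bigl(\pi(e_k)\bigr)\,\beta\chi_k$, where $c\colon V\times V\to Z$ is the commutator map, $\pi\colon V\to Z$ is the $q$-th power map, and $\{e_k\}$, $\{\chi_k\}$ are dual bases of $V$ and $H^1$. Thus recovering $\trg$ is the same as recovering $c$ and $\pi$. The commutator half is the clean part: the relevant classes $\chi_i\cup\chi_j$ are decomposable, so by Theorem A the inflation $H^2(W)_\dec\to H^2$ is injective on them, and a cup-product combination vanishes in $H^2(W)$ if and only if it vanishes in $H^2$. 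Hence the cup-product component of $\Img(\trg)$ is exactly the space of combinations $\sum b_{ij}\,\chi_i\cup\chi_j$ that vanish in the given ring $H^2$, which recovers $c$.

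The power half is where the case distinction and the substantive work lie. When $q=p$ is odd one checks that $G_F^q\subseteq(G_F)_{(3)}$, so every generator of $W$ has order $p$, $W$ has exponent $p$, and $\pi=0$; then $\Img(\trg)$ lies entirely in the cup-product part and $\cup$ alone already determines $W$. When $p=2$ one has instead $(G_F)_{(3)}=G_F^{2q}[(G_F)^{(2)},G_F]$, the power map $\pi$ is nonzero, and it is recorded by the Bockstein, which in this case stays within the decomposable classes (for $q=2$, $\beta\chi=\chi\cup\chi$), so the same injectivity furnished by Theorem A applies and $\pi$ is read off the cup-square classes $\chi_k\cup\chi_k$; for $s>1$ the Bockstein contributes genuinely new degree-$2$ classes and must be tracked through the $\dbZ/q$-module structure of $V$. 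In each case one recovers $\pi$, and together with $c$ this reconstitutes $\trg$, hence $Z\isom\Hom(\Img(\trg),\dbZ/q)$ and the extension, hence $W$.

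The main obstacle, and the step demanding the most care, is precisely this power component. The naive computation via the inflation $H^2(V)\to H^2(G_F)$ detects the coarser quotient $(G_F)^{(2)}/[(G_F)^{(2)},G_F]\bigl((G_F)^{(2)}\bigr)^q$ rather than $Z=(G_F)^{(2)}/(G_F)_{(3)}$, the discrepancy being exactly the difference between $\bigl((G_F)^{(2)}\bigr)^q$ and $G_F^q$; one must therefore show that the genuine kernel $\Ker\bigl(H^2(V)\to H^2(W)\bigr)$, and not the larger $\Ker\bigl(H^2(V)\to H^2(G_F)\bigr)$, is what $\cup$ and $\beta$ on $H^2$ compute. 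Keeping the relevant $2$-classes decomposable so that Theorem A can be invoked, handling the nonlinearity of squaring when $p=2$, and controlling the power map for $s>1$ are the points where the argument must be carried out case by case.
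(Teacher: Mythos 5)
Your overall architecture---recover $V=G_F^{[2]}$ from $H^1$, identify $\Img(\trg)=\Ker\bigl(H^2(V)\to H^2(W)\bigr)$ from the given cohomology, and rebuild $W=(G_F)_{[3]}$ from that subgroup---is essentially the paper's, which runs the same scheme through a free cover $S\to G_F$ (Lemma \ref{G3 and dual}, Theorem \ref{second main thm}) rather than through the extension class. For $p$ odd (any $s$) your argument is essentially correct: $G_F^q\leq (G_F)_{(3)}$ by definition, so $\pi=0$, $\Img(\trg)$ lies in the span of the $\chi_i\cup\chi_j$ with $i<j$, and Theorem A upgrades vanishing in $H^2(G_F)$ to vanishing in $H^2(W)$, so the cup product determines the kernel. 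But two steps you merely assert are real content: the formula for $\trg(\psi)$ in terms of $c$ and $\pi$ is the computation of Lemmas \ref{lemma on cup products}, \ref{lemma on Bocksteins} and Proposition \ref{cup products}; and the closing claim that $\Img(\trg)$ ``classifies the extension, hence $W$'' is not automatic---a priori two non-isomorphic Frattini central extensions could have the same transgression image---and is settled in the paper by exhibiting $W$ as the quotient of the universal object $S/S_{(3)}$ by the annihilator of the kernel under the perfect duality of Corollary \ref{mmm}.

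The genuine gap is the case $p=2$, $q=2^s$ with $s\geq 2$, which you defer (``must be tracked\dots'', ``carried out case by case'') rather than solve---and that is exactly where the difficulty of Theorem B sits. Two distinct problems arise there. First, your plan reads $\pi$ off the Bockstein, but $\beta$ is \emph{not} part of the data: the degree $\leq 2$ part of the ring $H^*(G_F)$ is only $(H^1,H^2,\cup)$, so an argument from $(H^1,H^2,\cup,\beta)$ proves a strictly weaker statement; what that data recovers is the finer quotient $G_F^{[3]}$ (Corollary \ref{ex cohomology determines quotients}(2)), from which one would still have to descend to $(G_F)_{[3]}$ by dividing by $2q$-th powers. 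Second, even granting $\beta$, your separation into a ``commutator half'' and a ``power half'' breaks when $\pi\neq 0$: the part of $\Img(\trg)$ visible in $\mathrm{span}\{\chi_i\cup\chi_j\}_{i<j}$ via Theorem A consists only of $\trg(\psi)$ for characters $\psi$ vanishing on the powers $\pi(e_k)$, so this locus recovers $c$ only modulo the subgroup they generate, and inflation of the Bockstein classes to $H^2(G_F)$ overshoots (its kernel is dual to $G_F^{(2)}/(G_F)^{(3)}$, not to $G_F^{(2)}/(G_F)_{(3)}$). The paper's resolution of $p=2$ is the actual point of the theorem: the factor $\del=2$ in the definition of $G_{(3)}$ is calibrated so that the \emph{full} decomposable part $H^2(G^{[2]})_\dec$---which for $p=2$ contains $\chi\cup\chi=(q/2)\beta(\chi)$ (Proposition \ref{identity})---is perfectly dual to $(G^{(2)},G_{(3)})$ (Proposition \ref{kkk}, Corollary \ref{mmm}). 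That duality is why the cup product alone suffices for every $q$, with no case distinction and no Bockstein; your proof, as structured, does not reach this.
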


We also prove the following strengthening of \cite{CheboluEfratMinac}*{Th.\ C, D}.
Denote the maximal pro-$p$ Galois group of $F$ by $G_F(p)$.

\begin{thmC}
Let $F_1,F_2$ be fields containing a $q$th root of unity.
Let $\pi\colon G_{F_1}(p)\to G_{F_2}(p)$ be a continuous homomorphism and
$\pi^*\colon H^*(G_{F_2}(p))\to H^*(G_{F_1}(p))$, $\pi_{[3]}\colon (G_{F_1})_{[3]}\to (G_{F_2})_{[3]}$
the induced maps.
Then $\pi$ is an isomorphism if and only if $\pi_{[3]}$ is an isomorphism if and only if $\pi^*$ is an isomorphism.
\end{thmC}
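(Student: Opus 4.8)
The plan is to prove the equivalences by running the cyclic chain of implications: $\pi$ an isomorphism $\Rightarrow$ $\pi_{[3]}$ an isomorphism $\Rightarrow$ $\pi^*$ an isomorphism $\Rightarrow$ $\pi$ an isomorphism. Since these three implications form a cycle through all three assertions, together they yield the full equivalence. At the outset I would record two reductions: for a field $F$ containing a root of unity of order $q$, inflation gives an isomorphism $H^*(G_F(p))\isom H^*(G_F)$ with $H^*(G_F(p))$ generated in degree $1$; and, since the kernel of $G_F\to G_F(p)$ has pro-$p$ quotient and hence lies in $(G_F)_{(3)}$, one has $(G_F)_{[3]}=(G_F(p))_{[3]}$. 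With these identifications Theorems~A and~B apply verbatim to $G_F(p)$.

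The first implication is formal: $G\mapsto G/G_{(3)}$ is a functor, because $G_{(3)}$ is generated by $q$-th (resp.\ $2q$-th) powers and by commutators drawn from $G^{(2)}$, all of which a continuous homomorphism carries into the analogous generators; hence an isomorphism $\pi$ descends to an isomorphism $\pi_{[3]}$. For the second implication, $\pi_{[3]}$ induces an isomorphism $\pi_{[3]}^*$ on the cohomology of the quotients, and thus on the decomposable subrings $H^*(\cdot)_{\dec}$. By Theorem~A the inflation maps $H^*((G_{F_i})_{[3]})_{\dec}\to H^*(G_{F_i}(p))$ are isomorphisms, and they are natural with respect to the compatible projections $G_{F_i}(p)\to(G_{F_i})_{[3]}$. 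Chasing the resulting commutative square identifies $\pi^*$ with the restriction of $\pi_{[3]}^*$ to decomposables, so $\pi^*$ is an isomorphism.

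The substantive implication is the last one, which I would split into surjectivity and injectivity of $\pi$. For surjectivity, injectivity of $\pi^*$ in degree $1$ says that no nonzero homomorphism $G_{F_2}(p)\to\dbZ/q$ kills $\Img\pi$; as any proper closed subgroup of a pro-$p$ group lies in the kernel of some surjection onto $\dbZ/p\subseteq\dbZ/q$, this forces $\Img\pi=G_{F_2}(p)$. For injectivity, put $N=\Ker\pi$ and feed $1\to N\to G_{F_1}(p)\xrightarrow{\pi}G_{F_2}(p)\to1$ into the five-term inflation--restriction sequence
\begin{equation*}
0\to H^1(G_{F_2}(p))\xrightarrow{\inf} H^1(G_{F_1}(p))\xrightarrow{\res} H^1(N)^{G_{F_2}(p)}\xrightarrow{\trg} H^2(G_{F_2}(p))\xrightarrow{\inf} H^2(G_{F_1}(p)).
\end{equation*}
Here $\pi^*=\inf$. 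Surjectivity of $\inf$ in degree $1$ makes $\res=0$, hence $\trg$ injective; injectivity of $\inf$ in degree $2$ makes the image of $\trg$ zero, hence $\trg=0$. Therefore $H^1(N)^{G_{F_2}(p)}=0$, and the coefficient inclusion $\dbZ/p\hookrightarrow\dbZ/q$ embeds $H^1(N,\dbF_p)$ into $H^1(N)$, whence $H^1(N,\dbF_p)^{G_{F_1}(p)}=0$ (the conjugation action factors through $G_{F_2}(p)$, so the two invariant groups agree).

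I expect the conversion of this vanishing into $N=1$ to be the crux. Writing $G=G_{F_1}(p)$ and letting $M=N/\overline{[N,N]N^p}$ be the mod-$p$ abelianization of $N$, viewed as a profinite $\dbF_p[[G]]$-module, there is a natural isomorphism $H^1(N,\dbF_p)^{G}\isom(M_G)^\vee$ with $M_G=N/\overline{[G,N]N^p}$ the group of $G$-coinvariants; the vanishing thus gives $M_G=0$. Since $\dbF_p[[G]]$ is a local profinite ring with the augmentation ideal as its unique maximal ideal, the topological Nakayama lemma upgrades $M_G=0$ to $M=0$, and as $N$ is pro-$p$ this Frattini condition forces $N=1$, so $\pi$ is injective and hence an isomorphism. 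The two points demanding care are the naturality of the Theorem~A inflation isomorphisms used in the second implication, and the validity of Nakayama's lemma over $\dbF_p[[G]]$ for an $N$ that need not be topologically finitely generated.
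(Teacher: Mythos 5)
Your proof is correct, and its route genuinely differs from the paper's, so let me compare. The paper establishes the two equivalences separately and entirely inside its duality-triple machinery: $\pi$ iso $\Leftrightarrow$ $\pi_{[3]}$ iso is Corollary \ref{first equivalnce in thmC} (via Proposition \ref{pro-p}, whose group-theoretic engine is the cited criterion of Serre \cite{Serre65}*{Lemma 2} and whose cohomological input, the injectivity of $\pi^*_2$, comes from Proposition \ref{ppp}), while $\pi_{[3]}$ iso $\Leftrightarrow$ $\pi^*$ iso is Proposition \ref{condition e}. You instead close a cycle of three implications. Your middle implication replaces the duality-triple argument by a direct appeal to Theorem A plus naturality of inflation --- a genuine shortcut, and a legitimate one, since Theorem A (Corollary \ref{ex quoteints determine cohomology}(1)) is proved independently of Theorem C. Your final implication is in effect an inline proof of (a non-surjective strengthening of) Serre's criterion, which the paper only cites: Frattini-type surjectivity from injectivity of $\pi^*$ on $H^1$, then the five-term sequence plus topological Nakayama. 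Both of the worries you flag are harmless: the inflation isomorphisms of Theorem A are natural, so your square commutes; and the Nakayama step needs no finite generation, since for $1\neq N\trianglelefteq G$ with $G$ pro-$p$ the module $H^1(N,\dbF_p)$ is a nonzero discrete $p$-primary $G/N$-module, each of whose elements lies in a finite submodule, and a finite $p$-group acting on a nonzero finite abelian $p$-group has nonzero fixed points; equivalently, topological Nakayama holds for arbitrary compact $\dbF_p[[G]]$-modules (alternatively, once surjectivity is known you could simply invoke \cite{Serre65}*{Lemma 2}). As for what each approach buys: yours is more self-contained on the group-theoretic side and handles explicitly the passage from a mere continuous homomorphism to an epimorphism, which the paper's quoted results presuppose; the paper's stays axiomatized, so the identical propositions also yield analogues of Theorem C for the other duality triples of Examples \ref{examples of cohomological duality triples}, and it keeps the equivalence ``$\pi$ iso $\Leftrightarrow$ $\pi_{[3]}$ iso'' dependent only on the degree-$2$ (Merkurjev--Suslin) part of the Galois symbol, whereas your cyclic structure forces even that equivalence through Theorem A and hence through the full Rost--Voevodsky theorem.
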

The proof of Theorem C is given in \S\ref{section on isomorphisms}.
Theorem C is in the spirit of the ``anabelian phenomena" in Galois theory \cite{NeukirchSchmidtWingberg}*{Ch.\ XII, \SS2--3}, as it shows that certain isomorphisms of (small) Galois groups already imply that the much richer structures of the maximal pro-$p$ Galois groups are isomorphic.

Regarding the structure of the quotient $(G_F)_{[3]}$, we prove that it is the Galois group of the
compositum of all Galois extensions of $F$ with certain specific Galois groups:

\begin{thmD}
Assume that $q=p\neq2$ is prime and let $F$ be as above.
Then $(G_F)_{(3)}$ is the intersection of all normal open subgroups $N$ of $G_F$
such that $G_F/N$ is isomorphic to $\dbZ/p$ or $H_{p^3}$.
\end{thmD}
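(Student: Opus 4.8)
The statement asserts the equality of two closed normal subgroups of $G_F$, so the plan is to prove the two inclusions separately. Throughout set $G=G_F$ and $\overline G=G/G_{(3)}$. Since $\dbZ/p$ and $H_{p^3}$ are pro-$p$ groups of exponent $p$, every subgroup $N$ occurring in the statement contains $\Ker(G_F\to G_F(p))$, so we may work inside $\overline G$, which by the definition of $G_{(3)}$ (with $q=p$, $p\neq2$) has exponent $p$ and nilpotency class $\leq2$. Write $V=\overline G^{\,\ab}$ and $W=[\overline G,\overline G]=G^{(2)}/G_{(3)}$; the latter is central in $\overline G$, and the commutator map induces a surjective alternating $\dbF_p$-bilinear form $\omega\colon\wedge^2V\to W$.

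For the inclusion $G_{(3)}\leq N$ it suffices to observe that every $Q\in\{\dbZ/p,H_{p^3}\}$ satisfies $Q_{(3)}=1$: indeed $Q^p=1$, and $[Q^{(2)},Q]=[Z(Q),Q]=1$ because $Q^{(2)}=[Q,Q]$ is central. Hence for a surjection $\phi\colon G_F\to Q$ one has $\phi(G_{(3)})\leq Q_{(3)}=1$, i.e.\ $G_{(3)}\leq\Ker\phi=N$. This gives $G_{(3)}\leq M$, where $M$ denotes the intersection in the statement.

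For the reverse inclusion I must show that the $\dbZ/p$- and $H_{p^3}$-quotients of $\overline G$ separate points, equivalently that $M/G_{(3)}=1$. Given $1\neq\overline x\in\overline G$, if $\overline x\notin W$ its image in $V$ is nonzero and is detected by a homomorphism $\overline G\to\dbZ/p$. So assume $1\neq\overline x\in W$ and write $\overline x=\omega(\zeta)$ with $\zeta\in\wedge^2V$. I would then use the description of $H_{p^3}$ as the central extension $1\to\dbZ/p\to H_{p^3}\to(\dbZ/p)^2\to1$ whose class is $e_1^*\cup e_2^*$: for $\chi_1,\chi_2\in H^1(G_F)=V^*$, the pair $(\chi_1,\chi_2)$ lifts to a homomorphism $G_F\to H_{p^3}$ (which is surjective as soon as $\chi_1,\chi_2$ are independent) if and only if the obstruction $\chi_1\cup\chi_2\in H^2(G_F)$ vanishes; here $p\neq2$ guarantees that no Bockstein term intervenes. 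For such a lift $\psi$ one computes $\psi(\overline x)=(\chi_1\wedge\chi_2)(\zeta)$ under the natural pairing $\wedge^2V^*\times\wedge^2V\to\dbF_p$. Thus an $H_{p^3}$-quotient detecting $\overline x$ is exactly a decomposable element $\chi_1\wedge\chi_2\in\Ker(\cup)$ with $(\chi_1\wedge\chi_2)(\zeta)\neq0$.

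By Theorem~A the inflation $H^*(\overline G)_\dec\to H^*(G_F)$ is an isomorphism and, since $\overline G$ has exponent $p$, the five-term exact sequence of $1\to W\to\overline G\to V\to1$ has transgression landing in the alternating part $\wedge^2V^*$ and identifies $\Ker\bigl(\cup\colon\wedge^2V^*\to H^2(G_F)\bigr)$ with $\Img(\omega^*)=(\Ker\omega)^\perp$. Under this identification the detection condition becomes the existence of a decomposable element of $\Img(\omega^*)$ pairing nontrivially with $\zeta$. The crux — and the step I expect to be the main obstacle — is therefore the purely cohomological claim that $\Ker(\cup)$ is spanned by its decomposable elements $\chi\wedge\chi'$ (equivalently, that the quadratic relations of $H^*(G_F)$ are generated by symbol relations $\chi\cup\chi'=0$). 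Granting this, if every decomposable element of $\Img(\omega^*)$ annihilated $\zeta$, then all of $\Img(\omega^*)=(\Ker\omega)^\perp$ would annihilate $\zeta$, forcing $\overline x=\omega(\zeta)=0$, a contradiction; hence a separating $H_{p^3}$-quotient exists. This spanning property is precisely where the arithmetic of Galois cohomology (the Bloch--Kato/norm-residue structure, which makes $H^*(G_F)$ a quadratic algebra with symbol-generated relations) must be invoked, in contrast to an arbitrary pro-$p$ group, and it is the only non-formal ingredient of the argument.
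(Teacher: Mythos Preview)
Your argument is correct and identifies precisely the same arithmetic input that the paper uses: the kernel of $\cup\colon H^2(G^{[2]})_\dec\to H^2(G_F)$ is generated by cup products $\chi\cup\chi'$ that individually vanish (Lemma~\ref{ker of inflation}, equivalently condition~(iii) of ``Galois relation type''), which follows from the injectivity of the norm residue map in degree~$2$. Your identification $\Ker(\cup)=\Img(\omega^*)$ is exactly the duality of Corollary~\ref{mmm} (you route it through Theorem~A; the paper establishes it directly via Proposition~\ref{kkk}), and your computation $\psi(\overline{x})=(\chi_1\wedge\chi_2)(\zeta)$ is the content of Lemma~\ref{lemma on cup products}(b).

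Where you differ from the paper is in packaging. The paper proves a general Theorem~\ref{fourth main thm}: for any cohomological duality triple $(T,T_0,\alp)$ and any \emph{special set} $\Sigma$, one has $T_0(G)=T(G)\cap\bigcap\{M\trianglelefteq G: G/M\in\calL(G)\}$, where $\calL(G)$ is the list of middle groups of the central extensions attached to $\Sigma$. Theorem~D then falls out by taking the triple of Example~\ref{examples of cohomological duality triples}(1), using Lemma~\ref{ker of inflation} to build the special set (Example~\ref{examples of special sets}(1)), and recognising the resulting extensions as $H_{p^3}$ (Example~\ref{examples of L(G)}(1)). Your approach bypasses this abstraction: you work directly with the commutator map $\omega\colon\wedge^2V\to W$ on $\overline{G}=G_{[3]}$ and argue by Pontryagin duality that decomposable elements of $(\Ker\omega)^\perp$ separate points of $W$. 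This is more transparent for the specific case at hand and makes the role of the quadratic-relations hypothesis very explicit; the paper's route, in exchange, yields the companion results for $G^{(3)}$ (with $M_{p^3},\dbZ/p^2$) and $G^{q^2}[G,G]$ (with $\dbZ/p^2$) with no extra work.

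One small point worth tightening: when you write $\overline{x}=\omega(\zeta)$ and pair with $\chi_1\wedge\chi_2$, the value depends on the choice of $\zeta$ only modulo $\Ker\omega$; this is harmless precisely because $\chi_1\wedge\chi_2\in(\Ker\omega)^\perp$, but it is cleaner to phrase the detection condition as $(\omega^*)^{-1}(\chi_1\wedge\chi_2)(\overline{x})\neq0$, which is well defined since $\omega^*$ is injective.
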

Here $H_{p^3}$ is the non-abelian group of odd order $p^3$ and exponent $p$ (the \textsl{Heisenberg group}):
\[
H_{p^3}=\bigl\langle  r,s,t\ \bigm|\  r^p=s^p=t^p=[r,t]=[s,t]=1,\ [r,s]=t \bigr\rangle.
\]
In the remaining case $q=p=2$ the group $(G_F)_{(3)}$ is known to be the
intersection of all normal open subgroups $N$ of $G_F$
such that $G/N$ is isomorphic to either $\dbZ/2$, $\dbZ/4$, or the dihedral group $D_4$ of order $8$
(Villegas \cite{Villegas88}, Min\'a\v c--Spira \cite{MinacSpira96}*{Cor.\ 2.18};
see also \cite{EfratMinac11}*{Cor.\ 11.3 and Prop.\ 3.2}).
Moreover, $\dbZ/2$ can be omitted from this list unless $F$ is Euclidean \cite{EfratMinac11}*{Cor.\ 11.4}.
The proof of Theorem D is given in Example \ref{tyty}(1).

Theorem A gives a new restriction on the structure of maximal pro-$p$ Galois groups of fields as above.
Indeed, it implies that if the defining relations in such a group $G=G_F(p)$ are changed within $G_{(3)}$ and the cohomology ring is changed, then the resulting group may not be realized as Galois group in this way
(see Corollary \ref{only one Galois group} for a precise statement).
In particular, Theorem A directly implies the classical Artin--Schreier theorem, asserting that
elements of absolute Galois groups can have only order $1$, $2$, or $\infty$ (Remark \ref{examples}(2)).

Our proofs of Theorems A, C, D are based on the bijectivity of the Galois symbol homomorphism
$K^M_*(F)/qK^M_*(F)\to H^*(G_F)$, proved by Rost and Voevodsky (with a patch by Weibel),
where $K^M_*(F)$ is the Milnor $K$-ring of $F$;
see  \cite{Voevodsky03a}, \cite{Voevodsky11}, \cite{Weibel09}, \cite{Weibel08}.
The bijectivity of the Galois symbol in degree $2$ was proved earlier by Merkurjev and Suslin \cite{MerkurjevSuslin82}.
Specifically, the proofs of Theorems A and the second equivalence in Theorem C use the bijectivity of this map,
the proof of the first equivalence in Theorem C uses its bijectivity in degree $2$,
and that of Theorem D uses only its injectivity in degree $2$.

Our approach is purely group-theoretic, and is based on a fundamental notion of duality between a pair $(T,T_0)$
of normal subgroups of a profinite group $G$
and a subgroup $A$ of the second cohomology $H^2(G/T)$ (see \S\ref{section on duality}).
When applied to $T=G^{(2)},T_0=G_{(3)}$ and $A=H^2(G^{[2]})_\dec$, with $G=G_F$ as above, it leads to Theorems A--D.
Moreover, it can also be applied to other choices of $T,T_0,A$ as well to yield analogous results.
Notably, taking $T=G^{(2)},T_0=G^{(3)}$ and $A=H^2(G^{[2]})$, we strengthen the main results of \cite{CheboluEfratMinac}.
As a third example we may take $T=G^{(2)},T_0=G^{q^2}[G,G]$ and $A=\Img(\beta_{G^{[2]}})$,
where $\beta_{G^{[2]}}\colon H^1(G^{[2]})\to H^2(G^{[2]})$ is the Bockstein map (see \S\ref{section on cohomology}).

The Galois group $(G_F)_{[3]}$ encodes important arithmetical information on $F$.
For instance, when $q=2$ it was shown in \cite{MinacSpira90}, \cite{MinacSpira96}
that $(G_F)_{[3]}$ and the Kummer element of $-1$
encode the orderings on $F$ as well as the Witt ring of quadratic forms over $F$.
Moreover, in \cite{EfratMinac12} we use Theorem D to prove that  $(G_F)_{[3]}$ also encodes
a class of valuations which are important in the pro-$p$ context,
namely, (Krull) valuations $v$ whose value group is not divisible by $p$ and
whose $1$-units are $p$-powers (this is a weak form of Hensel's lemma).
Specifically, under a finiteness assumption, and assuming the existence of
a root of unity of order $4$ in $F$ when $p=2$,
there exists such a valuation if and only if the center $Z((G_F)_{[3]})$ has a nontrivial image in $G_F^{[2]}$.

From a more general perspective, recovering arithmetical information on a field from the profinite group-structure of its absolute Galois group, and even of smaller canonical Galois groups, is the main theme of the birational anabelian geometry.
See for instance the recent informative review \cite{BogomolovTschinkel12}, as well as \cite{Pop12}, \cite{Topaz12} and the introduction of \cite{MinacSwallowTopaz14}.
Typically such results are limited to various geometric situations.
The fact that considerable crucial arithmetical information on valuations and orderings can be recovered even from the small quotient $(G_F)_{[3]}$ (with the only assumption about having a root of unity) is a surprising consequence of this work.

The first version of this work was posted to the archive arXiv in 2011.
Our goal was to develop such techniques which would be adaptable for various other higher $p$-descending filtrations of absolute Galois groups.
Since then there have indeed been various related subsequent works on the $p$-Zassenhaus filtration $G_{(n)}$ of a profinite group $G$,
Massey products in Galois cohomology (which generalize the cup product), and duality principles between them of this nature.
In \cite{Efrat14a}*{Th.\ B} it was shown that when $G$ is a free profinite group, then the subgroup $A$ of $H^2(G/G_{(n)})$ generated by all $n$-fold Massey products is dual to $(G_{(n)},G_{(n+1)})$, in the sense of \S3.
Further, let $\dbU_n(\dbF_p)$ be the group of all $n\times n$ upper-triangular unipotent matrices over $\dbF_p$.
It was observed  in \cite{Efrat14a} that Theorem D above can be rephrased as saying that for $G=G_F$ and $q=p$ a prime number, one has $G_{(3)}=\bigcap\Ker(\rho)$, where the intersection is over all continuous homomorphisms $\rho\colon G\to\dbU_3(\dbF_p)$.
It was shown that for every $n\geq3$ profinite groups $G$ satisfying a certain cohomological condition on $(n-1)$-fold Massey products, one has
\begin{equation}
\label{unipotent intersection}
G_{(n)}=\bigcap\{\Ker(\rho)\ |\ \rho\colon G\to \dbU_n(\dbF_p)\}.
\end{equation}
In particular, this holds when $G$ is free pro-$p$ (see also \cite{Efrat14b} and \cite{MinacTan15b}*{Th.\ 2.6} for direct alternative proofs).
Then, in \cite{MinacTan15a}*{\S8}  and \cite{MinacTan15b},  it was conjectured that the equality (\ref{unipotent intersection}) holds when $G=G_F$ is an absolute Galois group of a field $F$ containing a root of unity of order $p$ (the {\sl ``kernel $n$-unipotent conjecture"}).
It was shown in \cite{MinacTan15b} that (\ref{unipotent intersection}) holds for profinite groups related to local fields: odd rigid groups, and Demu\v skin groups.

We thank the referee for his/her thoughtful comments, which helped us to improve the paper at several points.

\section{Preliminaries}
\label{section on cohomology}
\subsection*{A) Profinite groups}
We work in the category of profinite groups.
Thus all homomorphisms of profinite groups will be tacitly assumed to be continuous and all subgroups will
be closed.
The \textbf{descending $q$-central filtration} $G^{(i)}=G^{(i,q)}$ of a profinite group $G$ is defined inductively by
\[
G^{(1)}=G,\quad  G^{(i)}=(G^{(i-1)})^q[G^{(i-1)},G],\ i\geq2.
\]
Thus $G^{(i)}$ is generated by all $q$th powers of elements of $G^{(i-1)}$ and all commutators $[h,g]=h\inv g\inv hg$,
where $h\in G^{(i-1)}$, $g\in G$.
Set $G^{[i]}=G/G^{(i)}$.
We also define
\begin{equation}
\label{delta}
\del=\begin{cases}
1 & p>2\\
2& p=2,
\end{cases}
\qquad
G_{(3)}=G^{\del q}[G^{(2)},G],
\qquad
G_{[3]}=G/G_{(3)}.
\end{equation}

\begin{rems}
\label{remark on the def of G3}
\rm
(1)\quad
As $[h,g]=h^{-2}(hg^{-1})^2g^2$, we have $[G,G]\leq G^2$.
Therefore, when $q=2$ we have $G^{(2)}=G^2$, so $G^{(3)}=G^4[G^{(2)},G]=G_{(3)}$.

\medskip

(2)\quad
One has $G^{(3)}\leq G_{(3)}$.
Indeed, this is immediate when $p>2$.
When $p=2$ we have $g^{q^2}=(g^{q/2})^{2q}$ for every $g\in G$,
so $(G^q)^q\leq G^{2q}[G^q,G]$.
Further, one has the identities (see \cite{Labute66}*{Prop.\ 5} and its proof)
\[
(g_1g_2)^q\equiv g_1^qg_2^q[g_2,g_1]^{\left({q\atop2}\right)}, \quad
[g_1g_2,h]\equiv[g_1,h][g_2,h] \pmod{[[G,G],G]}
\]
It follows that $[g_1^2,g_2^2]^{\left({q\atop2}\right)}\in G^{2q}[[G,G],G]$ for any $g_1,g_2\in G$.
By (1), $[G,G]^q\leq(G^2)^q\leq G^{2q}[[G,G],G]$.
Consequently, $(G^{(2)})^q\leq G^{2q}[G^{(2)},G]=G_{(3)}$, whence our claim.
\end{rems}

\subsection*{B) Profinite cohomology}
We refer to \cite{NeukirchSchmidtWingberg}, \cite{RibesZalesskii}, or \cite{SerreCG}
for basic notions and facts in profinite cohomology.
In particular, given a profinite group $G$, let $H^i(G)=H^i(G,\dbZ/q)$ be the $i$th profinite cohomology group of $G$
with respect to its trivial action on $\dbZ/q$.
Thus $H^1(G)=\Hom_{\textrm{cont}}(G,\dbZ/q)$.
Let $H^*(G)=\bigoplus_{r=0}^\infty H^r(G)$ be the cohomology ring with the cup product $\cup$.
Given a homomorphism $\pi\colon G_1\to G_2$ of profinite groups,
let $\pi^*_r\colon H^r(G_2)\to H^r(G_1)$ and $\pi^*\colon H^*(G_2)\to H^*(G_1)$ be the induced homomorphisms.
We write $\res,\inf,\trg$ for the restriction, inflation, and transgression maps, respectively.
For a normal subgroup $N$ of $G$, there is a canonical action of $G$ on $H^i(N)$.
When $i=1$ it is given by $\psi\mapsto\psi^g$, where $\psi^g(n)=\psi(g\inv ng)$
for $g\in G$ and $n\in N$.
We denote the group of all $G$-invariant elements of $H^i(N)$ by $H^i(N)^G$.

For each $r\geq0$, the cup product induces a homomorphism $H^1(G)^{\tensor r}\to H^r(G)$ of $\dbZ/q$-modules.
Let $H^r(G)_\dec$ be its image (where $H^0(G)_\dec=\dbZ/q$),
 and let $H^*(G)_\dec=\bigoplus_{r=0}^\infty H^r(G)_\dec$ be
the \textbf{decomposable cohomology ring} with the cup product.

Following \cite{CheboluEfratMinac}*{\S3}, set $\widehat{H^*(G)}=\bigoplus_{r=0}^\infty H^1(G)^{\tensor r}/C_r$,
where $C_r$ is the subgroup of $H^1(G)^{\tensor r}$ generated by all elements
$\psi_1\tensor\cdots\tensor \psi_r$ such
that $\psi_i\cup\psi_j=0$ for some $i<j$ .
It is a graded ring and there is a canonical graded ring epimorphism $\omega_G\colon \widehat{H^*(G)}\to H^*(G)_\dec$.
The ring $\widehat{H^*(G)}$ and the map $\omega_G$ are functorial in the natural sense.
We call $H^*(G)$ \textbf{quadratic} (resp., \textbf{$r$-quadratic}) if $\omega_G$ is an isomorphism
(resp., in degree $r$).

\begin{lem}
\label{ker of inflation}
When $H^*(G)$ is $2$-quadratic, the kernel of \/
$\inf_G\colon H^2(G^{[2]})_\dec\to H^2(G)$ is generated by elements of the form $\psi\cup\psi'$,
with $\psi,\psi'\in H^1(G^{[2]})$ such that $\inf_G(\psi\cup\psi')=0$.
\end{lem}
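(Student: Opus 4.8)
The plan is to reduce everything to degree $1$, where inflation is an isomorphism, and then to use the $2$-quadraticity of $H^*(G)$ in the precise form that $\omega_G$ is \emph{injective} in degree $2$. Write $\iota\colon H^1(G^{[2]})\to H^1(G)$ for the degree-$1$ inflation map; it is an isomorphism because every homomorphism $G\to\dbZ/q$ kills $G^q[G,G]=G^{(2)}$. Since inflation is a ring homomorphism, $\inf(\psi\cup\psi')=\iota(\psi)\cup\iota(\psi')$ for $\psi,\psi'\in H^1(G^{[2]})$; in particular $\inf$ carries $H^2(G^{[2]})_\dec$ into $H^2(G)_\dec$, so the kernel in question is unaffected if we regard $\inf_G$ as a map into $H^2(G)_\dec\subseteq H^2(G)$.

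First I would take an arbitrary $x\in H^2(G^{[2]})_\dec$ with $\inf_G(x)=0$ and, by definition of the decomposable part, write it as a finite sum $x=\sum_k\psi_k\cup\psi'_k$ with $\psi_k,\psi'_k\in H^1(G^{[2]})$. Applying $\inf_G$ gives $\sum_k\iota(\psi_k)\cup\iota(\psi'_k)=0$ in $H^2(G)_\dec$. This says exactly that $\omega_G$ sends the class of $\sum_k\iota(\psi_k)\tensor\iota(\psi'_k)$ to $0$, so by injectivity of $\omega_G$ the tensor $\sum_k\iota(\psi_k)\tensor\iota(\psi'_k)$ lies in the subgroup $C_2$ of $H^1(G)^{\tensor2}$. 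I would then unwind the definition of $C_2$: it is generated by pure tensors $\phi\tensor\phi'$ with $\phi\cup\phi'=0$, and since an integer coefficient can be absorbed into either factor (and does not disturb the vanishing of the cup product), one may write $\sum_k\iota(\psi_k)\tensor\iota(\psi'_k)=\sum_l\phi_l\tensor\phi'_l$ with each $\phi_l\cup\phi'_l=0$.

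Transporting this identity back through $\iota\inv\tensor\iota\inv$ yields $\sum_k\psi_k\tensor\psi'_k=\sum_l\iota\inv(\phi_l)\tensor\iota\inv(\phi'_l)$ in $H^1(G^{[2]})^{\tensor2}$, and applying the cup-product map gives $x=\sum_l\iota\inv(\phi_l)\cup\iota\inv(\phi'_l)$. Each summand is a kernel element of the required shape, since $\iota\inv(\phi_l),\iota\inv(\phi'_l)\in H^1(G^{[2]})$ and $\inf_G(\iota\inv(\phi_l)\cup\iota\inv(\phi'_l))=\phi_l\cup\phi'_l=0$. Thus $x$ is a sum of kernel elements of the form $\psi\cup\psi'$, as desired.

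The only real subtlety, and the place where the hypothesis is genuinely used, is the passage from the vanishing of the \emph{total} sum of cup products to membership in $C_2$ of the corresponding tensor: this is exactly the injectivity half of $2$-quadraticity. Without it one could only conclude that $x$ lies in the kernel, not that it is generated by \emph{decomposed} kernel elements $\psi\cup\psi'$ rather than by more complicated combinations. Everything else is bookkeeping with the degree-$1$ inflation isomorphism $\iota$ and the compatibility of cup products with inflation.
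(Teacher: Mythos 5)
Your proof is correct and is essentially the paper's own argument: the paper compresses exactly this reasoning into a commutative diagram whose top row is the degree-$1$ inflation isomorphism followed by the quotient $H^1(G)^{\tensor2}\to H^1(G)^{\tensor2}/C_2$, with the surjective cup product on the left and the injective (by $2$-quadraticity) induced cup product on the right, and your element-wise chase—lift $x$ to a tensor, push into $C_2$ by injectivity of $\omega_G$, rewrite as pure tensors with vanishing cup products, and pull back through $\iota\inv$—is precisely that diagram chase written out. No gaps; the point you isolate as the crux (injectivity of $\omega_G$ in degree $2$ forcing membership in $C_2$) is indeed where the hypothesis enters in the paper's proof as well.
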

\begin{proof}
This follows from the commutative diagram
\[
\xymatrix{
H^1(G^{[2]})^{\tensor2}\ar[r]^{\inf_G}_{\sim}\ar@{->>}[d]_{\cup} & H^1(G)^{\tensor2}\ar[r] &
H^1(G)^{\tensor2}/C_2\ar@{>->}[d]^{\cup}  \\
H^2(G^{[2]})_\dec\ar[rr]^{\inf} && H^2(G)_\dec
}
\]
and the definition of $C_2$.
\end{proof}

The \textbf{Bockstein map} $\beta_G\colon H^1(G)\to H^2(G)$ is the connecting homomorphism arising from
the short exact sequence of trivial $G$-modules
\[
0\to\dbZ/q\to\dbZ/q^2\to\dbZ/q\to0.
\]
It is functorial in the natural way.
The following lemma was proved in \cite{EfratMinac11}*{Cor.\ 2.11} for $I$ finite, and follows in general by a limit argument.
See also Proposition \ref{cup products} for an alternative proof.

\begin{lem}
\label{H2 for elementary abelian groups}
If $G=(\dbZ/q)^I$ for some $I$, then $H^2(G)=\Img(\beta_G)+H^2(G)_\dec$.
\end{lem}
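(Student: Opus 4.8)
The plan is to reduce to a finite index set and then compute $H^2$ via the Künneth theorem, exploiting that over the base ring $\dbZ/q$ the cohomology of each cyclic factor is free in every degree, so no correction terms appear.

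First I would reduce to $I$ finite. Writing $G=(\dbZ/q)^I=\varprojlim_J(\dbZ/q)^J$ over the finite subsets $J\subseteq I$, continuous cohomology gives $H^n(G)=\varinjlim_J H^n((\dbZ/q)^J)$ with inflation maps as transitions. Since $\inf$ commutes with the Bockstein (naturality of $\beta$) and with cup products (hence carries decomposables to decomposables), any $\alpha\in H^2(G)$ lifts to $\alpha_J\in H^2((\dbZ/q)^J)$ for some finite $J$, and a decomposition $\alpha_J=\beta(\gamma)+\delta$ at the finite level inflates to one for $\alpha$. Thus it suffices to treat $G=(\dbZ/q)^n$.

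The structural input I would establish next is that for a single cyclic factor $H^i(\dbZ/q)\isom\dbZ/q$ for all $i\geq0$ and the Bockstein $\beta\colon H^1(\dbZ/q)\to H^2(\dbZ/q)$ is an isomorphism. The latter follows from the long exact sequence of $0\to\dbZ/q\to\dbZ/q^2\to\dbZ/q\to0$: the reduction map $H^1(\dbZ/q,\dbZ/q^2)\to H^1(\dbZ/q,\dbZ/q)$ vanishes, forcing $\beta$ to be an injective, hence bijective, endomorphism of $\dbZ/q$. In particular every $H^i(\dbZ/q)$ is a free $\dbZ/q$-module, so the Künneth correction terms vanish and the cross-product map is a graded-ring isomorphism $H^*((\dbZ/q)^n)\isom H^*(\dbZ/q)^{\otimes n}$ over $\dbZ/q$. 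Reading off degree $2$, with $x_1\nek x_n$ the standard basis of $H^1(G)$ and $y_i=\beta(x_i)$, this identifies $H^2(G)$ with $\bigoplus_i\dbZ/q\,y_i\oplus\bigoplus_{i<j}\dbZ/q\,(x_i\cup x_j)$. Each $y_i$ is the inflation of the degree-$2$ generator of the $i$-th factor, which by the bijectivity of $\beta$ on that factor equals $\beta(x_i)\in\Img(\beta_G)$, while each $x_i\cup x_j$ lies in $H^2(G)_\dec$. Hence $H^2(G)\subseteq\Img(\beta_G)+H^2(G)_\dec$, the reverse inclusion being trivial.

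The main obstacle is justifying the clean Künneth isomorphism over the non-field ring $\dbZ/q$ when $s\geq2$: one must check that each cohomology group of a factor is $\dbZ/q$-free so the Tor terms disappear, and that the degree-$2$ generator of each factor is genuinely $\beta(x_i)$ rather than merely a multiple of it. Once the freeness and the bijectivity of $\beta$ on a single factor are secured, the degree-$2$ computation is immediate and uniform in $p$ and $s$; in particular no separate treatment of $p=2$ is required, since the only facts used are that $\beta$ is an isomorphism on each cyclic factor and that off-diagonal products are decomposable.
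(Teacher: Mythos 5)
Your proof is correct, but it takes a genuinely different route from the paper's. The paper disposes of the finite case by citing \cite{EfratMinac11}*{Cor.\ 2.11} and then uses the same limit argument you do; its self-contained alternative proof (Proposition \ref{cup products}) never invokes K\"unneth. Instead it realizes $(\dbZ/q)^I$ as $S^{[2]}$ for a free profinite group $S$, uses $H^2(S)=0$ and the five-term sequence to identify $H^2(S^{[2]})$ with the dual of $S^{(2)}/S^{(3)}$ via transgression, and computes explicit dual bases by cocycle calculations (Lemmas \ref{lemma on cup products} and \ref{lemma on Bocksteins}): the classes $\beta_{S^{[2]}}(\chi_i)$ and $\chi_i\cup\chi_j$, $i<j$, are dual to $\sig_i^qS^{(3)}$ and $[\sig_i,\sig_j]S^{(3)}$. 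That route yields strictly more than the lemma --- an explicit \emph{basis} of $H^2(S^{[2]})$, which the paper needs afterwards (Corollary \ref{decomposable part of H2(S2)} and the local-global principle of Corollary \ref{local global}) --- and it reuses duality machinery the paper must develop anyway. Your argument is more classical and self-contained, and gives the whole graded ring at once; the one step requiring care is exactly the one you flag. Since $\dbZ/q$ is not hereditary for $s\geq2$, the textbook K\"unneth exact sequence does not apply verbatim, and freeness of the cohomology of the factors is not by itself the full hypothesis: the general form (e.g.\ Weibel, Th.\ 3.6.3) also requires flatness of the (co)boundary modules. This is easily secured here: compute each cyclic factor from its standard periodic free resolution, whose induced cochain complex with trivial $\dbZ/q$-coefficients has zero differentials; the tensor product of these complexes then has free terms and zero differentials, so it computes $H^*((\dbZ/q)^n)$ with no correction terms, and your degree-$2$ identification --- with $y_i=\beta_G(x_i)$ by naturality of the Bockstein and its bijectivity on each cyclic factor --- goes through, uniformly in $p$ and $s$ as you say.
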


For $r\geq0$ let $B_r$ be the subgroup of $H^1(G)^{\tensor r}$ generated by all elements
$\psi_1\tensor\cdots\tensor \psi_r$ such that $\beta_G(\psi_i)=0$ for some $i$.
We define a graded ring $H^*(G)_\Bock=\bigoplus_{r=0}^\infty H^1(G)^{\tensor r}/B_r$ with the tensor product.

\subsection*{C) Galois cohomology}
The following theorem collects the cohomological properties of an absolute Galois group which are needed
for this paper.

\begin{thm}
\label{absolute Galois groups}
Let $F$ be a field containing a root of unity $\zeta_q$ of order $q$
and let $G_F$ be its absolute Galois group.
Then:
\begin{enumerate}
\item[(i)]
$G_F^{[2]}\isom(\dbZ/q)^I$;
\item[(ii)]
there exists $\xi\in H^1(G_F)$ with $\beta_G(\psi)=\psi\cup\xi$ for every $\psi\in H^1(G_F)$;
\item[(iii)]
$H^*(G_F)=H^*(G_F)_\dec$;
\item[(iv)]
$H^*(G_F)$ is quadratic.
\end{enumerate}
\end{thm}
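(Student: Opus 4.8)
The plan is to prove the four assertions separately, reserving the genuine elementary work for (i) and (ii) and deducing (iii) and (iv) from the norm residue isomorphism quoted in the introduction. For (i) I would argue via Kummer theory. Since $\zeta_q\in F$, the quotient $G_F^{[2]}=G_F/G_F^{(2)}$ is the maximal abelian quotient of $G_F$ of exponent dividing $q$; it is therefore a profinite $\dbZ/q$-module whose Pontryagin dual is $H^1(G_F)\isom F^\times/(F^\times)^q$, the last isomorphism being Kummer theory together with the (trivial $G_F$-action) identification $\mu_q\isom\dbZ/q$. It thus suffices to prove that the abstract group $M=F^\times/(F^\times)^q$ is a free $\dbZ/q$-module, for then its dual is $(\dbZ/q)^I$. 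By the structure theory of abelian groups of bounded exponent, $M$ is a direct sum of cyclic $p$-groups, and freeness over $\dbZ/p^s$ amounts to the inclusion $M[p^{s-1}]\subseteq pM$. To verify it, take $a\in F^\times$ with $a^{p^{s-1}}\in(F^\times)^q$ and write $a^{p^{s-1}}=b^{p^s}$; then $a/b^p$ is a root of unity of order dividing $p^{s-1}$, and since $\mu_{p^s}\subseteq F$ the $p$-th power map $\mu_{p^s}\to\mu_{p^{s-1}}$ is surjective, so $a/b^p$, hence $a$, lies in $(F^\times)^p$. This gives $M[p^{s-1}]\subseteq pM$ and hence the freeness; dualizing yields $G_F^{[2]}\isom(\dbZ/q)^I$.

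For (ii) I would take $\xi$ to be the Kummer class of $\zeta_q$ under $H^1(G_F)\isom F^\times/(F^\times)^q$ (with the normalization fixing $\dbZ/q\isom\mu_q$) and identify the Bockstein with cup product by $\xi$. The identity $\beta_{G_F}(\psi)=\psi\cup\xi$ is the standard comparison of the connecting map of $0\to\dbZ/q\to\dbZ/q^2\to\dbZ/q\to0$ with the analogous sequence $0\to\mu_q\to\mu_{q^2}\to\mu_q\to0$, carried out through the multiplicativity of the Kummer map; this is recorded in \cite{EfratMinac11}*{Cor.\ 2.11} (which also underlies Lemma \ref{H2 for elementary abelian groups}), and I would invoke it, the only delicate point being the bookkeeping of the twist by $\zeta_q$.

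Statements (iii) and (iv) I would read off the norm residue isomorphism $K^M_*(F)/qK^M_*(F)\to H^*(G_F)$. Because $\mu_q\isom\dbZ/q$ the Tate twists are trivial, and since $K^M_*(F)$ is generated in degree $1$, surjectivity of the symbol forces $H^*(G_F)$ to be generated by $H^1(G_F)$ under cup products, i.e. $H^*(G_F)=H^*(G_F)_\dec$, giving (iii). For (iv) I would use the full bijectivity: the symbol is a graded ring isomorphism sending $\{a,b\}$ to $\chi_a\cup\chi_b$, so the degree-$2$ vanishing conditions $\psi_i\cup\psi_j=0$ correspond exactly to the vanishing Milnor symbols. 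Since the defining ideal of $K^M_*(F)$ is generated in degree $2$ (the Steinberg relations), one identifies the kernel of the iterated cup product $H^1(G_F)^{\tensor r}\to H^r(G_F)$ with $C_r$ for every $r$, i.e. $\omega_{G_F}$ is injective, proving that $H^*(G_F)$ is quadratic.

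The main obstacle is the degree-by-degree matching in (iv): one must confirm that after reducing the Milnor relations modulo $q$ the resulting graded algebra still has its relations generated in degree $2$, so that $\Ker\bigl(H^1(G_F)^{\tensor r}\to H^r(G_F)\bigr)$ coincides with $C_r$ in all degrees; this is precisely what lets the norm residue theorem identify $\widehat{H^*(G_F)}$ with $H^*(G_F)$. By contrast, the only genuinely self-contained input is the freeness computation in (i), where the hypothesis $\zeta_q\in F$ is indispensable.
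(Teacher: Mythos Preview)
Your proposal is correct and follows essentially the same route as the paper: Kummer theory for (i), the Kummer class of $\zeta_q$ for (ii), and the surjectivity/injectivity of the Galois symbol for (iii)/(iv). One small slip: the reference you cite for the Bockstein identity in (ii) is the one the paper uses for Lemma~\ref{H2 for elementary abelian groups}; the correct citation for $\beta_{G_F}(\psi)=\psi\cup\xi$ is \cite{EfratMinac11}*{Prop.\ 3.2}.
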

\begin{proof}
We identify the group $\mu_p$ of $q$th roots of unity in $F$ with $\dbZ/q$, where $\zeta_q$ corresponds to $1$ mod $q$.
The obvious map
$F^\times/(F^\times)^q\to F^\times/(F^\times)^p$ is surjective, and by the Kummer isomorphism,
so is the functorial map $H^1(G_F)=H^1(G_F,\dbZ/q)\to H^1(G_F,\dbZ/p)$.
This implies (i).
In (ii) one takes $\xi$ to be the Kummer element corresponding to $\zeta_q$ (see \cite{EfratMinac11}*{Prop.\ 3.2}).
Condition (iii) (resp., (iv)) follows from the surjectivity (resp., injectivity) of the Galois symbol
$K^M_*(F)/q\to H^*(G_F)$, as proved by Rost, Voevodsky, with a patch by Weibel (see \cite{CheboluEfratMinac}*{\S8}).
\end{proof}

\begin{rem}
\rm
Many of our results do not require the full strength of conditions (iii) and (iv)
of Theorem \ref{absolute Galois groups}, but only their validity in degrees
$1$ and $2$.
These weaker facts follow from the Kummer isomorphism and the Merkurjev--Suslin theorem
(\cite{MerkurjevSuslin82}, \cite{GilleSzamuely}), respectively.
\end{rem}

\begin{rem}
\label{GF(p)}
\rm
By \cite{CheboluEfratMinac}*{Remark 8.2}, $\inf\colon H^*(G_F(p))\to H^*(G_F)$ is an isomorphism.
Therefore (i)--(iv) hold also for $G_F(p)$.
\end{rem}

\section{Duality}
\label{section on duality}
Let $G$ be a profinite group and let $T,T_0$ be normal subgroups of $G$ such that $T^q[T,G]\leq T_0\leq T\leq G^{(2)}$.
Let
\[
K=\Ker\bigl(H^1(T)^G\xrightarrow{\res} H^1(T_0)\bigr),\quad
K'=\Ker\bigl(H^2(G/T)\xrightarrow{\inf} H^2(G/T_0)\bigr).
\]
Note that if $T_0=T^q[T,G]$, then $K=H^1(T)^G$.
As $T,T_0\leq G^{(2)}$, the inflations $H^1(G/T)\to H^1(G)$, $H^1(G/T_0)\to H^1(G)$ are isomorphisms.
The functoriality of the 5-term sequence \cite{NeukirchSchmidtWingberg}*{pp.\ 78--79}
gives a commutative diagram with exact rows
\begin{equation}
\label{5 term sequence}
\xymatrix{
0\ar[r] & H^1(T)^G\ar[d]_{\res} \ar[r]^{\trg} & H^2(G/T)\ar[d]^{\inf} \ar[r]^{\ \inf} & H^2(G)\ar@{=}[d] \\
0\ar[r] & H^1(T_0)^G \ar[r]^{\trg} & H^2(G/T_0) \ar[r]^{\ \inf} & H^2(G). \\
}
\end{equation}
By the snake lemma, $K,K'$ are therefore isomorphic via transgression.

There is a perfect duality
\begin{equation}
\label{pairing1}
T/T^q[T,G]\times H^1(T)^G\to\dbZ/q, \quad (\bar\sig,\psi)\mapsto\psi(\sig)
\end{equation}
which is functorial in the natural sense \cite{EfratMinac11}*{Cor.\ 2.2}.
It induces a (functorial) perfect duality
\begin{equation}
\label{pairing2}
T/T^q[T,G]\times \trg(H^1(T)^G)\to\dbZ/q, \quad \langle\bar\sig,\varphi\rangle\mapsto(\trg\inv(\varphi))(\sig).
\end{equation}

\begin{prop}
\label{generalized basic duality}
(\ref{pairing1}) and (\ref{pairing2}) induce perfect pairings
\begin{enumerate}
\item[(a)]
$(\cdot,\cdot)\colon T/T_0\ \times\ K\to \dbZ/q$, $(\sig T_0,\psi)\mapsto\psi(\sig)$;
\item[(b)]
$\langle\cdot,\cdot\rangle\colon T/T_0\times K'\to \dbZ/q$,
$\langle\sig T_0,\varphi\rangle=(\trg\inv(\varphi))(\sig)$.
\end{enumerate}
\end{prop}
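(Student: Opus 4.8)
The plan is to deduce Proposition \ref{generalized basic duality} from the already-established perfect dualities (\ref{pairing1}) and (\ref{pairing2}) by carefully tracking the effect of passing from $T$ to the smaller normal subgroup $T_0$. The two assertions (a) and (b) are dual formulations of the same phenomenon, linked through the transgression isomorphism $K\xrightarrow{\sim} K'$ established just before the statement via the snake lemma applied to the diagram (\ref{5 term sequence}); hence I expect (b) to follow formally from (a) once the sign conventions and the identification $\langle\sig T_0,\varphi\rangle=(\trg\inv(\varphi))(\sig)$ are matched with $(\sig T_0,\psi)=\psi(\sig)$ under $\varphi=\trg(\psi)$. So the real content is (a).

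For part (a), first I would verify that the pairing is well defined. The source $T/T_0$ requires that $\psi(\sig)$ depend only on the coset $\sig T_0$ for $\psi\in K$; since $K=\Ker\bigl(H^1(T)^G\xrightarrow{\res}H^1(T_0)\bigr)$, any $\psi\in K$ restricts to zero on $T_0$, so $\psi$ kills $T_0$ and the formula descends to $T/T_0$. Conversely, because $T^q[T,G]\leq T_0$, every $\psi\in H^1(T)^G$ already kills $T^q[T,G]$, so the original duality (\ref{pairing1}) factors through $T/T^q[T,G]$ and the pairing on $T/T_0$ is inherited. Next I would establish perfectness by a dimension-count/orthogonal-complement argument. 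The key is that under the perfect pairing (\ref{pairing1}) between $T/T^q[T,G]$ and $H^1(T)^G$, the annihilator of the subgroup $K\leq H^1(T)^G$ is exactly the image of $T_0/T^q[T,G]$ in $T/T^q[T,G]$. One inclusion is immediate: $K$ vanishes on $T_0$, so $T_0$ lies in the annihilator of $K$. For the reverse inclusion I would use that $\psi\in H^1(T)^G$ lies in $K$ precisely when $\res_{T_0}\psi=0$, i.e. $\psi$ is pulled back from $H^1(T/T_0)^{G}$; dualizing, the annihilator of $K$ is no larger than $T_0/T^q[T,G]$. Taking the induced pairing on the quotient $T/T_0$ by the annihilator $T_0/T^q[T,G]$ and the quotient $H^1(T)^G$ by... — rather, by the standard fact that a perfect pairing of finite (or profinite/discrete, taking $\dbZ/q$-linear duals and continuity into account) $\dbZ/q$-modules restricts to a perfect pairing between a submodule's annihilator-quotient and the submodule, one concludes that $T/T_0\times K\to\dbZ/q$ is perfect.

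The step I expect to be the main obstacle is the precise identification of the annihilator of $K$ under (\ref{pairing1}) with the image of $T_0$. The inclusion ``$T_0\subseteq\mathrm{ann}(K)$'' is easy, but the reverse requires knowing that $H^1(T)^G$ separates the cosets in $T/T_0$ that are not represented in $T_0/T^q[T,G]$, equivalently that $K$ is large enough. The cleanest route is to exploit the transgression isomorphism $K\cong K'$ from the snake lemma, together with the observation that $\res\colon H^1(T)^G\to H^1(T_0)^G$ lands in the $G$-invariants and its kernel $K$ is, by (\ref{5 term sequence}), exactly $\trg\inv(K')$; combining this with the functoriality of (\ref{pairing1}) for the inclusion $T_0\hookrightarrow T$ pins down the annihilator. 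Once the annihilator is computed, perfectness of both (a) and (b) follows, and the compatibility $\langle\sig T_0,\trg(\psi)\rangle=(\sig T_0,\psi)$ makes (b) the transgression-transported form of (a), completing the proof.
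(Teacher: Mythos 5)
Your skeleton---well-definedness, an annihilator computation, the standard fact that a perfect pairing induces a perfect pairing between an annihilator and the corresponding quotient, and deducing (b) from (a) via the transgression isomorphism $K\isom K'$---matches the paper's proof in outline. But you have oriented the annihilator computation the hard way around, and the justification you offer for the resulting nontrivial step does not work. The paper applies the standard fact to the subgroup $A_0=$ image of $T_0$ inside $A=T/T^q[T,G]$ on the \emph{profinite} side: the group that the fact pairs perfectly with $A/A_0=T/T_0$ is then $\{\psi\in H^1(T)^G \mid \psi(T_0)=0\}$, which equals $K$ \emph{by the very definition} $K=\Ker\bigl(\res\colon H^1(T)^G\to H^1(T_0)\bigr)$; nothing remains to be proved. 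You instead start from $K\leq H^1(T)^G$ on the discrete side, so you must prove the reverse inclusion $\mathrm{ann}(K)\subseteq A_0$, i.e.\ that $K$ separates the points of $T/T_0$. That is a genuine claim (it is Pontryagin biduality for the closed subgroup $A_0$), and your proposed ``cleanest route'' via the isomorphism $K\isom K'$ and the $5$-term sequence cannot establish it: transgression merely relabels $K$ inside $H^2(G/T)$ and produces no new characters of $T$, so it says nothing about $K$ being large enough. Likewise, the one-word step ``dualizing'' in your second paragraph is just a restatement of the claim to be proved.

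The claim is true and your proof is repairable in one line, but not by transgression. Since $T^q[T,G]\leq T_0$, any continuous homomorphism $T/T_0\to\dbZ/q$ pulls back to an element of $H^1(T)$ killing $[T,G]$; killing $[T,G]$ is equivalent to $G$-invariance (because $\psi^g(t)-\psi(t)=\psi([t\inv,g]\inv)$), so every such homomorphism lies in $K$. Pontryagin duality for the profinite abelian exponent-$q$ group $T/T_0$ (abelian of exponent $q$ because $T^q[T,T]\leq T_0$) then yields, for each $\sig\in T\setminus T_0$, an element of $K$ not vanishing at $\sig$, which gives $\mathrm{ann}(K)\subseteq A_0$. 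Alternatively, simply flip the orientation as the paper does, and the problematic step disappears. Your treatment of well-definedness and of part (b) is fine.
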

\begin{proof}
(a)\quad
A perfect pairing of abelian groups $(\cdot,\cdot)\colon A\times B\to C$ induces for any $A_0\leq A$
a perfect pairing $(A/A_0)\times \{b\in B\ |\ (A_0,b)=0\}\to C$.
Take $A=T/T^q[T,G]$, $B=H^1(T)^G$, $C=\dbZ/q$, and let $A_0$ be the image of $T_0$ in $A$.
By (\ref{pairing1}), the substitution pairing $A\times B\to \dbZ/q$ is perfect.
Furthermore, for $\psi\in H^1(T)^G$ we have $(A_0,\psi)=0$ if and only if $\psi\in K$.

(b) follows from (a).
\end{proof}

For the connection between the second cohomology and central extensions see e.g.,
\cite{NeukirchSchmidtWingberg}*{Th.\ 1.2.4} or \cite{RibesZalesskii}*{\S6.8}.

\begin{prop}
\label{equivelence for dualities}
Let $A$ be a subgroup of $H^2(G/T)$ and let $A_0$ be a set of generators of
$A\cap\trg(H^1(T)^G)$.
The following conditions are equivalent:
\begin{enumerate}
\item[(a)]
$K=\trg\inv[A]$;
\item[(b)]
there is an exact sequence
\[
0\to K\xrightarrow{\trg} A\xrightarrow{\inf} H^2(G);
\]
\item[(c)]
there is an exact sequence
\[
0\to K'\hookrightarrow A\xrightarrow{\inf} H^2(G);
\]
\item[(d)]
$\langle\cdot,\cdot\rangle$ induces a perfect pairing
\[
T/T_0\times \Ker\bigl(A\xrightarrow{\inf} H^2(G)\bigr)\to \dbZ/q;
\]
\item[(e)]
$T_0$ is the annihilator of $A\cap\trg(H^1(T)^G)$ under $\langle\cdot,\cdot\rangle$;
\item[(f)]
$T_0=\bigcap\Ker(\Psi)$, where $\Psi$ ranges over all homomorphisms making a commutative diagram
\[
\xymatrix{
&&&&   G  \ar[dl]_{\Psi} \ar[d] \\
\omega: &0\ar[r] &  \dbZ/q\ar[r] &   C\ar[r]  & G/T\ar[r] & 1,
}
\]
where $\omega$ is the central extension corresponding to some $\varphi\in A_0$
(where an empty intersection is interpreted as $T$).
\end{enumerate}
\end{prop}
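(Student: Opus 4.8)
The plan is to prove the equivalence of (a)--(f) by establishing a cycle of implications, exploiting the fact that transgression gives an isomorphism $\trg\colon H^1(T)^G \xrightarrow{\sim} \ker(\inf\colon H^2(G/T)\to H^2(G))$, which is the content of the exact top row of \eqref{5 term sequence}. The core observation is that all six conditions are really reformulations of a single statement: that the subgroup $A\cap\trg(H^1(T)^G)$ of $H^2(G/T)$ corresponds, under the duality of Proposition \ref{generalized basic duality}, exactly to the subquotient $T_0/T^q[T,G]$ of $T/T^q[T,G]$. First I would record that since $\trg$ is injective with image $\ker(\inf)$, we have $\trg(K) = A\cap\trg(H^1(T)^G)$ precisely when $K=\trg\inv[A]$; this makes (a) and (b) immediately equivalent, the map in (b) being the restriction of $\trg$ to $K$ landing in $A$ with trivial composite into $H^2(G)$.

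The implications (b) $\Leftrightarrow$ (c) follow from the snake-lemma isomorphism $K\isom K'$ via transgression noted after \eqref{5 term sequence}: the inclusion $K'\hookrightarrow A$ in (c) is just the image of $K$ under $\trg$, now viewed inside $H^2(G/T)$, and exactness at $A$ in both sequences expresses the same condition $\ker(A\to H^2(G))\subseteq\trg(K)$. For (c) $\Leftrightarrow$ (d), I would invoke Proposition \ref{generalized basic duality}(b), which gives a perfect pairing $T/T_0\times K'\to\dbZ/q$; condition (d) asserts that $\ker(A\xrightarrow{\inf}H^2(G))$ equals $K'$ as a subgroup of $A$, and perfectness of the induced pairing is equivalent to this equality since the original pairing on $T/T_0\times K'$ is already perfect. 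The equivalence (a) $\Leftrightarrow$ (e) is the duality-theoretic heart: under the perfect pairing \eqref{pairing2} on $T/T^q[T,G]\times\trg(H^1(T)^G)$, the annihilator of the subgroup $A\cap\trg(H^1(T)^G)$ is exactly the image in $T/T^q[T,G]$ of $\trg\inv[A]$'s dual, and saying this annihilator is $T_0$ is a restatement of $K=\trg\inv[A]$ via the duality of Proposition \ref{generalized basic duality}; here I would use that annihilators correspond under a perfect pairing, so $A\cap\trg(H^1(T)^G)$ and its annihilator determine each other.

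The step I expect to be the main obstacle is (e) $\Leftrightarrow$ (f), which translates the abstract duality condition into the concrete group-theoretic description in terms of central extensions and lifts $\Psi\colon G\to C$. The plan is to use the standard dictionary between $H^2(G/T)$ and equivalence classes of central extensions of $G/T$ by $\dbZ/q$ (cited before the proposition from \cite{NeukirchSchmidtWingberg}*{Th.\ 1.2.4}). For $\varphi\in A_0$ with corresponding extension $\omega$, a homomorphism $\Psi\colon G\to C$ lifting the projection $G\to G/T$ exists precisely when the pullback of $\omega$ along $G\to G/T$ splits, i.e.\ when $\inf(\varphi)=0$ in $H^2(G)$; and when it exists, $\Psi$ is determined up to a homomorphism $G\to\dbZ/q$, with $\ker(\Psi)\cap T$ controlled by the value $(\trg\inv(\varphi))(\sigma)=\langle\sigma T_0,\varphi\rangle$ on $\sigma\in T$. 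I would then show that $\bigcap_\varphi\ker(\Psi)$ restricted to $T$ is exactly the annihilator of $A\cap\trg(H^1(T)^G)$ in $T/T^q[T,G]$, lifted back to $T$; the subtlety is checking that elements of $G$ outside $T$ do not survive the intersection unless they already lie in $T$ (giving the empty-intersection convention $=T$), and that restricting the $\Psi$'s to $T$ recovers precisely the pairing values, so that $\bigcap\ker\Psi = T_0$ iff $T_0$ is the stated annihilator. This identification of kernels with annihilators, carefully tracking the case where no lift $\Psi$ exists (when $A\not\subseteq\ker\inf$), is where the real work lies.
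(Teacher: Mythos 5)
Your proposal is correct and follows essentially the same route as the paper: the same cycle of equivalences, with (a)$\Leftrightarrow$(b) and the annihilator statement coming from the 5-term sequence, (b)$\Leftrightarrow$(c) from $\trg\colon K\xrightarrow{\sim}K'$, (c)$\Leftrightarrow$(d) from Proposition \ref{generalized basic duality}(b), and (e)$\Leftrightarrow$(f) from the dictionary between $H^2(G/T)$ and central extensions. The one piece you flag as ``the real work''---that every lift $\Psi$ exists and restricts on $T$ to $\trg\inv(\varphi)$, so that $\Ker(\Psi)\leq T$ and the kernels compute the pairing values---is exactly what the paper disposes of by citing Hoechsmann \cite{Hoechsmann68}*{2.1}; note also that your worry about tracking the case where no lift exists is vacuous, since every $\varphi\in A_0$ lies in $A\cap\trg(H^1(T)^G)\subseteq\Ker(\inf_G)$ by exactness of the 5-term sequence, so lifts always exist and the empty-intersection convention is only needed when $A_0$ itself is empty.
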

\begin{proof}
The 5-term sequence gives (a)$\Leftrightarrow$(b) and (d)$\Leftrightarrow$(e).

The equivalence (b)$\Leftrightarrow$(c) follows from the bijectivity of $\trg\colon K\to K'$.

For (c)$\Leftrightarrow$(d) use Proposition \ref{generalized basic duality}(b).

For (e)$\Leftrightarrow$(f) let $\varphi\in A_0$ and let $\omega$ be the corresponding central extension as above.
By \cite{Hoechsmann68}*{2.1}, for every $\psi\in H^1(T)^G$ with $\trg(\psi)=\varphi$
there is a homomorphism $\Psi\colon G\to C$ such that $\psi=\Psi|_T$ and the diagram in (f) is commutative.
For such $\Psi,\psi$ and for $\sig\in T$ and $\varphi\in A_0$ we have
$\langle\sig T_0,\varphi\rangle=\psi(\sig)=\Psi(\sig)$.
Moreover, $\Ker(\Psi)\leq T$.
We conclude that the annihilator of $A\cap\trg(H^1(T)^G)$ in $T$ under $\langle\cdot,\cdot\rangle$ is $\bigcap\Ker(\Psi)$.
\end{proof}

When these conditions are satisfied, we  say that $A$ is \textbf{dual} to $(T,T_0)$.

\begin{examples}
\label{examples of duality}
\rm

(1)\quad
In \S\ref{section on free groups} we will show that, when $G^{[2]}\isom (\dbZ/q)^I$ for some $I$,
$H^2(G^{[2]})_\dec$ is dual to $(G^{(2)},G_{(3)})$.

\medskip

(2)\quad
For $i\geq2$, $H^2(G^{[i]})$ is dual to $(G^{(i)},G^{(i+1)})$.
Indeed, here $K=H^1(G^{(i)})^G$ \cite{CheboluEfratMinac}*{Lemma 5.4}, so by
the 5-term sequence, (b) holds.

\medskip

(3)\quad
For $\bar G=\dbZ/q$ and for an isomorphism $\psi\in H^1(\bar G)$,
the central extension corresponding to $\beta_{\dbZ/q}(\psi)$ is
\[
0\to\dbZ/q\to\dbZ/q^2\to\dbZ/q\to0
\]
(see \cite{EfratMinac11}*{Prop.\ 9.2}).
For $\bar G=(\dbZ/q)^I$, and for the projection $\psi\colon\bar G\to\dbZ/q$ on the $i_0$th coordinate,
the extension corresponding to $\beta_{\bar G}(\psi)$ is then
\[
0\to \dbZ/q\to\prod_{i\in I}C_i\to \bar G\to 1,
\]
where $C_{i_0}=\dbZ/q^2$ and $C_i=\dbZ/q$ for $i\neq i_0$, with the natural maps \cite{EfratMinac11}*{Lemma 6.2}.

Now assume that $G$ is a profinite group with $G^{[2]}\isom (\dbZ/q)^I$ for some $I$.
Take $T=G^{(2)}$ and $A=A_0=\Img(\beta_{G^{[2]}})$.
Note that then every homomorphism $\Psi$ as in (e) is necessarily surjective, and $\Ker(\Psi)\leq T=G^{(2)}$.
We deduce that the intersection in (f) is
\[
G^{(2)}\cap\bigcap\{M\trianglelefteq G\ |\ G/M\isom\dbZ/q^2\}=G^{q^2}[G,G].
\]
Thus  $A$ is dual to $(G^{(2)},G^{q^2}[G,G])$.
\end{examples}

\begin{prop}
\label{equality of kernels}
Suppose that $A$ is dual to $(T,T_0)$.
Then the kernels of $\inf_{G/T_0}\colon A\to H^2(G/T_0)$, $\inf_G\colon A\to H^2(G)$ coincide.
\end{prop}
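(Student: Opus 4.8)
The claim to prove is Proposition \ref{equality of kernels}: when $A$ is dual to $(T,T_0)$, the kernels of $\inf_{G/T_0}\colon A\to H^2(G/T_0)$ and $\inf_G\colon A\to H^2(G)$ coincide. The plan is to exploit the duality characterization (Proposition \ref{equivelence for dualities}), particularly condition (c), together with the functoriality of the five-term exact sequence recorded in diagram (\ref{5 term sequence}).

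First I would observe that one inclusion is essentially formal. Since $T_0\leq T$, the quotient map $G/T_0\to G/T$ factors the projection $G\to G/T$, so inflation is transitive: the composite $A\hookrightarrow H^2(G/T)\xrightarrow{\inf_{G/T_0}} H^2(G/T_0)\xrightarrow{\inf} H^2(G)$ equals $\inf_G\colon A\to H^2(G)$. Hence any element of $A$ killed by $\inf_{G/T_0}$ is automatically killed by $\inf_G$, giving $\Ker(\inf_{G/T_0})\subseteq\Ker(\inf_G)$.

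For the reverse inclusion I would use the duality hypothesis in the form of condition (c): there is an exact sequence $0\to K'\hookrightarrow A\xrightarrow{\inf} H^2(G)$, so that $\Ker(\inf_G\colon A\to H^2(G))=K'=\Ker\bigl(H^2(G/T)\xrightarrow{\inf} H^2(G/T_0)\bigr)\cap A$. But $K'$ is by its very definition the kernel of inflation from $H^2(G/T)$ into $H^2(G/T_0)$; intersecting with $A$ (viewed as a subgroup of $H^2(G/T)$) gives exactly the elements of $A$ that die under $\inf_{G/T_0}$. Thus every element of $\Ker(\inf_G\colon A\to H^2(G))$ already lies in $\Ker(\inf_{G/T_0})$, which is the inclusion we wanted. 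Combining the two inclusions yields equality.

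I expect the main point requiring care to be the bookkeeping of where each group lives: $K'$ was defined as a subgroup of $H^2(G/T)$, and one must confirm that under the identification of $A$ with a subgroup of $H^2(G/T)$ the kernel of $\inf_{G/T_0}$ restricted to $A$ is precisely $K'$, rather than merely contained in it. This is immediate from the definition of $K'$ as $\Ker(\inf\colon H^2(G/T)\to H^2(G/T_0))$ once one notes that an element of $A$ lies in this kernel exactly when its image in $H^2(G/T_0)$ vanishes. With that identification in hand the proof is short; the only substantive input is condition (c) of the duality, equivalently the bijectivity of transgression $K\xrightarrow{\sim}K'$ established via the snake lemma applied to (\ref{5 term sequence}).
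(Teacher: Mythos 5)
Your proof is correct and takes essentially the same route as the paper, whose entire proof is the one-line citation of condition (c) of Proposition \ref{equivelence for dualities}: exactness of $0\to K'\hookrightarrow A\xrightarrow{\inf} H^2(G)$ gives $\Ker(\inf_G|_A)=K'$, while $K'\subseteq A$ gives $\Ker(\inf_{G/T_0}|_A)=A\cap K'=K'$. Your write-up simply makes explicit the bookkeeping (including the formal inclusion via transitivity of inflation) that the paper calls ``straightforward.''
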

\begin{proof}
This is straightforward from condition (c) of Proposition \ref{equivelence for dualities}.
\end{proof}

\section{Cohomological duality triples}
\label{section on cohomological duality triples}
In this section we axiomatize several important cases of functorial subgroups of profinite groups, so that we can
treat them all in a unified way.
For a profinite group $G$ we set $H^{\tensor*}(G)=\bigoplus_{r=0}^\infty H^1(G)^{\tensor r}$, considered as an \textsl{abelian group}.
Assume that we are given:
\begin{enumerate}
\item[(i)]
subfunctors $T, T_0$ of the identity functor on the category of profinite groups;
\item[(ii)]
a natural transformation $\alp$ from the functor $G\mapsto H^{\tensor*}(G)$
to the functor $G\mapsto H^2(G)$ (both from the category of profinite groups to the category of abelian groups).
\end{enumerate}
In other words, for every profinite group $G$ we are given subgroups $T(G),T_0(G)$ of $G$ and a group homomorphism
$\alp_G\colon H^{\tensor*}(G)\to H^2(G)$,
and for every homomorphism $\pi\colon G_1\to G_2$ of profinite groups
there are commutative squares
\[
\xymatrix{
T(G_1) \ar@{^{(}->}[r]\ar[d]_{T(\pi)} & G_1\ar[d]^{\pi} & T_0(G_1) \ar@{^{(}->}[r]\ar[d]_{T_0(\pi)} & G_1\ar[d]^{\pi} &
H^{\tensor*}(G_2) \ar[d]_{\pi^*} \ar[r]^{\alp_{G_2}}  &  H^2(G_2) \ar[d]^{\pi^*_2}
      \\
T(G_2)\ar@{^{(}->}[r] &G_2 & T_0(G_2) \ar@{^{(}->}[r] & G_2    &
H^{\tensor*}(G_1)  \ar[r]^{\alp_{G_1}} & H^2(G_1).
}
\]
We denote
\[
K(G)=\Ker\bigl(\res\colon H^1(T(G))^G\to H^1(T_0(G))\bigr), \qquad
A(G)=\Img(\alp_G).
\]
Observe that, in the previous setup, $\pi^*_2(A(G_2))\subseteq A(G_1)$.

A \textbf{cover} of a profinite group $G$ (relative to $T$) will be an epimorphism $\pi\colon S\to G$,
where $S$ is a profinite group such that $H^2(S)=0$ and the induced map $S/T(S)\to G/T(G)$ is an isomorphism.

\begin{exam}
\label{covers}
\rm
For a profinite group $G$ let $T(G)=G^{(2)}$.
The existence of a cover $S\to G$ means that $G^{[2]}\isom(\dbZ/q)^I$.
Indeed, for such $G$ take $S$ to be a free profinite group of the appropriate rank \cite{NeukirchSchmidtWingberg}*{3.5.4}.
Conversely, a cover $\pi\colon S\to G$ induces an epimorphism $\pi(p)\colon S(p)\to G(p)$ of the maximal pro-$p$ quotients.
Then $H^2(S(p))=0$ \cite{CheboluEfratMinac}*{Lemma 6.5}, so $S(p)$ is a free pro-$p$ group \cite{NeukirchSchmidtWingberg}*{Prop.\ 3.5.17}.
Thus $G^{[2]}\isom S^{[2]}\isom S(p)^{[2]}\isom (\dbZ/q)^I$ for some $I$.
\end{exam}

We call $(T,T_0,\alp)$ a \textbf{cohomological duality triple} if for every profinite group $G$ the following conditions hold:
\begin{enumerate}
\item[(A1)]
$T(G),T_0(G)$ are normal subgroups of $G$;
\item[(A2)]
$T(G)^q[T(G),G]\leq T_0(G)\leq T(G)\leq G^{(2)}$;
\item[(A3)]
for every epimorphism $\pi\colon G\to\bar G$ one has $T(\bar G)=\pi(T(G))$ and $T_0(\bar G)=\pi(T_0(G))$;
\item[(A4)]
if there is a cover $S\to G$, then $A(G/T(G))$ is dual to $(T(G),T_0(G))$.
\end{enumerate}

We list three basic examples of cohomological duality triples.
Condition (A2) for example (2) was shown in Remark \ref{remark on the def of G3}(2), and
(A4) for all these examples is just Examples \ref{examples of duality}.
The verification of the remaining conditions is straightforward.

\begin{examples}
\label{examples of cohomological duality triples}
\rm
(1)\quad
Let $T(G)=G^{(2)}$, $T_0(G)=G_{(3)}$, and let $\alp_G$ be the cup product on $H^1(G)^{\tensor2}$ and
the trivial map on $H^1(G)^{\tensor r}$ for $r\neq2$.
Thus $A(G)=H^2(G)_\dec$.

\medskip

(2)\quad
Let $T(G)=G^{(2)}$, $T_0(G)=G^{(3)}$,  and let
$\alp_G$ be $\beta_G$ on $H^1(G)$, the cup product on $H^1(G)^{\tensor2}$, and
the trivial map on $H^1(G)^{\tensor r}$ for $r\neq1,2$.
Then $A(G)=\Img(\beta_G)+H^2(G)_\dec$.
If there is a cover $S\to G$, then by Lemma \ref{H2 for elementary abelian groups} and Example \ref{covers},
$A(G^{[2]})=H^2(G^{[2]})$,
so indeed, Example \ref{examples of duality}(2) applies.

\medskip

(3)\quad
Let $T(G)=G^{(2)}$, $T_0(G)=G^{q^2}[G,G]$,
and  let $\alp_G=\beta_G$ on $H^1(G)$ and $\alp_G=0$ on $H^1(G)^{\tensor r}$ for $r\neq1$.
Thus $A(G)=\Img(\beta_G)$.
\end{examples}

\begin{rem}
\rm
Let $(T,T_0,\alp)$ be a cohomological duality triple, $\pi\colon G_1\to G_2$ an epimorphism of profinite groups,
and suppose that there are covers $S\to G_i$, $i=1,2$, which commute with $\pi$.
Then $\pi$ induces an isomorphism $G_1/T(G_1)\xrightarrow{\sim}G_2/T(G_2)$.
As $T(G_i)\leq G_i^{(2)}$, the induced map $H^{\tensor*}(G_2)\to H^{\tensor*}(G_1)$ is therefore an isomorphism.
Hence $\pi^*_2(A(G_2))=A(G_1)$.
\end{rem}

For a cohomological duality triple $(T,T_0,\alp)$ and for $r\geq0$, $t\geq1$,
let $C_{r,t}(G)$ be the $\dbZ/q$-submodule of $H^1(G)^{\tensor r}$ generated by all its
elements $\psi_1\tensor\cdots\tensor\psi_r$ such that $\alp_G(\psi_{i_1}\tensor\cdots\tensor\psi_{i_t})=0$
for some $1\leq i_1<\cdots<i_t\leq r$.
Thus $C_{r,t}(G)=0$ for $r<t$.
We define $H^r_{t,\alp}(G)=H^1(G)^{\tensor r}/C_{r,t}(G)$ and  a graded ring
$H^*_{t,\alp}(G)=\bigoplus_{r=0}^\infty H^r_{t,\alp}(G)$.
In particular, $H^0_{t,\alp}(G)=\dbZ/q$.
Since $\alp$ is a natural transformation, $H^*_{t,\alp}$ is a functor.
Note that $\alp_G$ induces a homomorphism $\bar\alp_G^t\colon H^t_{t,\alp}(G)\to A(G)$.

\begin{examples}
\label{examples of alpha bar}
\rm
\begin{enumerate}
\item[(1)]
In Example \ref{examples of cohomological duality triples}(1),
$H^*_{2,\alp}(G)=\widehat{H^*(G)}$.
The map $\bar\alp_G^2\colon H^2_{2,\alp}(G)\to A(G)=H^2(G)_\dec$ is surjective, and is injective if and only if $H^*(G)$
is $2$-quadratic.
\item[(2)]
In Example \ref{examples of cohomological duality triples}(2),
$H^*_{1,\alp}(G)=H_\Bock^*(G)$ and $H^*_{2,\alp}(G)=\widehat{H^*(G)}$.
\item[(3)]
In Example \ref{examples of cohomological duality triples}(3),
$H^*_{1,\alp}(G)=H^*_\Bock(G)$ and the map $\bar\alp_G^1\colon H^1_{1,\alp}(G)=H^1(G)/\Ker(\beta_G)\to A(G)=\Img(\beta_G)$
is an isomorphism.
\end{enumerate}
\end{examples}

\section{Quotients that determine cohomology}
\label{section on quotients that determine cohomology}
This section is devoted to proving Theorem A.
More generally, let $(T,T_0,\alp)$ be a cohomological duality triple.
We show that, assuming the existence of a cover $S\to G$, the quotient $G/T_0(G)$ determines the graded rings
$H^*_{t,\alp}(G)$, $t\geq1$,
and is in fact the minimal such quotient:

\begin{thm}
\label{first main thm}
Assume that there is a cover $S\to G$.
Let $N$ be a normal subgroup of $G$ contained in $T(G)$, and consider the following conditions:
\begin{enumerate}
\item[(a)]
$N\leq T_0(G)$;
\item[(b)]
$\inf_G\colon A(G/N)\to A(G)$ is an isomorphism;
\item[(c)]
$\inf_G\colon H^*_{t,\alp}(G/N)\to H^*_{t,\alp}(G)$ is an isomorphism for every $t$.
\end{enumerate}
Then (a)$\Leftrightarrow$(b)$\Rightarrow$(c).
Moreover, if there exist $r\geq1$ such that
$\bar\alp_G^r\colon H^r_{r,\alp}(G)\to A(G)$ is injective and
$\bar\alp_{G/N}^r\colon H^r_{r,\alp}(G/N)\to A(G/N)$ surjective, then (a)--(c) are equivalent.
\end{thm}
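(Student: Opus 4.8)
The plan is to reduce everything to the duality of Proposition~\ref{generalized basic duality}, applied simultaneously to $G$ and to $G/N$. First I would note that the hypothesis produces a cover of $G/N$ as well: since $N\leq T(G)$, axiom (A3) gives $T(G/N)=T(G)/N$, hence $(G/N)/T(G/N)=G/T(G)$, and the composite $S\to G\to G/N$ is again a cover. Thus (A4) applies to both groups, and writing $\bar A:=A(G/T(G))$ I obtain that $\bar A$ is dual both to $(T(G),T_0(G))$ and to $(T(G/N),T_0(G/N))=(T(G)/N,\,T_0(G)N/N)$. Because $T(G),T(G/N),N\leq G^{(2)}$, inflation is an isomorphism on $H^1$ and hence on each $H^1(\,\cdot\,)^{\tensor r}$; combined with the naturality square for $\alp$ this shows that the inflations $\bar A\to A(G/N)\to A(G)$ are both surjective. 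Setting $L_G=\Ker(\inf\colon\bar A\to H^2(G))$ and $L_{G/N}=\Ker(\inf\colon\bar A\to H^2(G/N))$, the inflation $\bar A\to H^2(G)$ factors through $H^2(G/N)$, so $L_{G/N}\subseteq L_G$; identifying $A(G/N)=\bar A/L_{G/N}$ and $A(G)=\bar A/L_G$, condition~(b) becomes simply $L_{G/N}=L_G$.

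For (a)$\Leftrightarrow$(b) I would locate $L_{G/N}$ inside $L_G$ by means of transgression. The functoriality of the $5$-term sequence for the morphism of extensions $(G,T(G))\to(G/N,T(G)/N)$ yields $\trg_G\circ\inf=\trg_{G/N}$ on $H^1(T(G)/N)^{G/N}$, so that $L_{G/N}=\bar A\cap\trg_G\bigl(\{\chi\in H^1(T(G))^G:\chi|_N=0\}\bigr)$. Since $T(G)\leq G^{(2)}$, every $\psi\in H^1(G)$ restricts to $0$ on $T(G)$; hence $\res\colon H^1(G)\to H^1(T(G))^G$ is zero and, by the $5$-term sequence, $\trg_G$ is injective. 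Therefore each $\varphi\in L_G$ has a unique preimage $\chi_0\in H^1(T(G))^G$; it lies in $K(G)$ since $L_G=\trg_G(K(G))$ by the exact sequence of Proposition~\ref{equivelence for dualities}(b), and $\varphi\in L_{G/N}$ iff $\chi_0|_N=0$. Under the perfect pairing $T(G)/T_0(G)\times L_G\to\dbZ/q$ of condition~(d) of Proposition~\ref{equivelence for dualities} one has $\langle\bar\sig,\varphi\rangle=\chi_0(\sig)$, so $L_{G/N}$ is precisely the annihilator of the image $\bar N$ of $N$ in $T(G)/T_0(G)$. Perfectness then gives $L_{G/N}=L_G\iff\bar N=0\iff N\leq T_0(G)$, which is~(a). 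This transgression/annihilator identification is the technical heart, and the step I expect to be the main obstacle.

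For (b)$\Rightarrow$(c) I would use that, under~(b), $\inf_G\colon A(G/N)\to A(G)$ is injective. The naturality identity $\inf_G\circ\alp_{G/N}=\alp_G\circ\inf_G$ then shows that a pure tensor is annihilated by $\alp_{G/N}$ on a prescribed $t$-subset of its factors if and only if its image under the $H^1$-isomorphism $H^1(G/N)^{\tensor r}\xrightarrow{\sim}H^1(G)^{\tensor r}$ is annihilated by $\alp_G$ on the same subset. Since this isomorphism carries pure tensors to pure tensors, it matches the generating sets of $C_{r,t}(G/N)$ and $C_{r,t}(G)$, whence $\inf_G$ descends to an isomorphism $H^r_{t,\alp}(G/N)\to H^r_{t,\alp}(G)$ for all $r$ and $t$.

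Finally, for the ``moreover'' part it suffices to prove (c)$\Rightarrow$(b) under the stated hypothesis, since (a)$\Leftrightarrow$(b)$\Rightarrow$(c) is already in hand. Taking the given $r$, I would consider the naturality square of $\bar\alp^r$ with vertical inflation maps: by~(c) the map $H^r_{r,\alp}(G/N)\to H^r_{r,\alp}(G)$ is an isomorphism, while $\bar\alp_G^r$ is injective and $\bar\alp_{G/N}^r$ surjective. A short chase then shows that any $a\in A(G/N)$ with $\inf_G(a)=0$ is the image of some $x$ with $\inf_G(x)=0$, forcing $x=0$ and hence $a=0$; thus $\inf_G\colon A(G/N)\to A(G)$ is injective, and being always surjective it is an isomorphism, giving~(b). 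This yields the equivalence of (a)--(c).
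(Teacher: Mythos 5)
Your proof is correct, and its outer skeleton coincides with the paper's: you reduce to the projection $\pi\colon G\to G/N$ by observing that $S\to G\to G/N$ is again a cover (this is exactly how the paper deduces the theorem), you prove (b)$\Rightarrow$(c) by matching the generating pure tensors of $C_{r,t}(G/N)$ and $C_{r,t}(G)$ under the $H^1$-isomorphism using injectivity of $\inf\colon A(G/N)\to A(G)$, and you prove (c)$\Rightarrow$(b) by the chase on the naturality square for $\bar\alp^r$ --- these last two steps are precisely the proof of Proposition \ref{condition e}. Where you genuinely diverge is the core equivalence (a)$\Leftrightarrow$(b). The paper proves it inside Proposition \ref{ppp}, as part of a four-way equivalence for an arbitrary epimorphism under a common cover: the hard direction ($A(\pi)$ mono $\Rightarrow\Ker(\pi)\leq T_0(G_1)$) runs the snake lemma on the two $5$-term sequences to get $K(G_2)\isom K(G_1)$, dualizes this via Proposition \ref{generalized basic duality}(a) into $T(G_1)/T_0(G_1)\isom T(G_2)/T_0(G_2)$, and finishes with a second snake lemma on the extensions $1\to T/T_0\to G/T_0\to G/T\to 1$; the converse direction factors through $A(G_i/T_0(G_i))$ and invokes Proposition \ref{equality of kernels}. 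You instead compute the kernels $L_G$ and $L_{G/N}$ of inflation from $\bar A=A(G/T(G))$ directly: transgression functoriality plus injectivity of $\trg_G$ identify $L_{G/N}$ as the annihilator of the image of $N$ in $T(G)/T_0(G)$ under the perfect pairing of Proposition \ref{equivelence for dualities}(d), and both implications of (a)$\Leftrightarrow$(b) then drop out of perfectness at once. Both arguments rest on the same duality axiom (A4), but your packaging is shorter and more self-contained for the quotient map, and is in fact close in spirit to the paper's own later technique in Lemma \ref{G3 and dual}, where a kernel is likewise identified as an annihilator under the pairing (\ref{rrrr}). What the paper's route buys in exchange is generality: Proposition \ref{ppp} and Proposition \ref{condition e} apply to epimorphisms $G_1\to G_2$ between two different groups with compatible covers, and that extra strength is reused later (Proposition \ref{pro-p}, Corollary \ref{first equivalnce in thmC}, hence Theorem C), whereas your argument as written covers only the case $G_2=G_1/N$.
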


Before we proceed with the proof of Theorem \ref{first main thm},
we apply it to Examples \ref{examples of cohomological duality triples},
to deduce the following special cases.

\begin{cor}
\label{ex quoteints determine cohomology}
Let $G$ be a profinite group with $G^{[2]}\isom(\dbZ/q)^I$ for some $I$.
Let $N$ be a normal subgroup of $G$ contained in $G^{(2)}$.
\begin{enumerate}
\item[(1)]
If $H^*(G)$ is $2$-quadratic, then the following conditions are equivalent:
\begin{enumerate}
\item[(a)]
$N\leq G_{(3)}$;
\item[(b)]
$\inf_G\colon H^2(G/N)_\dec\to H^2(G)_\dec$ is an isomorphism;
\item[(c)]
$\inf_G\colon H^*(G/N)_\dec\to H^*(G)_\dec$ is an isomorphism.
\end{enumerate}
\item[(2)]
The following conditions are equivalent:
\begin{enumerate}
\item[(a)]
$N\leq G^{(3)}$;
\item[(b)]
$\inf_G\colon \Img(\beta_{G/N})+H^2(G/N)_\dec\longrightarrow \Img(\beta_G)+H^2(G)_\dec$ is an isomorphism;
\end{enumerate}
\item[(3)]
The following conditions are equivalent:
\begin{enumerate}
\item[(a)]
$N\leq G^{q^2}[G,G]$;
\item[(b)]
$\inf_G\colon \Img(\beta_{G/N})\to \Img(\beta_G)$ is an isomorphism;
\item[(c)]
$\inf_G\colon H^*(G/N)_\Bock\to H^*(G)_\Bock$ is an isomorphism.
\end{enumerate}
\end{enumerate}
\end{cor}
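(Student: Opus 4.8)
The plan is to obtain the three parts by specializing Theorem~\ref{first main thm} to the three cohomological duality triples of Examples~\ref{examples of cohomological duality triples}(1)--(3), for which axioms (A1)--(A4) have already been checked. In each case a cover $S\to G$ exists by Example~\ref{covers}, because $G^{[2]}\isom(\dbZ/q)^I$, so the hypothesis of Theorem~\ref{first main thm} is in force. For these triples $T_0(G)$ equals $G_{(3)}$, $G^{(3)}$ and $G^{q^2}[G,G]$ respectively, so condition~(a) of the theorem becomes condition~(a) of the respective part; and by Examples~\ref{examples of cohomological duality triples} we have $A(G)=H^2(G)_\dec$, $\Img(\beta_G)+H^2(G)_\dec$ and $\Img(\beta_G)$ respectively, so condition~(b) of the theorem becomes condition~(b) of the respective part. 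Hence (a)$\Leftrightarrow$(b) in all three parts is immediate, and part~(2) is thereby complete.

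To reach the conditions~(c) I would apply the final assertion of Theorem~\ref{first main thm}, which promotes (a)$\Leftrightarrow$(b)$\Rightarrow$(c) to a full equivalence as soon as some degree $r$ makes $\bar\alp^r_G$ injective and $\bar\alp^r_{G/N}$ surjective. For part~(3) take $r=1$: by Example~\ref{examples of alpha bar}(3) the map $\bar\alp^1_G\colon H^1(G)/\Ker(\beta_G)\to\Img(\beta_G)$ is an isomorphism for every profinite group, hence injective for $G$ and surjective for $G/N$, so the theorem gives (a)$\Leftrightarrow$(b)$\Leftrightarrow$(c) with~(c) the isomorphism of all the rings $H^*_{t,\alp}$. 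Since $H^*_{1,\alp}=H^*_\Bock$, the slot $t=1$ is condition~(c) of part~(3), and that slot conversely forces~(b) through the same isomorphism $\bar\alp^1$, so part~(3) is finished. For part~(1) take $r=2$: by Example~\ref{examples of alpha bar}(1) the map $\bar\alp^2_{G/N}$ is automatically surjective, while $\bar\alp^2_G$ is injective precisely because $H^*(G)$ is assumed $2$-quadratic, so the hypothesis is met and the theorem yields an isomorphism of the rings $H^*_{2,\alp}=\widehat{H^*(\cdot)}$.

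The one real step is matching condition~(c) in part~(1), where the theorem delivers an isomorphism of the formal rings $\widehat{H^*(\cdot)}$ but part~(1)(c) demands one of the decomposable rings $H^*(\cdot)_\dec$. I would bridge this using the functorial epimorphism $\omega_G\colon\widehat{H^*(G)}\to H^*(G)_\dec$ and the commutative square it forms with $\omega_{G/N}$ and the two inflations. As $N\leq G^{(2)}$ the map $H^1(G/N)\to H^1(G)$ is an isomorphism, and by~(b) a pair of degree-$1$ classes cups to zero over $G/N$ exactly when it does over $G$; hence the subgroups $C_r$ agree on both sides, inflation already gives surjectivity of $\inf_G\colon H^r(G/N)_\dec\to H^r(G)_\dec$, and under the identification $H^1(G/N)^{\tensor r}\isom H^1(G)^{\tensor r}$ one has $C_r\subseteq\Ker\bigl(H^1(G/N)^{\tensor r}\to H^r(G/N)\bigr)\subseteq\Ker\bigl(H^1(G)^{\tensor r}\to H^r(G)\bigr)$. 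The main obstacle is to upgrade this inclusion to an equality in every degree, which is exactly what injectivity of $\inf_G$ requires and is the assertion that the relations of $H^*(G)_\dec$ are generated in degree~$2$, i.e.\ that $\omega_G$ is an isomorphism. I therefore expect the decisive input to be full quadraticity of $H^*(G)$ (available for $G=G_F$ by Theorem~\ref{absolute Galois groups}(iv)): then both cup-product kernels collapse onto $C_r$, the square forces $\inf_G$ to be an isomorphism in each degree, and (b)$\Rightarrow$(c) follows, the converse (c)$\Rightarrow$(b) being the trivial restriction to degree~$2$.
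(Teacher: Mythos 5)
Your proposal is correct and follows essentially the same route as the paper's proof: everything except (1)(c) is read off from Theorem \ref{first main thm} applied to the three triples of Examples \ref{examples of cohomological duality triples} (covers supplied by Example \ref{covers}, the injectivity/surjectivity inputs by Examples \ref{examples of alpha bar}), and the passage from the isomorphism of $\widehat{H^*(\cdot)}$ to that of $H^*(\cdot)_\dec$ is made through the same commutative square involving $\omega_{G/N}$ and $\omega_G$ that the paper draws.

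Concerning the one point where you hedged: your diagnosis that the decisive input for (1)(b)$\Rightarrow$(1)(c) is \emph{full} quadraticity of $H^*(G)$, not merely $2$-quadraticity, is exactly right, and it is in fact what the paper's own proof uses --- its last step reads ``Since $\omega_G$ is by assumption bijective, the lower inflation is bijective,'' which invokes bijectivity of $\omega_G$ in \emph{every} degree, although the stated hypothesis of (1) is only bijectivity in degree $2$. As you observe, $2$-quadraticity controls the kernel of the cup-product map only in degree $2$, so the square yields injectivity of $\inf_G\colon H^r(G/N)_\dec\to H^r(G)_\dec$ only in those degrees $r$ where $\omega_G^r$ is injective. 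Thus your argument and the paper's establish (b)$\Rightarrow$(c) under the same (full quadraticity) hypothesis; this is harmless for the intended application, since for $G=G_F$ or $G_F(p)$ quadraticity holds by Theorem \ref{absolute Galois groups}(iv) and Remark \ref{GF(p)}. In short, you have not introduced a gap; you have reproduced the paper's argument and made explicit the hypothesis it actually uses.
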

\begin{proof}
Everything follows directly from Theorem \ref{first main thm},
using Example \ref{covers} and Examples \ref{examples of alpha bar}, except for
the equivalence with (1)(c).
Since 1(c) trivially implies 1(b), it suffices to show that 1(b) implies 1(c).
Indeed, by Theorem \ref{first main thm} and 1(b), $\inf_G\colon H^*_{t,\alp}(G/N)\to H^*_{t,\alp}(G)$
is an isomorphism for every $t$.
For $t=2$ this means that $\inf_G\colon \widehat{H^*(G/N)}\to \widehat{H^*(G)}$ is bijective.
Using the functoriality of $\omega$ we see that the square below is commutative:
\[
\xymatrix{
\widehat{H^*(G/N)}\ar[r]^{\inf_G}_{\sim}\ar@{->>}[d]_{\omega_{G/N}} & \widehat{H^*(G)}\ar[d]^{\omega_G}_{\wr} \\
H^*(G/N)_\dec \ar[r]^{\inf_G} & H^*(G)_\dec.
}
\]
Since $\omega_G$ is by assumption bijective, the lower inflation is bijective.
\end{proof}

\begin{proof}[Proof of Theorem A]
In view of Theorem \ref{absolute Galois groups}, part (1) of Corollary \ref{ex quoteints determine cohomology} implies Theorem A.
\end{proof}

Note that in (1) the equivalence (a)$\Leftrightarrow$(b) holds even without the $2$-quadraticness assumption.

For the proof of Theorem \ref{first main thm} we first show:

\begin{prop}
\label{ppp}
Let $\pi_i\colon S\to G_i$, $i=1,2$, be covers  and $\pi\colon G_1\to G_2$ an epimorphism
with $\pi\circ\pi_1=\pi_2$.
The following conditions are equivalent:
\begin{enumerate}
\item[(a)]
the induced map $G_1/T_0(G_1)\to G_2/T_0(G_2)$ is an isomorphism;
\item[(b)]
$\Ker(\pi)\leq T_0(G_1)$;
\item[(c)]
the induced map $A(\pi)\colon A(G_2)\to A(G_1)$ is an isomorphism;
\item[(d)]
the induced map $A(\pi)\colon A(G_2)\to A(G_1)$ is a monomorphism.
\end{enumerate}
\end{prop}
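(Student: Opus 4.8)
The plan is to establish a cycle of implications, using as the central tool the setup of \S\ref{section on duality} applied to the cover $\pi_1\colon S\to G_1$, together with the functoriality of the triple $(T,T_0,\alp)$. Since $\pi_1,\pi_2$ are covers, both $G_1/T(G_1)$ and $G_2/T(G_2)$ are canonically identified with $S/T(S)$, so $\pi$ induces an isomorphism $G_1/T(G_1)\xrightarrow{\sim}G_2/T(G_2)$; denote this common quotient by $\bar G$. Writing $N=\Ker(\pi)$, we have $N\leq T(G_1)$ by this identification, so $\pi$ descends to an epimorphism $T(G_1)/\bigl(T(G_1)^q[T(G_1),G_1]\bigr)\to T(G_2)/\bigl(T(G_2)^q[T(G_2),G_2]\bigr)$ compatible with the perfect pairing (\ref{pairing1}).

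First I would treat (a)$\Leftrightarrow$(b), which is essentially formal. Since $N\leq T(G_1)$ and $T_0(G_2)=\pi(T_0(G_1))$ by (A3), the induced map on $G_1/T_0(G_1)\to G_2/T_0(G_2)$ is surjective with kernel $\bigl(N\cdot T_0(G_1)\bigr)/T_0(G_1)$; this kernel is trivial exactly when $N\leq T_0(G_1)$, giving the equivalence. Next, (c)$\Rightarrow$(d) is trivial. The substantive content lies in the remaining implications (b)$\Rightarrow$(c) and (d)$\Rightarrow$(b), where the duality of condition (A4) does the work.

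For (b)$\Rightarrow$(c): assuming $N\leq T_0(G_1)$, I would use the hypothesis that there is a cover $S\to G_1$, so by (A4) the group $A(\bar G)$ is dual to $(T(G_1),T_0(G_1))$. Now consider the factorization of $\inf$: the map $A(\pi)\colon A(G_2)\to A(G_1)$ is, after identifying both via the common quotient $\bar G$ (both equal $A(\bar G)$ as subgroups obtained by inflating from $\bar G$, since $T(G_i)\leq G_i^{(2)}$ makes the degree-one cohomologies and hence $H^{\tensor*}$ agree), really a statement about the kernels of inflation into $H^2(G_1)$ versus $H^2(G_2)$. Here Proposition \ref{equality of kernels} is decisive: because $A(\bar G)$ is dual to $(T(G_1),T_0(G_1))$, the kernels of $\inf_{G_1/T_0(G_1)}$ and $\inf_{G_1}$ on $A(\bar G)$ coincide, and since $N\leq T_0(G_1)$ the quotient $G_1/T_0(G_1)$ factors through $G_2=G_1/N$, forcing $A(\pi)$ to be an isomorphism. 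The main obstacle I anticipate is bookkeeping the several identifications of $A(G_1),A(G_2),A(\bar G)$ as the same subgroup of the common $H^2$, and checking that $A(\pi)$ is precisely the identity under these identifications rather than merely some isomorphism — this requires the naturality squares of (ii) in \S\ref{section on cohomological duality triples} and the remark that $\pi^*_2(A(G_2))=A(G_1)$.

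Finally, for (d)$\Rightarrow$(b), I would argue contrapositively via the perfect pairing of Proposition \ref{generalized basic duality}(b). If $N\not\leq T_0(G_1)$, pick $\sig\in N\setminus T_0(G_1)$; by perfectness of $\langle\cdot,\cdot\rangle$ on $T(G_1)/T_0(G_1)\times K'(G_1)$ there is a class $\varphi\in K'(G_1)=\Ker\bigl(A(\bar G)\xrightarrow{\inf}H^2(G_1)\bigr)$ (using (A4) and condition (c) of Proposition \ref{equivelence for dualities} to identify $K'$ as this kernel) with $\langle\sig T_0(G_1),\varphi\rangle\neq0$, so $\varphi$ does not inflate to zero over $G_2=G_1/N$. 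This exhibits a nonzero element of $\Ker(A(\pi))$, contradicting the monomorphism assumption; hence $N\leq T_0(G_1)$. I expect this direction to be the genuinely delicate one, since it requires correctly locating $K'$ inside $A(\bar G)$ and verifying that an element killed by inflation to $H^2(G_1)$ but pairing nontrivially with $\sig$ survives to a nonzero kernel element for the map into $A(G_1)$ rather than $H^2(G_1)$; Proposition \ref{equality of kernels} again reconciles these two kernels.
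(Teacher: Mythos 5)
Your handling of (a)$\Leftrightarrow$(b), (c)$\Rightarrow$(d), and (b)$\Rightarrow$(c) is correct and essentially parallel to the paper's: the paper also gets (a)$\Leftrightarrow$(b) formally from (A3), and its proof of (a)$\Rightarrow$(c) is exactly your kernel sandwich via Proposition \ref{equality of kernels} (the paper inserts $A(G_i/T_0(G_i))$ as a middle term, you insert $A(G_1/T_0(G_1))$ using $N\leq T_0(G_1)$ directly; same mechanism, and the two versions are interchangeable once (a)$\Leftrightarrow$(b) is known).

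The gap is in (d)$\Rightarrow$(b). Having chosen $\sig\in N\setminus T_0(G_1)$ and $\varphi\in\Ker\bigl(A(\bar G)\xrightarrow{\inf}H^2(G_1)\bigr)$ with $\langle\sig T_0(G_1),\varphi\rangle\neq0$, you assert ``so $\varphi$ does not inflate to zero over $G_2$''. That assertion is the entire content of the implication, and it does not follow from anything you have set up; in particular, Proposition \ref{equality of kernels}, which you invoke to ``reconcile these two kernels'', is not the relevant tool (the kernel of $A(\pi)\colon A(G_2)\to A(G_1)$ is tautologically $\Ker(\pi^*_2)\cap A(G_2)$, since $A(G_1)\subseteq H^2(G_1)$ — there is nothing there to reconcile). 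What actually needs proving is: if $\inf_{\bar G\to G_2}(\varphi)=0$, then by the 5-term sequence for $(G_2,T(G_2))$ one has $\varphi=\trg_2(\psi_2)$ for some $\psi_2\in H^1(T(G_2))^{G_2}$; by the functoriality of the 5-term sequence with respect to $\pi$ (as in (\ref{5 term sequence})) and the injectivity of $\trg_1$ on $H^1(T(G_1))^{G_1}$ (which holds because $T(G_1)\leq G_1^{(2)}$), we get $\trg_1\inv(\varphi)=\psi_2\circ\pi|_{T(G_1)}$, whence $\langle\sig T_0(G_1),\varphi\rangle=\psi_2(\pi(\sig))=\psi_2(1)=0$, a contradiction. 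Note that this step genuinely uses data about $G_2$ (its 5-term sequence, equivalently the cover $\pi_2$ and (A4) for $G_2$), which your write-up never invokes in this direction: pairing nontrivially against an element of $N$ says nothing about inflation over $G_2$ until the transgressions of $G_1$ and $G_2$ have been compared. Once this is supplied, your element-wise contrapositive is a valid alternative to the paper's route, which proves (d)$\Rightarrow$(a) structurally: it runs the snake lemma over the commutative diagram of the two restricted 5-term sequences (using (A4) for both $G_i$), dualizes the resulting isomorphism $K(G_2)\xrightarrow{\sim}K(G_1)$ into an isomorphism $T(G_1)/T_0(G_1)\xrightarrow{\sim}T(G_2)/T_0(G_2)$ via Proposition \ref{generalized basic duality}(a), and then applies the snake lemma once more to the sequences $1\to T(G_i)/T_0(G_i)\to G_i/T_0(G_i)\to G_i/T(G_i)\to1$.
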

\begin{proof}
(a)$\Rightarrow$(b), (c)$\Rightarrow$(d):\quad
Trivial.

\medskip

(b)$\Rightarrow$(a):\quad
By (A3), $T_0(G_2)=\pi(T_0(G_1))$.
Hence the kernel of the induced epimorphism
$G_1/T_0(G_1)\to G_2/T_0(G_2)$ is $\Ker(\pi)T_0(G_1)/T_0(G_1)$, which is trivial by (b).

\medskip

(d)$\Rightarrow$(a):\quad
The map $\bar\pi\colon G_1/T(G_1)\to G_2/T(G_2)$ induced by $\pi$ is an isomorphism.
Hence so is $\pi^*_1\colon H^1(G_2)\to H^1(G_1)$.
By (A4), $\pi$ induces a commutative diagram with exact rows
\[
\xymatrix{
0\ar[r] & K(G_2)\ar[r]^{\trg\qquad}\ar[d] & A(G_2/T(G_2))\ar[r]^{\inf}\ar[d]^{\wr}_{A(\bar\pi)} &
          \ A(G_2)\ \ar@{>->}[d]^{A(\pi)} \\
0\ar[r] & K(G_1)\ar[r]^{\trg\qquad} & A(G_1/T(G_1))\ar[r]^{\inf} & A(G_1).
}
\]
By the assumptions and the snake lemma, the left vertical map is an isomorphism.
Passing to duals using Proposition \ref{generalized basic duality}(a), we obtain that the induced map $T(G_1)/T_0(G_1)\to T(G_2)/T_0(G_2)$ is an isomorphism.
Now apply the snake lemma for the commutative diagram with exact rows
\[
\xymatrix{
1\ar[r]& T(G_1)/T_0(G_1)\ar[r]\ar[d]_{\wr} & G_1/T_0(G_1)\ar[r]\ar[d] & G_1/T(G_1)\ar [r]\ar[d]^{\wr} & 1\qquad \\
1\ar[r]& T(G_2)/T_0(G_2)\ar[r] & G_2/T_0(G_2)\ar[r] & G_2/T(G_2)\ar [r] & 1.
}
\]

\medskip

(a)$\Rightarrow$(c):\quad
Consider the commutative diagram
\[
\xymatrix{
A(G_2/T(G_2))\ar[r]^{\inf}\ar[d] & A(G_2/T_0(G_2))\ar[r]^{\qquad\inf}\ar[d] & A(G_2)\ar[d]^{A(\pi)} \\
A(G_1/T(G_1))\ar[r]^{\inf} & A(G_1/T_0(G_1))\ar[r]^{\qquad\inf} & A(G_1),
}
\]
where the vertical maps are induced by $\pi$.
(a) implies that the middle vertical map is an isomorphism.
Since $T_0(G_i)\leq T(G_i)\leq G_i^{(2)}$, the inflations $H^1(G_i/T(G_i))\to H^1(G_i/T_0(G_i))\to H^1(G_i)$
are isomorphisms, $i=1,2$.
Hence the horizontal inflation maps are surjective.
By Proposition \ref{equality of kernels}, in each row the kernel of the left inflation map equals
the kernel of the composed inflation map.
It follows that $A(\pi)$ is an isomorphism.
\end{proof}

\begin{prop}
\label{condition e}
In the setup of Proposition \ref{ppp}, conditions (a)--(d) imply:
\begin{enumerate}
\item[(e)]
$\pi$ induces for every $t\geq1$ an isomorphism $\pi^*\colon H^*_{t,\alp}(G_2)\to H^*_{t,\alp}(G_1)$.
\end{enumerate}
Moreover, if there exists $r\geq1$ with
$\bar\alp_{G_1}^r\colon H^r_{r,\alp}(G_1)\to A(G_1)$
injective and $\bar\alp_{G_2}^r\colon H^r_{r,\alp}(G_2)\to A(G_2)$ surjective,
then (e) for $t=r$ is equivalent to (a)--(d).
\end{prop}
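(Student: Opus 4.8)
The plan is to reduce everything to two facts: the naturality of $\alp$, and the observation---already recorded in the proof of Proposition \ref{ppp}---that under the hypotheses of that proposition the map $\pi^*_1\colon H^1(G_2)\to H^1(G_1)$ is automatically an isomorphism, since $\pi$ induces an isomorphism $G_1/T(G_1)\to G_2/T(G_2)$ and, as $T(G_i)\leq G_i^{(2)}$, the inflations $H^1(G_i/T(G_i))\to H^1(G_i)$ are isomorphisms. Consequently $(\pi^*_1)^{\tensor r}\colon H^1(G_2)^{\tensor r}\to H^1(G_1)^{\tensor r}$ is an isomorphism for every $r$, and the entire content of (e) is that this map carries $C_{r,t}(G_2)$ exactly onto $C_{r,t}(G_1)$, so as to descend to $H^r_{t,\alp}$.

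For (a)--(d)$\Rightarrow$(e) I would exploit the commutative square expressing that $\alp$ is a natural transformation, which for $\psi_{i_1}\tensor\cdots\tensor\psi_{i_t}\in H^1(G_2)^{\tensor t}$ reads
\[
\alp_{G_1}(\pi^*_1\psi_{i_1}\tensor\cdots\tensor\pi^*_1\psi_{i_t})=\pi^*_2\bigl(\alp_{G_2}(\psi_{i_1}\tensor\cdots\tensor\psi_{i_t})\bigr),
\]
where $\pi^*_2$ restricts to $A(\pi)\colon A(G_2)\to A(G_1)$ on $\Img(\alp_{G_2})$. The inclusion $(\pi^*_1)^{\tensor r}(C_{r,t}(G_2))\subseteq C_{r,t}(G_1)$ is immediate from this identity applied to a defining generator of $C_{r,t}(G_2)$. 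For the reverse inclusion I would take a defining generator $\varphi_1\tensor\cdots\tensor\varphi_r$ of $C_{r,t}(G_1)$, with $\alp_{G_1}(\varphi_{i_1}\tensor\cdots\tensor\varphi_{i_t})=0$ for some $i_1<\cdots<i_t$, and pull it back through $(\pi^*_1)^{\tensor r}$ to the pure tensor $\psi_1\tensor\cdots\tensor\psi_r$ with $\psi_j=(\pi^*_1)\inv(\varphi_j)$. The displayed identity converts the vanishing into $A(\pi)\bigl(\alp_{G_2}(\psi_{i_1}\tensor\cdots\tensor\psi_{i_t})\bigr)=0$, and here condition (d)---that $A(\pi)$ is a monomorphism---forces $\alp_{G_2}(\psi_{i_1}\tensor\cdots\tensor\psi_{i_t})=0$, so the preimage lies in $C_{r,t}(G_2)$. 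Thus $(\pi^*_1)^{\tensor r}$ maps $C_{r,t}(G_2)$ onto $C_{r,t}(G_1)$, and descends to the required isomorphism $H^r_{t,\alp}(G_2)\to H^r_{t,\alp}(G_1)$.

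For the converse I would assume (e) together with an $r$ for which $\bar\alp_{G_1}^r$ is injective and $\bar\alp_{G_2}^r$ surjective, and chase the commutative square
\[
\xymatrix{
H^r_{r,\alp}(G_2)\ar[r]^{\bar\alp_{G_2}^r}\ar[d]_{\pi^*} & A(G_2)\ar[d]^{A(\pi)} \\
H^r_{r,\alp}(G_1)\ar[r]^{\bar\alp_{G_1}^r} & A(G_1),
}
\]
which commutes by naturality of $\alp$ and whose left vertical map is an isomorphism by (e) (with $t=r$). If $A(\pi)(a)=0$, then writing $a=\bar\alp_{G_2}^r(x)$ by surjectivity gives $\bar\alp_{G_1}^r(\pi^*x)=0$, hence $\pi^*x=0$ by injectivity of $\bar\alp_{G_1}^r$, hence $x=0$ since $\pi^*$ is an isomorphism, so $a=0$. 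Thus $A(\pi)$ is a monomorphism, which is condition (d), and Proposition \ref{ppp} then supplies (a)--(c).

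I expect the main obstacle to be conceptual rather than computational: one must see that the two-sided control of the subgroups $C_{r,t}$, whose definition carries an existential quantifier over $t$-element index subsets, collapses entirely to the injectivity of the single map $A(\pi)$, which is precisely what condition (d) furnishes. Once this is isolated, every step is a formal consequence of the naturality of $\alp$ and of the fact that $\pi^*_1$ is already an isomorphism in degree $1$; no further appeal to the cover or to axioms (A1)--(A4) is needed beyond what already enters through Proposition \ref{ppp}.
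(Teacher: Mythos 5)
Your proposal is correct and follows essentially the same route as the paper's proof: the same naturality square for $\alp$ together with the degree-one isomorphism $\pi^*_1$ to show that condition (d) makes $(\pi^*_1)^{\tensor r}$ carry $C_{r,t}(G_2)$ bijectively onto $C_{r,t}(G_1)$, and the same diagram chase through $\bar\alp^r_{G_1}$, $\bar\alp^r_{G_2}$ to recover (d) from (e). The only difference is that you spell out the two inclusions for the $C_{r,t}$ subgroups, which the paper leaves implicit.
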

\begin{proof}
Since the induced map $G_1^{[2]}\to G_2^{[2]}$ is an isomorphism, so is $\pi_1^*\colon H^1(G_2)\to H^1(G_2)$,
and we obtain a commutative square
\[
\xymatrix{
H^1(G_2)^{\tensor t} \ar[r]^{(\pi^*_1)^{\tensor t}}_{\sim}\ar[d]_{\alp_{G_2}} & H^1(G_1)^{\tensor t}\ar[d]^{\alp_{G_1}}\\
A(G_2) \ar[r]^{A(\pi)} & A(G_1).
}
\]
Assuming (d),  $(\pi^*_1)^{\tensor t}$ maps $C_{r,t}(G_2)$ bijectively onto $C_{r,t}(G_1)$ for every $t\geq1$,
and therefore $\pi^*\colon H^*_{t,\alp}(G_2)\to H^*_{t,\alp}(G_1)$ is an isomorphism.

For the last assertion, assume (e) and consider the commutative square
\[
\xymatrix{
H^r_{r,\alp}(G_2) \ar@{->>}[d]_{\bar\alp_{G_2}^r}\ar[r]^{\pi^*_r}_{\sim} &
H^r_{r,\alp}(G_1)\ar@{>->}[d]^{\bar\alp_{G_1}^r} \\
A(G_2)\ar[r]^{A(\pi)} & A(G_1).
}
\]
The assumptions imply that $A(\pi)$ is injective.
\end{proof}

\begin{proof}[Proof of Theorem \ref{first main thm}]
As $N\leq T(G)$, (A3) gives $T(G/N)=T(G)/N$.
Therefore the composed map $S\to G\to G/N$ is a cover.
Now apply Propositions \ref{ppp} and \ref{condition e} for the projection $\pi\colon G\to G/N$.
\end{proof}

\section{Isomorphisms}
\label{section on isomorphisms}
We now apply the results of \S\ref{section on quotients that determine cohomology} to the case of pro-$p$ groups.

\begin{prop}
\label{pro-p}
Let $(T,T_0,\alp)$ be a cohomological duality triple.
Let $\pi_i\colon S\to G_i$, $i=1,2$, be covers  and $\pi\colon G_1\to G_2$ an epimorphism of pro-$p$ groups
with $\pi\circ\pi_1=\pi_2$.
Suppose that $A(G_2)=H^2(G_2)$.
Then $\pi$ is an isomorphism if and only if the induced map $G_1/T_0(G_1)\to G_2/T_0(G_2)$ is an isomorphism.
\end{prop}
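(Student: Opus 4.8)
The forward implication is immediate: if $\pi$ is an isomorphism then so is the induced map on the quotients by $T_0$, by the functoriality in (A3) (this is also the trivial implication (b)$\Rightarrow$(a) of Proposition \ref{ppp}). So the content is the converse, and the plan is to assume that $G_1/T_0(G_1)\to G_2/T_0(G_2)$ is an isomorphism and deduce that $N=\Ker(\pi)$ is trivial. By Proposition \ref{ppp} this hypothesis is equivalent to $N\leq T_0(G_1)$ (condition (b)) and to $A(\pi)\colon A(G_2)\to A(G_1)$ being a monomorphism (condition (d)); in particular $N\leq T_0(G_1)\leq G_1^{(2)}$. I would extract two cohomological facts about $\pi$ and feed them into the five-term exact sequence of $1\to N\to G_1\to G_2\to1$.

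First, since $\pi_1,\pi_2$ are covers commuting with $\pi$, the induced map $G_1/T(G_1)\to G_2/T(G_2)$ is an isomorphism (both sides being identified with $S/T(S)$, exactly as at the start of the proof of Proposition \ref{ppp}). Because $T(G_i)\leq G_i^{(2)}$, inflation identifies $H^1(G_i/T(G_i))$ with $H^1(G_i)$, so $\pi^*_1\colon H^1(G_2)\to H^1(G_1)$ is an isomorphism. Second, $A(\pi)$ is the restriction of $\pi^*_2$ to $A(G_2)$, and by hypothesis $A(G_2)=H^2(G_2)$; since $A(\pi)$ is a monomorphism, $\pi^*_2\colon H^2(G_2)\to H^2(G_1)$ is injective.

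Next I would feed these into the five-term sequence
\[
0\to H^1(G_2)\xrightarrow{\inf}H^1(G_1)\xrightarrow{\res}H^1(N)^{G_1}\xrightarrow{\trg}H^2(G_2)\xrightarrow{\inf}H^2(G_1).
\]
As $\inf=\pi^*_1$ is surjective, $\res=0$, so $\trg$ is injective; and as $\inf=\pi^*_2$ is injective, $\Img(\trg)=\Ker(\pi^*_2)=0$. Hence $H^1(N)^{G_1}=0$. Applying the perfect duality (\ref{pairing1}) with $T=N\leq G_1^{(2)}$ then forces $N/N^q[N,G_1]=0$, that is, $N=N^q[N,G_1]$, and since $N^q\leq N^p$ this upgrades to $N=N^p[N,G_1]$.

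Finally, $N$ is a closed normal subgroup of the pro-$p$ group $G_1$ satisfying $N=N^p[N,G_1]$, and a standard topological Nakayama argument for pro-$p$ groups gives $N=1$: the action of $G_1$ on the nonzero Frattini quotient $N/N^p[N,N]$ would have to have vanishing coinvariants, which is impossible unless that quotient is zero, whence $N=\Phi(N)$ and so $N=1$; thus $\pi$ is an isomorphism. The one genuinely nontrivial step is this last passage from $H^1(N)^{G_1}=0$ to $N=1$. The cohomological vanishing only pins $N$ down modulo $N^q[N,G_1]$, and it is the interplay of the perfect duality (\ref{pairing1}) with the pro-$p$ Nakayama lemma, together with the harmless reduction from $q$ to $p$, that promotes this to the triviality of $N$.
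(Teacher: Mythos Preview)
Your proof is correct and follows essentially the same route as the paper's: both establish that $\pi^*_1$ is an isomorphism (via the common cover $S$) and that $\pi^*_2$ is injective (via Proposition \ref{ppp} and the hypothesis $A(G_2)=H^2(G_2)$). The only difference is in the final step: the paper simply invokes \cite{Serre65}*{Lemma 2}, which states that an epimorphism $\pi$ of pro-$p$ groups is an isomorphism precisely when $\pi^*_1$ is bijective and $\pi^*_2$ is injective, whereas you unpack this lemma inline via the five-term sequence, the duality (\ref{pairing1}), and the pro-$p$ Nakayama argument. Your argument is exactly how Serre's lemma is proved, so the two proofs coincide once the citation is expanded.
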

\begin{proof}
The ``only if" part follows from (A3).
For the ``if" part, recall that by \cite{Serre65}*{Lemma 2}, $\pi$ is an isomorphism if and only if
$\pi^*_r\colon H^r(G_2)\to H^r(G_1)$ is an isomorphism for $r=1$ and a monomorphism for $r=2$.
As $\pi_1^*$ commutes with the isomorphisms $H^1(G_i)\to H^1(S)$, $i=1,2$, it is also an isomorphism.
Since the induced map $G_1/T_0(G_1)\to G_2/T_0(G_2)$ is an isomorphism and by Proposition \ref{ppp},
$\pi^*_2\colon A(G_2)=H^2(G_2)\to H^2(G_1)$ is a monomorphism.
Hence $\pi$ is an isomorphism.
\end{proof}

We deduce the following strengthening of \cite{CheboluEfratMinac}*{Remark 6.4, Th.\ D}
(which deal with the quotients $G_i^{[3]}$):

\begin{cor}
\label{first equivalnce in thmC}
Let $\pi\colon G_1\to G_2$ be an epimorphism of pro-$p$ groups inducing an isomorphism
$\pi^{[2]}\colon G_1^{[2]}\xrightarrow{\sim}G_2^{[2]}\isom(\dbZ/q)^I$
for some $I$, and such that  $H^2(G_2)=H^2(G_2)_\dec$.
Then $\pi$ is an isomorphism if and only if the induced map $\pi_{[3]}\colon (G_1)_{[3]}\to (G_2)_{[3]}$ is an isomorphism.
\end{cor}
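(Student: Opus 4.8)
The plan is to specialize Proposition \ref{pro-p} to the cohomological duality triple of Example \ref{examples of cohomological duality triples}(1), namely $T(G)=G^{(2)}$, $T_0(G)=G_{(3)}$, and $\alp_G$ the cup product in degree $2$ (trivial otherwise), so that $A(G)=H^2(G)_\dec$. For this triple one has $G/T_0(G)=G_{[3]}$, and the map induced by $\pi$ on these quotients is precisely $\pi_{[3]}$. Thus, once the hypotheses of Proposition \ref{pro-p} are verified, its conclusion reads exactly as the assertion to be proved.

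The one point requiring actual work is producing a \emph{common} cover through which $\pi$ factors, since Proposition \ref{pro-p} demands covers $\pi_i\colon S\to G_i$ of the \emph{same} group $S$ with $\pi\circ\pi_1=\pi_2$. As $G_1$ is a pro-$p$ group with $G_1^{[2]}\isom(\dbZ/q)^I$, Example \ref{covers} furnishes a cover $\pi_1\colon S\to G_1$, where $S$ may be taken to be a free pro-$p$ group of rank $|I|$; in particular $H^2(S)=0$ and $\pi_1^{[2]}\colon S^{[2]}\to G_1^{[2]}$ is an isomorphism. I then set $\pi_2=\pi\circ\pi_1\colon S\to G_2$, which is an epimorphism, and check it is a cover of $G_2$: indeed $H^2(S)=0$ still holds, while the induced map $S^{[2]}\to G_2^{[2]}$ is the composite $\pi^{[2]}\circ\pi_1^{[2]}$ of two isomorphisms, the second by the hypothesis on $\pi$. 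This is the step where the assumption that $\pi$ induces an isomorphism on the $G^{[2]}$-quotients is used essentially: it is exactly what permits a single free pro-$p$ group $S$ to cover $G_1$ and $G_2$ compatibly with $\pi$.

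With $\pi_1,\pi_2$ in hand, the remaining hypothesis $H^2(G_2)=H^2(G_2)_\dec$ is precisely the condition $A(G_2)=H^2(G_2)$ required by Proposition \ref{pro-p}. Applying that proposition to the epimorphism $\pi$ of pro-$p$ groups then gives that $\pi$ is an isomorphism if and only if the induced map $G_1/T_0(G_1)\to G_2/T_0(G_2)$ is an isomorphism, which is exactly the statement that $\pi_{[3]}\colon (G_1)_{[3]}\to (G_2)_{[3]}$ is an isomorphism. I expect the verification that $\pi_2=\pi\circ\pi_1$ is a cover of $G_2$ to be the only genuine obstacle; everything else is read off directly from Proposition \ref{pro-p}, and this short verification is precisely where the two hypotheses on $\pi$ (surjectivity and the induced isomorphism on $G^{[2]}$) combine to reduce the statement to the abstract duality machinery.
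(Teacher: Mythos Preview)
Your proposal is correct and follows essentially the same approach as the paper: both specialize Proposition \ref{pro-p} to the triple of Example \ref{examples of cohomological duality triples}(1), construct a cover $\pi_1\colon S\to G_1$ with $S$ free pro-$p$, and set $\pi_2=\pi\circ\pi_1$. The paper is slightly more explicit in building $S$ via a choice of compatible bases and the Frattini argument, whereas you simply invoke Example \ref{covers}; the content is the same.
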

\begin{proof}
Choose bases (i.e., generating subsets converging to $1$) $\bar Z_i$ of $G_i^{[2]}$, $i=1,2$,
such that $\pi^{[2]}(\bar Z_1)=\bar Z_2$.
Lift $\bar Z_1$ to a subset $Z_1$ of $G_1$, and let $Z_2=\pi(Z_1)$.
By the Frattini argument, $Z_1,Z_2$ generate $G_1,G_2$, respectively.
Let $S$ be a free pro-$p$ group with basis $Z_1$.
Let $\pi_1\colon S\to G_1$ be the natural epimorphism, let $\pi_2=\pi\circ\pi_1$, and note that $\pi_1,\pi_2$ are covers.
Now take the triple of Example \ref{examples of cohomological duality triples}(1) and apply Proposition \ref{pro-p}.
\end{proof}

\begin{proof}[Proof of Theorem C]
In the first equivalence, the ``only if" part is immediate.
For the ``if" part note that, by Remark \ref{GF(p)}, $G_{F_1}(p)^{[2]}\isom G_{F_2}(p)^{[2]}\isom(\dbZ/q)^I$ for some set $I$.
Hence we may apply Corollary 6.2.

For the second equivalence, consider the cohomological duality triple of Example \ref{examples of cohomological duality triples}(1), and let $G_i=G_{F_i}(p)$, $i=1,2$.
Then $\bar\alp^2_{G_1}\colon H^2_{2,\alp}(G_1)\to H^2_\dec(G_1)$ is injective and $\bar\alp^2_{G_2}\colon H^2_{2,\alp}(G_2)\to H^2_\dec(G_2)$ is surjective (see Example \ref{examples of alpha bar}(1)).
Hence we may apply Proposition \ref{condition e} with $r=2$ to conclude that the induced map $\pi_{[3]}\colon (G_1)_{[3]}\to (G_2)_{[3]}$ is an isomorphism if and only if the map $\pi^*\colon H^*_{2,\alp}(G_2)\to H^*_{2,\alp}(G_1)$ is an isomorphism.
It remains to recall that $H^*_{2,\alp}(G_i)=\widehat{H^*(G_i)}=H^*(G_i)$, by Example \ref{examples of alpha bar}(1) and the fact that $H^*(G_i)$ is quadratic.
\end{proof}

Next Corollary \ref{ex quoteints determine cohomology}(1) and Remark \ref{GF(p)}
give the following refinement of \cite{CheboluEfratMinac}*{Prop.\ 9.1}.

\begin{cor}
\label{only one Galois group}
Let $G_1,G_2$ be profinite groups such that $(G_1)_{[3]}\isom (G_2)_{[3]}$ but $H^*(G_1)\not\isom H^*(G_2)$.
Then at most one of $G_1,G_2$ can be isomorphic to the maximal pro-$p$ Galois group $G_F(p)$ of a field $F$
containing a root of unity of order $q$.
\end{cor}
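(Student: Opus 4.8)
The plan is to prove the contrapositive-style statement by assuming both $G_1$ and $G_2$ are isomorphic to maximal pro-$p$ Galois groups $G_{F_1}(p)$ and $G_{F_2}(p)$ of fields containing a $q$th root of unity, and then deriving that $H^*(G_1)\isom H^*(G_2)$, contradicting the hypothesis. First I would record the standing hypothesis that there is an isomorphism $\theta\colon (G_1)_{[3]}\xrightarrow{\sim}(G_2)_{[3]}$. The whole point is to convert this isomorphism of these small quotients into an isomorphism of the full decomposable cohomology rings, and then use Theorem \ref{absolute Galois groups}(iii) to identify $H^*(G_i)_\dec$ with $H^*(G_i)$.

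The key tool is Corollary \ref{ex quoteints determine cohomology}(1), which I would apply to each $G_i$ separately with $N=1$ being inadequate — rather, the right move is to observe that the cohomology ring $H^*(G_i)_\dec$ is already determined by the quotient $(G_i)_{[3]}=G_i/(G_i)_{(3)}$. Concretely, by Remark \ref{GF(p)} and Theorem \ref{absolute Galois groups}, each $G_i=G_{F_i}(p)$ satisfies $G_i^{[2]}\isom(\dbZ/q)^{I_i}$ and $H^*(G_i)$ is ($2$-)quadratic and equals $H^*(G_i)_\dec$. Applying Corollary \ref{ex quoteints determine cohomology}(1) with $N=(G_i)_{(3)}$ (so $N\leq G_{(3)}$ trivially holds), the inflation $\inf_{G_i}\colon H^*((G_i)_{[3]})_\dec\to H^*(G_i)_\dec$ is an isomorphism. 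Thus $H^*(G_i)\isom H^*(G_i)_\dec\isom H^*((G_i)_{[3]})_\dec$, and the right-hand side depends only on the isomorphism type of the finite-level quotient $(G_i)_{[3]}$.

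Now I would use the given isomorphism $(G_1)_{[3]}\isom(G_2)_{[3]}$ together with the functoriality of the decomposable cohomology ring: an isomorphism of profinite groups induces an isomorphism on $H^*(\cdot)_\dec$. Composing the three isomorphisms
\[
H^*(G_1)\isom H^*((G_1)_{[3]})_\dec\isom H^*((G_2)_{[3]})_\dec\isom H^*(G_2)
\]
yields $H^*(G_1)\isom H^*(G_2)$, contradicting the hypothesis that $H^*(G_1)\not\isom H^*(G_2)$. Hence at most one of $G_1,G_2$ can be such a maximal pro-$p$ Galois group.

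The main obstacle, and the step requiring care, is the clean application of Corollary \ref{ex quoteints determine cohomology}(1): one must confirm that its hypotheses (namely $G^{[2]}\isom(\dbZ/q)^I$ and $2$-quadraticness of $H^*(G)$) are genuinely available for $G_i=G_{F_i}(p)$, which is exactly what Remark \ref{GF(p)} and Theorem \ref{absolute Galois groups} supply. The remaining subtlety is that the isomorphism $H^*(G_i)_\dec\isom H^*((G_i)_{[3]})_\dec$ is an isomorphism of \emph{graded rings} and not merely of graded abelian groups, so that the transported isomorphism respects the cup product; this is guaranteed because $\inf$ is a ring homomorphism, so the identification carries the ring structure along and the final composite is an isomorphism of cohomology rings, as required.
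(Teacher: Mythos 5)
Your proposal is correct and is essentially the paper's own argument: the paper deduces this corollary directly from Corollary \ref{ex quoteints determine cohomology}(1) and Remark \ref{GF(p)} (with Theorem \ref{absolute Galois groups} supplying $H^*(G_i)=H^*(G_i)_\dec$ and quadraticness), exactly as you spell out via the chain $H^*(G_1)\isom H^*((G_1)_{[3]})_\dec\isom H^*((G_2)_{[3]})_\dec\isom H^*(G_2)$. The only micro-detail left implicit is that $(G_i)_{(3)}\leq G_i^{(2)}$, which is needed to invoke the corollary and is immediate from the definition of $G_{(3)}$.
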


As in \cite{CheboluEfratMinac}*{\S9}, Corollary \ref{only one Galois group} can be used to show
that various pro-$p$ groups do not occur as $G_F(p)$ for $F$ as above.

\begin{examples}
\label{examples}
\rm
We assume that $q=p$ is prime.

\medskip

(1)\quad
Let $S$ be a free pro-$p$ group and $R$  a normal subgroup of $S$
such that $R\leq S_{(3)}$ and $S\not\isom S/R$.
Take in Corollary \ref{only one Galois group} $G_1=S$ and $G_2=S/R$.
Then $(G_1)_{[3]}\isom (G_2)_{[3]}$ and $H^1(G_1)\isom H^1(G_2)$  (as $R\leq S^{(2)}$).
Hence $G_1,G_2$ have the same rank \cite{NeukirchSchmidtWingberg}*{Prop.\ 3.9.1}.
Since a free pro-$p$ group is determined by its rank, $G_2$  is not free pro-$p$.
Therefore $H^2(S)=0\neq H^2(G_2)$  \cite{NeukirchSchmidtWingberg}*{Cor. 3.9.5}.
Now $G_1$ is realizable as an absolute Galois group of a field
of characteristic $\neq p$ \cite{FriedJarden}*{Cor.\ 23.1.2}, and such a field automatically
contains a $p$th root of unity.
By Corollary \ref{only one Galois group}
$G_2\not\isom G_F(p)$ for any field $F$ containing a $p$th root of unity.

\medskip
(2)\quad
Take in (1)  $G_1=S=\dbZ_p$ and $R=(\dbZ_p)_{(3)}=\del p\dbZ_p$
(with $\del$ as in \S\ref{section on cohomology}).
Thus $G_2=S/R=\dbZ/4$ for $p=2$, and $G_2=\dbZ/p$ for $p$ odd.
Consequently, $G_2\not\isom G_F(p)$ for any field $F$ containing a $p$th root of unity.
We recover Becker's generalization of the classical Artin--Schreier theorem \cite{Becker74}:
the order of an element in $G_F(p)$ can be only $1$, $2$, or $\infty$.
Observe that if one used \cite{CheboluEfratMinac}*{\S9} instead of Corollary  \ref{only one Galois group}, one would get only that, for $p$ odd, $\dbZ/p^2$ is not a maximal pro-$p$ Galois group of a field as above.
\end{examples}

\section{Quotients determined by cohomology}
\label{section quotients determined by cohomology}
In this section we prove a partial converse of Theorem \ref{first main thm},
saying that for a cohomological duality triple $(T,T_0,\alp)$, and assuming the existence of covers,
$G/T_0(G)$ is determined by $\alp_G$ and $G/T(G)$ (Theorem \ref{second main thm}).
In particular, this will prove Theorem B.

First consider a cover $\pi\colon S\to G$.
Let $R=\Ker(\pi)$.
Then $R\leq T(S)$.
In view of Proposition \ref{generalized basic duality}(a), there is a commutative diagram of perfect (substitution) pairings
\begin{equation}
\label{rrrr}
\xymatrix{
R/R^q[R,S] \ar[d]_{\iota} & *-<3pc>{\times} & H^1(R)^S \ar[r]  & \dbZ/q \ar@{=}[d]\\
T(S)/T_0(S) & *-<3pc>{\times} &  K(S) \ar[r]\ar[u]_{\res_R} & \dbZ/q.
}
\end{equation}
Also, there is a commutative diagram with exact rows
\begin{equation}
\label{utysz}
\xymatrix{
1\ar[r] &  R\ar[r]\ar@{^{(}->}[d] &  S \ar[r]^{\pi}\ar@{=}[d] & G\ar[r]\ar@{->>}[d] & 1 \\
1 \ar[r] & T(S) \ar[r] & S \ar[r] & S/T(S) \ar[r] & 1.
}
\end{equation}
Since $R\leq T(S)\leq S^{(2)}$, the inflation maps $H^1(G)\to H^1(S)$, $H^1(S/T(S))\to H^1(S)$ are isomorphisms.
By the $5$-term sequence and as $H^2(S)=0$, the two transgression maps arising from (\ref{utysz}) are therefore isomorphisms.
By the functoriality of transgression \cite{EfratMinac11}*{(2.2)}, they commute.
We get a commutative diagram
\begin{equation}
\label{gzzsk}
\xymatrix{
K(S) \ar@{^{(}->}[r]\ar[d]_{\trg}^{\wr}  & H^1(T(S))^S \ar[r]^{\res}\ar[d]_{\trg}^{\wr} & H^1(R)^S \ar[d]^{\trg}_{\wr} \\
A(S/T(S))\ar@{^{(}->}[r] & H^2(S/T(S)) \ar[r]^{\quad\inf} & H^2(G)\\
}
\end{equation}
where the left isomorphism is by (A4).
Let $g$ be the composite map
$K(S)\to H^2(G)$ arising from this diagram.
Let $\Ker(g)^\vee$ denote the annihilator of $\Ker(g)$ in $T(S)/T_0(S)$ under the lower pairing of (\ref{rrrr}).

\begin{lem}
\label{G3 and dual}
$G/T_0(G)\isom (S/T_0(S))/\Ker(g)^\vee$.
\end{lem}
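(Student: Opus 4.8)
The goal is to identify the kernel of the composite epimorphism $S \to G \to G/T_0(G)$ with the subgroup $\Ker(g)^\vee$ of $S/T_0(S)$, so that the two quotients of $S$ agree. Since $\pi\colon S \to G$ has kernel $R$, the quotient $G/T_0(G)$ equals $(S/R)/(T_0(S/R))$, and by (A3) applied to the epimorphism $\pi$ we have $T_0(G) = \pi(T_0(S))$, so $G/T_0(G) \cong S/(R\, T_0(S))$. Thus the entire statement reduces to proving the equality of subgroups $R\, T_0(S)/T_0(S) = \Ker(g)^\vee$ inside $T(S)/T_0(S)$ (noting $R \le T(S)$, so this image lands in $T(S)/T_0(S)$).

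The plan is to exploit the duality between $T(S)/T_0(S)$ and $K(S)$ provided by Proposition \ref{generalized basic duality}(a), and to match the two sides against each other through this perfect pairing. Under a perfect pairing, equality of two subgroups is equivalent to equality of their annihilators, so I would instead show that the annihilator of $R\,T_0(S)/T_0(S)$ in $K(S)$ equals $\Ker(g)$. First I would read off from diagram (\ref{gzzsk}) that $g$ is the composite $K(S) \xrightarrow{\trg} H^2(S/T(S)) \xrightarrow{\inf} H^2(G)$, equivalently the composite $K(S) \hookrightarrow H^1(T(S))^S \xrightarrow{\res} H^1(R)^S \xrightarrow{\trg}_\sim H^2(G)$ along the top row of (\ref{gzzsk}); since the rightmost transgression is an isomorphism, an element $\psi \in K(S)$ lies in $\Ker(g)$ if and only if its restriction $\res_R(\psi) \in H^1(R)^S$ vanishes.

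The heart of the argument is then the identification of $\{\psi \in K(S) : \res_R(\psi) = 0\}$ with the annihilator of the image of $R$ under the lower pairing of (\ref{rrrr}). This is exactly the content of the commutative square (\ref{rrrr}): the top pairing $R/R^q[R,S] \times H^1(R)^S \to \dbZ/q$ is the perfect substitution pairing from (\ref{pairing1}) applied to $S$ and its subgroup $R$, and the square says that restricting $\psi \in K(S)$ to $R$ and then pairing with $\bar\sig \in R/R^q[R,S]$ gives the same value as pairing $\iota(\bar\sig) \in T(S)/T_0(S)$ with $\psi$ under the lower (i.e.\ the $(T(S),T_0(S))$) pairing. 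Consequently $\res_R(\psi) = 0$ holds precisely when $\psi$ annihilates the image $\iota(R/R^q[R,S]) = R\,T_0(S)/T_0(S)$ under the lower pairing --- using that the top pairing is perfect to pass from $\res_R(\psi)=0$ to annihilation of all of $R$. This shows $\Ker(g) = (R\,T_0(S)/T_0(S))^\perp$, the annihilator of $R\,T_0(S)/T_0(S)$ in $K(S)$.

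Finally I would dualize once more: since the pairing $T(S)/T_0(S) \times K(S) \to \dbZ/q$ is perfect (Proposition \ref{generalized basic duality}(a) with $T=T(S)$, $T_0=T_0(S)$), taking annihilators is an inclusion-reversing involution on subgroups, so from $\Ker(g) = (R\,T_0(S)/T_0(S))^\perp$ we recover $R\,T_0(S)/T_0(S) = \Ker(g)^\perp = \Ker(g)^\vee$. Combined with $G/T_0(G) \cong S/(R\,T_0(S)) \cong (S/T_0(S))/(R\,T_0(S)/T_0(S))$, this yields the asserted isomorphism $G/T_0(G) \cong (S/T_0(S))/\Ker(g)^\vee$. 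I expect the main obstacle to be the careful bookkeeping in verifying that the square (\ref{rrrr}) genuinely computes $\res_R$ against the substitution pairing and that the perfectness of the top pairing lets one conclude annihilation of the full image of $R$ (not merely of a generating set); everything else is a formal manipulation of perfect pairings and their annihilators.
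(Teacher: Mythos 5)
Your proof is correct and follows essentially the same route as the paper's: identify $\Ker(g)$ with $\Ker\bigl(\res_R\colon K(S)\to H^1(R)^S\bigr)$ via (\ref{gzzsk}), use the commuting square of perfect pairings (\ref{rrrr}) to conclude $\Ker(g)^\vee=\Img(\iota)=RT_0(S)/T_0(S)$, and use (A3) to recognize this as the kernel of $S/T_0(S)\to G/T_0(G)$. The paper compresses the annihilator bookkeeping you spell out into a single appeal to (\ref{rrrr}), but the argument is the same.
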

\begin{proof}
By (\ref{gzzsk}), $\Ker(g)$ is the kernel of $\res_R\colon K(S)\to H^1(R)^S$.
By (\ref{rrrr}), $\Ker(g)^\vee\isom\Img(\iota)=RT_0(S)/T_0(S)$.
Using (A3) we see that this is the kernel of the induced epimorphism $S/T_0(S)\to G/T_0(G)$.
\end{proof}

Given covers $S\to G_i$, $i=1,2$, we have isomorphisms $S^{[2]}\to G_i^{[2]}$.
They induce isomorphisms $H^1(G_i)\isom H^1(S)$ and $H^{\tensor*}(G_1)\xrightarrow{\sim} H^{\tensor*}(G_2)$.

\begin{thm}
\label{second main thm}
Let $(T,T_0,\alp)$ be a cohomological duality triple.
Let $\pi_i\colon S\to G_i$ be covers, $i=1,2$,
and $\sig\colon H^{\tensor*}(G_1)\to H^{\tensor*}(G_2)$
the induced isomorphism.
Suppose that there is a monomorphism
$\tau\colon H^2(G_1)\to H^2(G_2)$ with $\alp_{G_2}\circ\sig=\tau\circ\alp_{G_1}$.
Then there is an isomorphism $G_1/T_0(G_1)\isom G_2/T_0(G_2)$ compatible with $\pi_i$, $i=1,2$.
\end{thm}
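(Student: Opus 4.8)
The plan is to reduce everything to Lemma \ref{G3 and dual} and then to show that the two maps $g_1,g_2\colon K(S)\to H^2(G_i)$ produced by diagram (\ref{gzzsk}) have the same kernel. Indeed, for each cover $\pi_i$ Lemma \ref{G3 and dual} gives $G_i/T_0(G_i)\isom (S/T_0(S))/\Ker(g_i)^\vee$, where the isomorphism is the one induced by $\pi_i$ and $\Ker(g_i)^\vee$ is the annihilator of $\Ker(g_i)$ under the lower pairing of (\ref{rrrr}). Hence once I prove $\Ker(g_1)=\Ker(g_2)$, the annihilators coincide, the two quotients of $S/T_0(S)$ are literally the same, and composing the two induced isomorphisms yields an isomorphism $G_1/T_0(G_1)\isom G_2/T_0(G_2)$ compatible with $\pi_1,\pi_2$. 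Moreover, by construction $g_i$ factors as $K(S)\xrightarrow{\trg}A(S/T(S))\hookrightarrow H^2(S/T(S))\xrightarrow{\inf} H^2(G_i)$, the transgression being an isomorphism by (A4); so it suffices to show that the two inflation maps $\inf_{G_i}\colon A(S/T(S))\to H^2(G_i)$ have equal kernels.

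Next I would unwind the isomorphism $\sig$. Write $q_i\colon G_i\to G_i/T(G_i)$ for the projection and identify $G_i/T(G_i)$ with $S/T(S)$ via $\pi_i$, so that $q_i\circ\pi_i$ is the projection $S\to S/T(S)$. Since $T(G_i)\leq G_i^{(2)}$, the inflation $q_i^*\colon H^{\tensor*}(S/T(S))\to H^{\tensor*}(G_i)$ is an isomorphism, and the relation that $\pi_i^*\circ q_i^*$ is the inflation from $S/T(S)$ to $S$ shows that the canonical isomorphism of the paragraph preceding the theorem is $\sig=q_2^*\circ(q_1^*)\inv$. The naturality of the transformation $\alp$ applied to each $q_i$ gives commutative squares $\alp_{G_i}\circ q_i^*=\inf_{G_i}\circ\alp_{S/T(S)}$, where $\inf_{G_i}\colon H^2(S/T(S))\to H^2(G_i)$; taking images shows that $\inf_{G_i}$ carries $A(S/T(S))$ onto $A(G_i)$.

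Finally I would combine these. Substituting $\sig=q_2^*\circ(q_1^*)\inv$ into the hypothesis $\alp_{G_2}\circ\sig=\tau\circ\alp_{G_1}$ and using the two naturality squares turns it into $\inf_{G_2}\circ\bigl(\alp_{S/T(S)}\circ(q_1^*)\inv\bigr)=\tau\circ\inf_{G_1}\circ\bigl(\alp_{S/T(S)}\circ(q_1^*)\inv\bigr)$. The map $\alp_{S/T(S)}\circ(q_1^*)\inv$ is surjective onto $A(S/T(S))$, so this says precisely that $\inf_{G_2}|_{A(S/T(S))}=\tau\circ\inf_{G_1}|_{A(S/T(S))}$. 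Here is where the assumption that $\tau$ is a \emph{mono}morphism is decisive: it forces $\inf_{G_2}(a)=0$ iff $\inf_{G_1}(a)=0$ for $a\in A(S/T(S))$, i.e.\ $\Ker(\inf_{G_1}|_{A(S/T(S))})=\Ker(\inf_{G_2}|_{A(S/T(S))})$, whence $\Ker(g_1)=\Ker(g_2)$ and the theorem follows. The main obstacle is not any single deep fact but the careful bookkeeping of the three identifications $H^1(G_i)\isom H^1(S)\isom H^1(S/T(S))$ and the verification that the canonical $\sig$ is compatible with them; once $\sig$ is written as $q_2^*\circ(q_1^*)\inv$, the naturality of $\alp$ and the injectivity of $\tau$ do the rest.
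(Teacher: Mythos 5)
Your proposal is correct and follows essentially the same route as the paper's own proof: both reduce the theorem to Lemma \ref{G3 and dual} by showing $\Ker(g_1)=\Ker(g_2)$, and both obtain this by using the naturality of $\alp$ to rewrite the hypothesis $\alp_{G_2}\circ\sig=\tau\circ\alp_{G_1}$ as a statement about the inflations $\inf_{G_i}$ on $A(S/T(S))$, where the injectivity of $\tau$ forces the two kernels to coincide. Your version merely makes explicit the identifications (writing $\sig=q_2^*\circ(q_1^*)\inv$) that the paper leaves implicit in its commutative diagram.
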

\begin{proof}
For $i=1,2$ there is a commutative diagram
\[
\xymatrix{
H^{\tensor*}(S/T(S)) \ar[r]_{\sim\ }\ar@{->>}[d]_{\alp_{S/T(S)}} &
 H^{\tensor*}(G_i/T(G_i)) \ar[r]^{\quad\inf}_{\quad\sim}
 \ar@{->>}[d]_{\alp_{G_i/T(G_i)}}  &
  H^{\tensor*}(G_i)  \ar@{->>}[d]^{\alp_{G_i}} \\
A(S/T(S)) \ar[r]_{\sim} & A(G_i/T(G_i)) \ar[r]^{\quad\inf} & A(G_i).
}
\]
Define a homomorphism $g_i\colon K(S)\to H^2(G_i)$ as above.

Given $\gam\in H^{\tensor*}(S/T(S))$ let $\hat\gam_i$ be the corresponding element in $H^{\tensor*}(G_i)$.
Then $\gam$ maps trivially to $A(G_i)$ if and only if $\alp_{G_i}(\hat\gam_i)=0$.
Our assumption implies that $\alp_{G_1}(\hat\gam_1)=0$ if and only if $\alp_{G_2}(\hat\gam_2)=0$.
Consequently, the kernels of $\inf\colon A(S/T(S))\to A(G_i)\subseteq H^2(G_i)$, $i=1,2$, coincide.
Their preimages  in $K(S)$ under transgression are $\Ker(g_i)$, $i=1,2$ (see (\ref{gzzsk})),
which therefore also coincide.
Now use Lemma \ref{G3 and dual}.
\end{proof}

Applying this to Examples \ref{examples of cohomological duality triples} we obtain:

\begin{cor}
\label{ex cohomology determines quotients}
\rm
Assume that $G^{[2]}\isom(\dbZ/q)^I$ for some $I$.

\medskip

(1)\quad
$G^{[2]}$ and $\cup\colon H^1(G)\times H^1(G)\to H^2(G)$ determine $G_{[3]}=G/G_{(3)}$.

\medskip

(2)\quad
$G^{[2]}$, $\beta_G$, and $\cup\colon H^1(G)\times H^1(G)\to H^2(G)$ determine $G^{[3]}=G/G^{(3)}$.

\medskip

(3)\quad
$G^{[2]}$ and $\beta_G$ determine $G/G^{q^2}[G,G]$.
\end{cor}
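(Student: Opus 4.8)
The plan is to apply Theorem \ref{second main thm} to each of the three cohomological duality triples of Examples \ref{examples of cohomological duality triples}. In each part the precise meaning of ``determines'' is comparative: if $G_1,G_2$ are profinite groups with $G_i^{[2]}\isom(\dbZ/q)^I$ whose listed data agree, i.e.\ there are isomorphisms $H^1(G_1)\isom H^1(G_2)$ and $H^2(G_1)\isom H^2(G_2)$ carrying the relevant structure maps (the cup product in (1); the cup product together with $\beta_G$ in (2); $\beta_G$ in (3)) to one another, then the quotients $G_1/T_0(G_1)$ and $G_2/T_0(G_2)$ are isomorphic. So for each triple I must exhibit the isomorphism $\sig$ and the monomorphism $\tau$ that feed Theorem \ref{second main thm}.

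First I would arrange a common cover. The isomorphism $H^1(G_1)\isom H^1(G_2)$ is the same datum as an isomorphism $G_1^{[2]}\isom G_2^{[2]}\isom(\dbZ/q)^I$, so I would take $S$ to be a free profinite group of rank $|I|$, fix a cover $\pi_1\colon S\to G_1$ as in Example \ref{covers}, and then lift the composite $S\to G_1^{[2]}\isom G_2^{[2]}$ to a cover $\pi_2\colon S\to G_2$ by mapping a basis of $S$ to a corresponding generating set of $G_2$. With this choice the isomorphism $\sig\colon H^{\tensor*}(G_1)\to H^{\tensor*}(G_2)$ of Theorem \ref{second main thm} (the composite $H^{\tensor*}(G_1)\isom H^{\tensor*}(S)\isom H^{\tensor*}(G_2)$) is exactly the given identification of $H^1$-data, extended to all tensor powers.

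Next I would take $\tau\colon H^2(G_1)\to H^2(G_2)$ to be the given isomorphism on second cohomology, which in particular is a monomorphism. It then remains to check the single hypothesis $\alp_{G_2}\circ\sig=\tau\circ\alp_{G_1}$, and this is immediate from the definition of $\alp_G$ in each triple together with the fact that $\tau$ was chosen to respect the structure maps: in Examples \ref{examples of cohomological duality triples}(1) $\alp_G$ is built from $\cup$, in (2) from $\cup$ and $\beta_G$, and in (3) from $\beta_G$ alone, and on all other tensor degrees $\alp_G$ vanishes so the identity is trivial there. Theorem \ref{second main thm} then provides an isomorphism $G_1/T_0(G_1)\isom G_2/T_0(G_2)$, which for the three triples is precisely $(G_i)_{[3]}$, $G_i^{[3]}$, and $G_i/G_i^{q^2}[G_i,G_i]$, proving (1), (2), and (3) respectively.

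The hard part, such as it is, lies entirely in matching the intrinsic isomorphism $\sig$ that the covers produce with the externally prescribed identification of the cohomological data, that is, in choosing $\pi_1,\pi_2$ with a common source $S$ compatibly with the given $G_1^{[2]}\isom G_2^{[2]}$. Once the covers are so aligned, verifying $\alp_{G_2}\circ\sig=\tau\circ\alp_{G_1}$ is a direct unwinding of the definitions, since the cup product and the Bockstein are functorial and are by hypothesis intertwined by $\sig$ and $\tau$; the real content of the statement already resides in Theorem \ref{second main thm}.
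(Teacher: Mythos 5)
Your overall route is exactly the paper's: the paper proves this corollary by applying Theorem \ref{second main thm} to the three triples of Examples \ref{examples of cohomological duality triples}, and your comparative reading of ``determines'', your choice of $\tau$ as the given $H^2$-isomorphism, and your verification of $\alp_{G_2}\circ\sig=\tau\circ\alp_{G_1}$ (naturality of $\cup$ and $\beta$ plus the hypothesis that the identifications intertwine them) all match what the paper leaves implicit. The identification of $G_i/T_0(G_i)$ with $(G_i)_{[3]}$, $G_i^{[3]}$, $G_i/G_i^{q^2}[G_i,G_i]$ in the three cases is also correct.

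The genuine gap is in the step you yourself single out as the hard part: constructing the aligned cover $\pi_2\colon S\to G_2$ ``by mapping a basis of $S$ to a corresponding generating set of $G_2$''. This presupposes that the prescribed basis of $G_2^{[2]}$ lifts to a generating set of $G_2$ converging to $1$, and for the groups this corollary is actually about --- arbitrary profinite groups with $G^{[2]}\isom(\dbZ/q)^I$, in particular absolute Galois groups $G_F$, which are not pro-$p$ --- this is not automatic: a closed subgroup $H$ with $HG_2^{(2)}=G_2$ need not equal $G_2$, so the Frattini argument that the paper uses in the pro-$p$ setting of Corollary \ref{first equivalnce in thmC} is unavailable. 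Concretely, take $G_2=\widehat{\dbZ}$ (the identity map is a cover, since $H^2(\widehat{\dbZ})=0$): the element $1+q$ maps to a generator of $G_2^{[2]}=\dbZ/q$, yet it topologically generates the proper subgroup $(1+q)\widehat{\dbZ}$. So arbitrary lifts fail, and the existence of generating lifts is precisely what must be proved. A repair within your framework: take any cover $\pi_2'\colon S\to G_2$ as in Example \ref{covers}, and precompose with an endomorphism of $S$ lifting the discrepancy automorphism $\rho\in\mathrm{Aut}(S^{[2]})$ between the induced identification and the prescribed one; such a lift exists, at least for finite rank, because $\mathrm{Aut}(S^{[2]})\isom GL_n(\dbZ/q)$ is generated by elementary and diagonal matrices, and each of these lifts visibly to $\mathrm{Aut}(S)$ (namely $x_i\mapsto x_ix_j^{a}$, resp.\ $x_i\mapsto x_i^{u}$ with $u\in\widehat{\dbZ}^\times$ reducing to the given unit of $\dbZ/q$); the infinite-rank case needs a further argument. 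With this point filled in, your proof is complete and coincides with the paper's.
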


\begin{proof}[Proof of Theorem B]
Use Theorem \ref{absolute Galois groups} and part (1) of Corollary \ref{ex cohomology determines quotients}.
\end{proof}

\section{Presentations}
Let $(T,T_0,\alp)$ be again a cohomological duality triple.
We use the techniques of the previous section to characterize the surjectivity of $\alp_G$ in terms of group presentations.
Let again $\pi\colon S\to G$ be a cover and $R=\Ker(\pi)$.
Note that $R^q[R,S]\leq R\cap T_0(S)$, by (A2).

\begin{thm}
\label{5th main thm}
There is a natural duality between $(R\cap T_0(S))/R^q[R,S]$ and the cokernel of $\inf_G\colon A(G/T(G))\to H^2(G)$.
\end{thm}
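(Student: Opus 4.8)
The plan is to exploit the same duality machinery and commutative diagrams developed for Lemma \ref{G3 and dual} and Theorem \ref{second main thm}, now reading off the \emph{cokernel} rather than the kernel of the composite map arising from (\ref{gzzsk}). Recall from (\ref{gzzsk}) that we have a commutative diagram with the transgression isomorphisms on the left, and that the composite $g\colon K(S)\to H^2(G)$ is $\inf\circ\trg$ where $\trg\colon K(S)\xrightarrow{\sim}A(S/T(S))$ is bijective by (A4). First I would note that, since $\trg$ is an isomorphism, $\inf_G\colon A(G/T(G))\to H^2(G)$ has the same cokernel as $g$; indeed under the identification $A(S/T(S))\isom A(G/T(G))$ coming from the cover (the left vertical isomorphism in the first diagram of the proof of Theorem \ref{second main thm}), $g$ \emph{is} this inflation map up to the transgression isomorphism. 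So it suffices to produce a natural duality between $(R\cap T_0(S))/R^q[R,S]$ and $\Coker(g)$.

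The key step is to apply duality to the \emph{restriction} map $\res_R\colon K(S)\to H^1(R)^S$ appearing along the top row of (\ref{gzzsk}). By that diagram, $\Ker(g)=\Ker(\res_R)$ and $\Img(g)=\trg(\Img(\res_R))$ inside $H^2(G)$, so $\Coker(g)\isom\Coker(\res_R\colon K(S)\to H^1(R)^S)=H^1(R)^S/\res_R(K(S))$. Thus the task reduces to dualizing the exact sequence
\[
K(S)\xrightarrow{\res_R} H^1(R)^S\to \Coker(\res_R)\to 0.
\]
Now I would invoke the two perfect substitution pairings displayed in (\ref{rrrr}): the top pairing $R/R^q[R,S]\times H^1(R)^S\to\dbZ/q$ and the bottom pairing $T(S)/T_0(S)\times K(S)\to\dbZ/q$ (the latter being the pairing of Proposition \ref{generalized basic duality}(a) for the cover, valid because $R^q[R,S]\leq R\cap T_0(S)$ by (A2), which is exactly the hypothesis noted just before the statement). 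The vertical maps in (\ref{rrrr})---namely $\iota\colon R/R^q[R,S]\to T(S)/T_0(S)$ on the left and $\res_R\colon K(S)\to H^1(R)^S$ on the right---are adjoint with respect to these two pairings, that is, the square (\ref{rrrr}) commutes. Therefore, by the standard fact that adjoint maps between perfectly paired groups have dual image and kernel, the cokernel of $\res_R$ is canonically dual to the kernel of $\iota$. The kernel of $\iota\colon R/R^q[R,S]\to T(S)/T_0(S)$ is precisely $(R\cap T_0(S))/R^q[R,S]$, since $\iota$ is induced by the inclusion $R\hookrightarrow T(S)$ and its kernel consists of the classes of elements of $R$ lying in $T_0(S)$.

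Assembling these identifications gives a natural perfect pairing
\[
(R\cap T_0(S))/R^q[R,S]\ \times\ \Coker\bigl(\inf_G\colon A(G/T(G))\to H^2(G)\bigr)\to\dbZ/q,
\]
which is the asserted duality. The main obstacle I anticipate is verifying the naturality (functoriality) of this duality with the requisite care: one must check that the two pairings in (\ref{rrrr}) are themselves functorial in the cover $S\to G$ (this is the functoriality claim attached to (\ref{pairing1}) and hence to Proposition \ref{generalized basic duality}(a)), that $\iota$ and $\res_R$ are genuinely adjoint and not merely abstractly dual, and that the induced isomorphism $\Coker(\res_R)\isom\Coker(\inf_G)$ is independent of the transgression identifications used along the way. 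Granting the functoriality already established for the substitution pairings, the duality between cokernel and kernel of adjoint maps is formal, so the remaining work is essentially bookkeeping to confirm that every identification in (\ref{rrrr}) and (\ref{gzzsk}) is compatible with morphisms of covers.
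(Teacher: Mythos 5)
Your proposal is correct and follows essentially the same route as the paper: the paper likewise extracts from (\ref{gzzsk}) the commutative square with transgression isomorphisms identifying $\Coker(\res_R)$ with $\Coker\bigl(\inf_G\colon A(G/T(G))\to H^2(G)\bigr)$, and then reads off from the adjoint square of perfect pairings (\ref{rrrr}) that $\Coker(\res_R)$ is dual to $\Ker(\iota)=(R\cap T_0(S))/R^q[R,S]$. The only difference is expository: you spell out the adjointness/duality formalities that the paper leaves implicit.
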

\begin{proof}
The induced map $S/T(S)\to G/T(G)$ is an isomorphism.
From (\ref{gzzsk}) we obtain a commutative diagram
\[
\xymatrix{K(S)\ar[r]^{\res_R}\ar[d]_{\trg}^{\wr} & H^1(R)^S\ar[d]_{\wr}^{\trg} \\
A(S/T(S))\ar[r]^{\inf} & H^2(G).
}
\]
The right transgression maps $\Coker(\res_R)$ isomorphically
onto the cokernel of $\inf_G\colon A(G/T(G))\to H^2(G)$.
By (\ref{rrrr}), $\Coker(\res_R)$ is dual to $\Ker(\iota)=(R\cap T_0(S))/R^q[R,S]$.
\end{proof}

\begin{cor}
$\alp_G$ is onto $H^2(G)$ if and only if $R^q[R,S]=R\cap T_0(S)$.
\end{cor}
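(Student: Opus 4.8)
The plan is to deduce this immediately from Theorem \ref{5th main thm}, once I reinterpret its two sides. By definition $A(G)=\Img(\alp_G)$, so the assertion that $\alp_G$ is onto $H^2(G)$ is literally the equality $A(G)=H^2(G)$. On the other side, the condition $R^q[R,S]=R\cap T_0(S)$ is precisely the statement that the group $(R\cap T_0(S))/R^q[R,S]$ appearing in Theorem \ref{5th main thm} is trivial. Thus it suffices to show that $A(G)=H^2(G)$ holds if and only if the \emph{other} group in that duality, the cokernel of $\inf_G\colon A(G/T(G))\to H^2(G)$, vanishes.

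First I would identify this cokernel. The natural transformation $\alp$ furnishes, for the projection $G\to G/T(G)$, a commutative square relating $\alp_{G/T(G)}$ with $\alp_G$ along the inflation map. Since $T(G)\leq G^{(2)}$ by (A2), the inflation $H^1(G/T(G))\to H^1(G)$ is an isomorphism, hence so is the induced map $H^{\tensor*}(G/T(G))\to H^{\tensor*}(G)$. Chasing this square shows that the image of $\inf_G$ restricted to $A(G/T(G))=\Img(\alp_{G/T(G)})$ equals $\Img(\alp_G)=A(G)$. Consequently $\Coker\bigl(\inf_G\colon A(G/T(G))\to H^2(G)\bigr)=H^2(G)/A(G)$, which is trivial exactly when $A(G)=H^2(G)$, that is, when $\alp_G$ is surjective.

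Finally I would invoke the duality itself. Theorem \ref{5th main thm} gives a perfect pairing into $\dbZ/q$ between $(R\cap T_0(S))/R^q[R,S]$ and $\Coker\bigl(\inf_G\colon A(G/T(G))\to H^2(G)\bigr)$, and under such a duality one group is trivial if and only if the other is. Combining with the previous paragraph: $\alp_G$ is onto $H^2(G)$ if and only if the cokernel vanishes, if and only if $(R\cap T_0(S))/R^q[R,S]=0$, i.e.\ $R^q[R,S]=R\cap T_0(S)$, as required.

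As for difficulty, there is essentially no obstacle here, since Theorem \ref{5th main thm} carries all the weight. The only genuine (but routine) point is the identification $\Img(\inf_G|_{A(G/T(G))})=A(G)$, which rests on the degree-$1$ inflation isomorphism and the naturality of $\alp$; the rest is a formal consequence of having a perfect pairing.
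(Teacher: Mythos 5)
Your proof is correct and follows the paper's own route: the paper likewise reduces the statement to the equivalence between surjectivity of $\alp_G$ and surjectivity of $\inf_G\colon A(G/T(G))\to H^2(G)$ (which you justify in detail via naturality of $\alp$ and the degree-$1$ inflation isomorphism), and then invokes Theorem \ref{5th main thm}. The only difference is that you spell out the identification $\Img\bigl(\inf_G|_{A(G/T(G))}\bigr)=A(G)$, which the paper leaves implicit.
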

\begin{proof}
The surjectivity of $\alp_G$ is equivalent to the surjectivity of the inflation $\inf_G\colon A(G/T(G))\to H^2(G)$.
Now use Theorem \ref{5th main thm}.
\end{proof}

Applying this for Examples \ref{examples of cohomological duality triples} we deduce:

\begin{examples}
\rm
Assume that $G^{[2]}\isom(\dbZ/q)^I$ for some $I$.

\medskip

(1)\quad
$H^2(G)=H^2(G)_\dec$ if and only if $R^q[R,S]=R\cap S_{(3)}$.

\medskip

(2)\quad
$H^2(G)=H^2(G)_\dec+\Img(\beta_G)$ if and only if $R^q[R,S]=R\cap S^{(3)}$
(compare also \cite{CheboluEfratMinac}*{Th.\ 7.1}).

\medskip
(3)\quad
$H^2(G)=\Img(\beta_G)$ if and only if $R^q[R,S]=R\cap S^{q^2}[S,S]$.
\end{examples}

By Theorem \ref{absolute Galois groups}, if $G=G_F$ is the absolute Galois group of a field $F$ containing a root of unity of order $q$, then (1), and therefore (2), are valid.

\section{$T_0(G)$ as an intersection}
Let $(T,T_0,\alp)$ be again a cohomological duality triple.
In this section we present $T_0(G)$ as the intersection
of all open normal subgroups $M$ of the profinite group $G$ with $G/M$ contained in a certain
list $\calL(G)$ of finite groups.
In all our main examples, and assuming, say, that $G$ is an absolute Galois group of a field containing a root of unity of order $p$, the list $\calL(G)$ is finite and explicit, and does not depend on $G$.
In particular, this will imply Theorem D.

Following \cite{EfratMinac11}, we say that $G$ has \textbf{Galois relation type} if
\begin{enumerate}
\item[(i)]
$G^{[2]}\isom (\dbZ/q)^I$ for some set $I$;
\item[(ii)]
there exists $\xi\in H^1(G)$ with $\beta_G(\psi)=\psi\cup\xi$ for every $\psi\in H^1(G)$;
\item[(iii)]
the kernel of $\inf\colon H^2(G^{[2]})_\dec\to H^2(G)$ is generated by cup products $\psi\cup\psi'$,
with $\psi,\psi'\in H^1(G^{[2]})$.
\end{enumerate}
By Theorem \ref{absolute Galois groups} and Lemma \ref{ker of inflation}, this holds when $G=G_F$
is the absolute Galois group of a field $F$ containing a root of unity of order $q$
(this was earlier shown in \cite{EfratMinac11}*{Prop.\ 3.2}).

\begin{defin}
\rm
A \textbf{special set} for the profinite group $G$ with respect to $(T,T_0,\alp)$ will be a set $\Sig$
of pairs $(\bar G,\bar\varphi)$ such
that $\bar G$ is a finite quotient of $G^{[2]}$, $\bar\varphi\in H^{\tensor*}(\bar G)$, and the kernel of
$\inf_G\colon A(G/T(G))\to H^2(G)$ is generated by the elements
$\alp_{G/T(G)}(\inf_{G/T(G)}(\bar\varphi))$ with $(\bar G,\bar\varphi)\in\Sig$.
\end{defin}

\begin{examples}
\label{examples of special sets}
\rm
Let $G$ be a profinite group of Galois relation type.

\medskip

(1)\quad
Consider the cohomological duality triple of Example \ref{examples of cohomological duality triples}(1).
Take $\psi,\psi'\in H^1(G^{[2]})$ such that $\psi\cup\psi'=\alp_{G^{[2]}}(\psi\tensor\psi')\neq0$
is in the kernel of  $\inf\colon H^2(G^{[2]})_\dec\to H^2(G)$.
Let $\bar G=G^{[2]}/(\Ker(\psi)\cap\Ker(\psi'))$
and take $\bar\psi,\bar\psi'\in H^1(\bar G)$ with $\psi=\inf_{G^{[2]}}(\bar\psi)$, $\psi'=\inf_{G^{[2]}}(\bar\psi')$.
By (iii), the set of all such pairs $(\bar G,\bar\psi\tensor\bar\psi')$ is a special set for $G$.

\medskip

(2)\quad
Consider the triple of Example \ref{examples of cohomological duality triples}(2).
By (i) and Lemma \ref{H2 for elementary abelian groups},
\[
H^2(G^{[2]})=\Img(\beta_{G^{[2]}})+H^2(G^{[2]})_\dec=A(G/T(G)).
\]

We first claim that $K'=\Ker(\inf_G\colon H^2(G^{[2]})\to H^2(G))$ is generated by elements of the form
$\alp_{G^{[2]}}(-\psi\oplus (\psi\tensor\psi'))=-\beta_{G^{[2]}}(\psi)+\psi\cup\psi'$,
where $\psi,\psi'\in H^1(G^{[2]})$, $\psi\neq0$, and $-\psi\oplus (\psi\tensor\psi')$ is considered as an element of $H^{\tensor*}(G^{[2]})$ (compare \cite{EfratMinac11}*{Prop.\ 4.3}).
Indeed, for $\xi$ as in (ii) we take $\xi_0\in H^1(G^{[2]})$ with $\xi=\inf_G(\xi_0)$.\
Let $\theta=\beta_{G^{[2]}}(\eta)+\sum_{i=1}^n\psi_i\cup\psi'_i\in K'$,
where $\eta,\psi_i,\psi'_i\in H^1(G^{[2]})$.
Then also $\eta\cup\xi_0+\sum_{i=1}^n\psi_i\cup\psi'_i$ is in $K'$, and by (iii),
it can be written as $\sum_{j=1}^m\chi_j\cup\chi'_j$, with $\chi_j,\chi'_j\in H^1(G^{[2]})$ and
$\chi_j\cup\chi'_j\in K'$ for each $j$.
Hence
\[
\begin{split}
\theta&=\beta_{G^{[2]}}(\eta)+(-\eta)\cup\xi_0+\sum_{j=1}^m\chi_j\cup\chi'_j\\
&=(\beta_{G^{[2]}}(\eta)+(-\eta)\cup\xi_0)+\sum_{j=1}^m(-\beta_{G^{[2]}}(\chi_j)+\chi_j\cup(\chi'_j+\xi_0))\\
&\qquad\qquad\qquad\qquad\qquad\quad+\sum_{j=1}^m(\beta_{G^{[2]}}(\chi_j)+(-\chi_j)\cup\xi_0)
\end{split}
\]
and this sum is in $K'$, proving the claim.

Now given $-\psi\oplus (\psi\tensor\psi')$ as above, let $\bar G=G^{[2]}/(\Ker(\psi)\cap\Ker(\psi'))$
and take $\bar\psi,\bar\psi'\in H^1(\bar G)$ with $\psi=\inf_{G^{[2]}}(\bar\psi)$, $\psi'=\inf_{G^{[2]}}(\bar\psi')$.
The set $\Sig$ of all pairs $(\bar G,-\bar\psi+(\bar\psi\tensor\bar\psi'))$ is special for $G$.

\medskip

(3)\quad
Consider the cohomological duality triple of Example \ref{examples of cohomological duality triples}(3).
Trivially, the kernel of $\inf_G\colon \Img(\beta_{G^{[2]}})\to \Img(\beta_G)$
is generated by elements $\beta_{G^{[2]}}(\psi)$, with $\psi\in H^1(G^{[2]})$.
For such $\psi$ let $\bar G=G^{[2]}/\Ker(\psi)$ and take $\bar\psi\in H^1(\bar G)$ with $\psi=\inf(\bar\psi)$.
The set $\Sig$ of all pairs $(\bar G,\bar\psi)$ is special for $G$.
\end{examples}

For the rest of this section we assume that $q=p$ is prime.
When $p\neq2$ let $H_{p^3}$ be the Heisenberg group of order $p^3$ (see the Introduction), and let
\[
M_{p^3}=\bigl\langle r,s\ \bigm|\ r^{p^2}=s^p=1,\ r^p=[r,s]\bigr\rangle
\]
be the unique nonabelian group of odd order $p^3$ and exponent $p^2$.
Let $D_4$ be the dihedral group of order $8$.

Given a special set $\Sig$ for $G$ with respect to $(T,T_0,\alp)$,
we choose for every $(\bar G,\bar\varphi)\in \Sig$  a central extension
\begin{equation}
\label{omega}
\omega:\qquad 0\to \dbZ/p\to B\to\bar G\to 1
\end{equation}
corresponding to $\alp_{\bar G}(\bar\varphi)\in H^2(\bar G)$.
Note that $B$ depends only on $\alp_{\bar G}(\bar\varphi)$.
Let $\calL(G)$ be the class of all (isomorphism classes of) finite groups $B$ arising in this way.

\begin{examples}
\label{examples of L(G)}
\rm
Suppose that $G$ has Galois relation type.

\medskip

(1)\quad
Let $(T,T_0,\alp)$ be as in Example \ref{examples of cohomological duality triples}(1) and
let $\Sig$ be the special set for $G$ as in
Example \ref{examples of special sets}(1).
Consider $(\bar G,\bar\psi\tensor\bar\psi')\in\Sig$.
Thus $\bar\psi,\bar\psi'\in H^1(\bar G)$, $\bar\psi\cup\bar\psi'\neq0$, and $\Ker(\bar\psi)\cap\Ker(\bar\psi')=\{1\}$.
Let
\[
\omega:\qquad 0\to \dbZ/p\to B\to\bar G\to 1
\]
be the central extension corresponding to $\bar\psi\cup\bar\psi'$.

When $p\neq2$, $\bar\psi,\bar\psi'$ are $\dbF_p$-linearly independent, $\bar G\isom(\dbZ/p)^2$ and $B\isom H_{p^3}$
\cite{EfratMinac11}*{Prop.\ 9.1(f)}.
Hence $\calL(G)=\{H_{p^3}\}$.

Next let $p=2$.
When $\bar\psi=\bar\psi'$ we have $\bar G\isom\dbZ/2$ and $B\isom\dbZ/4$ \cite{EfratMinac11}*{Prop.\ 9.1(c)}.
Otherwise $\bar\psi,\bar\psi'$ are $\dbF_p$-linearly independent,
$\bar G\isom(\dbZ/2)^2$ and $B\isom D_4$ \cite{EfratMinac11}*{Prop.\ 9.1(e)}.
We conclude that $\calL(G)=\{\dbZ/4,D_4\}$.

\medskip

(2)\quad
Let $(T,T_0,\alp)$ be as in Example \ref{examples of cohomological duality triples}(2)
and let $\Sig$ be the special set for $G$ as in Example \ref{examples of special sets}(2).
Consider $(\bar G,-\bar\psi\oplus(\bar\psi\tensor\bar\psi'))\in\Sig$.
Thus $\bar\psi,\bar\psi'\in H^1(\bar G)$, $\bar\psi\neq0$, and $\Ker(\bar\psi)\cap\Ker(\bar\psi')=\{1\}$.
Let
\[
\omega:\qquad 0\to \dbZ/p\to B\to\bar G\to 1
\]
be the central extension corresponding to $-\beta_{\bar G}(\bar\psi)+\bar\psi\cup\bar\psi'$.

When $p\neq2$ and $\bar\psi,\bar\psi'$ are $\dbF_p$-linearly independent,
$\bar G\isom(\dbZ/p)^2$ and $B\isom M_{p^3}$ \cite{EfratMinac11}*{Prop.\ 9.4}.
When $p\neq2$ and $\bar\psi,\bar\psi'$ are $\dbF_p$-linearly dependent, $\bar G\isom\dbZ/p$
and $B\isom  \dbZ/p^2$ \cite{EfratMinac11}*{Cor.\ 9.3}.

\medskip

(3)\quad
Let $(T,T_0,\alp)$  be as in Example \ref{examples of cohomological duality triples}(3), and take
$\Sig$ as in Example \ref{examples of special sets}(3).
By \cite{EfratMinac11}*{Prop.\ 9.2}, the central extension corresponding to $(\bar G,\bar\psi)\in\Sig$ is
\[
0\to\dbZ/p\to\dbZ/p^2\to\bar G\ (\isom\dbZ/p)\to 1,
\]
so $\calL(G)=\{\dbZ/p^2\}$.
\end{examples}

\begin{thm}
\label{fourth main thm}
Suppose that $\Sig$ is a special set for the profinite group $G$ with respect to the cohomological duality triple $(T,T_0,A)$.
Then
\[
T_0(G)=T(G)\cap\bigcap\{M\trianglelefteq  G\ |\ G/M\in\calL(G)\}.
\]
\end{thm}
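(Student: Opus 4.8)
The plan is to read the intersection description directly off the duality already packaged in condition (f) of Proposition \ref{equivelence for dualities}. Since $\Sig$ is a special set, $G$ has Galois relation type, so $G^{[2]}\isom(\dbZ/q)^I$ and by Example \ref{covers} a cover $S\to G$ exists; hence (A4) gives that $A=A(G/T(G))$ is dual to $(T(G),T_0(G))$, and all the equivalent conditions of Proposition \ref{equivelence for dualities} hold. By the $5$-term sequence together with condition (c) there, the kernel $\Ker(\inf_G\colon A\to H^2(G))$ equals $A\cap\trg(H^1(T(G))^{G})$; and the defining property of a special set is exactly that the elements $\alp_{G/T(G)}(\inf_{G/T(G)}(\bar\varphi))$, for $(\bar G,\bar\varphi)\in\Sig$, generate this subgroup. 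Thus they form an admissible generating set $A_0$ in the sense of Proposition \ref{equivelence for dualities}, and condition (f) applies with these generators.

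First I would translate the homomorphisms $\Psi$ of condition (f) into homomorphisms into the groups of $\calL(G)$. By naturality of $\alp$ we have $\alp_{G/T(G)}(\inf_{G/T(G)}(\bar\varphi))=\inf(\alp_{\bar G}(\bar\varphi))$, the inflation along $G/T(G)\to\bar G$ of the class $\alp_{\bar G}(\bar\varphi)$ defining the extension $\omega$ in (\ref{omega}). Since inflation of a $2$-class corresponds to pulling back the central extension, the extension over $G/T(G)$ attached to this generator is the fibre product $C=B\times_{\bar G}(G/T(G))$, and a lift $\Psi\colon G\to C$ of $G\to G/T(G)$ is the same datum as a homomorphism $\psi'\colon G\to B$ lifting the canonical $G\to\bar G$, with $\Ker(\Psi)=T(G)\cap\Ker(\psi')$. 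As the generators may be taken nonzero, each $\omega$ is a nonsplit extension, so every such $\psi'$ is surjective and $G/\Ker(\psi')\isom B\in\calL(G)$. Feeding this into condition (f) yields $T_0(G)=T(G)\cap\bigcap_{\psi'}\Ker(\psi')$, the intersection running over these lifts; since each $\Ker(\psi')$ is one of the $M$, this already gives the inclusion $T(G)\cap\bigcap\{M\mid G/M\in\calL(G)\}\subseteq T_0(G)$.

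For the reverse inclusion I would argue that \emph{every} normal $M$ with $G/M\isom B\in\calL(G)$ contains $T_0(G)$, even when the quotient map $\eta\colon G\to B$ is not one of the canonical lifts above. Here I would use (A3): for the epimorphism $\eta$ one has $\eta(T_0(G))=T_0(B)$, so it suffices to know that $T_0(B)=1$ for each $B\in\calL(G)$. This is where the explicit determination of $\calL(G)$ enters (Examples \ref{examples of L(G)}): every such $B$ is a central $\dbZ/p$-extension of an elementary abelian $\bar G$, so $B^{(2)}$ is central of order dividing $p$, and for the relevant triples the formula for $T_0$ then forces $T_0(B)=1$ (e.g.\ for $B=H_{p^3}$ of exponent $p$ one has $B_{(3)}=B^p[B^{(2)},B]=1$). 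Granting this, $\eta(T_0(G))=1$, hence $T_0(G)\leq M$; intersecting over all such $M$ and using $T_0(G)\leq T(G)$ gives $T_0(G)\subseteq T(G)\cap\bigcap\{M\mid G/M\in\calL(G)\}$, completing the equality.

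The hard part will be this last point: the inclusion $T_0(G)\subseteq M$ for the non-canonical quotients is not formal, since condition (f) only sees lifts of the \emph{fixed} map $G\to\bar G$, whereas $\calL(G)$ records abstract isomorphism types and allows arbitrary surjections $\eta$. It is resolved only by the vanishing $T_0(B)=1$, which fails for a degenerate triple (say one with $T_0=T$) and must therefore be extracted from the concrete structure of the groups in $\calL(G)$ rather than from the axioms (A1)--(A4) alone. The remaining steps are bookkeeping: verifying that the special-set generators are admissible in Proposition \ref{equivelence for dualities}, and that the inflation--pullback identification $C=B\times_{\bar G}(G/T(G))$ together with nonsplitness correctly matches $\Ker(\Psi)$ with $T(G)\cap\Ker(\psi')$.
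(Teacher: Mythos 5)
Your proposal is correct, and its core is identical to the paper's proof: the paper likewise forms the pullback extension $\hat\omega\colon 0\to\dbZ/p\to B\times_{\bar G}(G/T(G))\to G/T(G)\to1$ representing $\inf_{G/T(G)}(\alp_{\bar G}(\bar\varphi))$, identifies lifts $\hat\Psi$ of $G\to G/T(G)$ with lifts $\Psi\colon G\to B$ of the canonical map $G\to\bar G$ via $\Ker(\hat\Psi)=T(G)\cap\Ker(\Psi)$, deduces surjectivity of $\Psi$ from non-splitness of $\omega$, and invokes condition (f) of Proposition \ref{equivelence for dualities} to get $T_0(G)=T(G)\cap\bigcap_\Psi\Ker(\Psi)$. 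Where you genuinely diverge is the final step, and there you are \emph{more} careful than the paper. The paper finishes with the one-line assertion that the kernels $\Ker(\Psi)$ ``are just'' the normal subgroups $M$ with $G/M\in\calL(G)$ --- exactly the point you isolate as the hard part. Read as a set equality this can actually fail for a special set that is not saturated: for $G$ free pro-$p$ of rank $3$, $p$ odd, with the triple of Example \ref{examples of cohomological duality triples}(1) and $\Sig$ the three coordinate pairs $(\bar G_{ij},\bar\chi_i\tensor\bar\chi_j)$, a surjection $G\to H_{p^3}$ that mixes Frattini coordinates has a kernel which is no $\Ker(\Psi)$; for the saturated sets of Examples \ref{examples of special sets} the equality does hold, but only via an embedding-problem argument (Hoechsmann's criterion applied to an arbitrary surjection $G\to B$) that the paper omits. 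Your substitute --- (A3) applied to $G\to G/M$ reduces $T_0(G)\leq M$ to $T_0(B)=1$ for each $B\in\calL(G)$ --- closes this gap and works for an arbitrary special set, which is what the theorem claims.

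Two corrections. First, ``since $\Sig$ is a special set, $G$ has Galois relation type'' is not a valid inference; the definition of a special set does not include that hypothesis. What both proofs actually need is a cover of $G$, so that (A4) and hence condition (f) are available; this is an implicit standing assumption, satisfied in all the paper's examples where $T(G)=G^{(2)}$ and $G^{[2]}\isom(\dbZ/q)^I$. Second, your claim that $T_0(B)=1$ cannot come from the axioms and must be read off the explicit lists is too pessimistic. When $T(\cdot)=(\cdot)^{(2)}$, the group $B$ itself admits a cover, and the class of $\omega$ is $\alp_{\bar G}(\bar\varphi)\in A(\bar G)$; hence the transgression $H^1(T(B))^B\to H^2(\bar G)$ attached to $\omega$ takes values in $A(\bar G)$, so condition (a) of Proposition \ref{equivelence for dualities} (i.e.\ (A4) applied to $B$) forces every element of $H^1(T(B))^B$ to kill $T_0(B)$, and the perfect pairing of Proposition \ref{generalized basic duality}(a) gives $T_0(B)=1$. (Your degenerate triple $T_0=T$ is not a counterexample: there $\alp=0$, all the extensions split, and $T_0(B)\leq B^{(2)}=1$ anyway.) So your reverse inclusion can be made uniform across the paper's triples; as written, with the case check against Examples \ref{examples of L(G)}, it is nonetheless a complete argument for every triple the paper actually uses.
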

\begin{proof}
Let $(\bar G,\bar\varphi)\in\Sig$ and $\omega$ a central extension as in (\ref{omega}).
Since $\bar G$ is a quotient of $G^{[2]}$ it is also a quotient of  $G/T(G)$.
Let $\pr\colon B\times_{\bar G}(G/T(G))\to B$ be the projection from the fibred product.
The central extension
\[
\hat\omega:\qquad  0\to \dbZ/p\to B\times_{\bar G}(G/T(G))\to G/T(G)\to 1
\]
then corresponds to $\inf_{G/T(G)}(\alp_{\bar G}(\bar\varphi))$
\cite{EfratMinac11}*{Remark 6.1}.

Next let $A_0$ be the set of all elements $\alp_{G/T(G)}(\inf_{G/T(G)}(\bar\varphi))$, where $(\bar G,\bar\varphi)\in\Sig$.
Thus $A_0$ generates the kernel of $\inf\colon A(G/T(G))\to H^2(G)$.
Consider homomorphisms $\Psi\colon G\to B$, $\hat\Psi\colon G\to B\times_{\bar G}(G/T(G))$ as in the following diagram.
For every $\Psi$ making the lower triangle commutative
there is a unique $\hat\Psi$ making the upper triangle commutative with $\Psi=\pr\circ\hat\Psi$.
\[
\xymatrix{
&&&&G\ar[d]\ar[ld]_{\hat\Psi}\ar[ldd]^(.7){\Psi}\\
\hat\omega:&0\ar[r]&\dbZ/p\ar[r]\ar@{=}[d] & B\times_{\bar G}(G/T(G))\ar[r]\ar[d]_{\pr} & G/T(G)\ar[r]\ar[d]& 1\\
\omega:&0\ar[r]&\dbZ/p\ar[r]& B\ar[r]^{\pi}& \bar G\ar[r] & 1.
}
\]
Note that $\Ker(\hat\Psi)=T(G)\cap\Ker(\Psi)$.
Furthermore, if $\pi$ maps a proper subgroup $B_0$ of $B$ onto $\bar G$, then $\pi|_{B_0}\colon B_0\to\bar G$ is an isomorphism.
Since $\omega$ is non-split, $\Psi$ is therefore surjective.
Now by condition (f) of Proposition  \ref{equivelence for dualities},
$T_0(G)=\bigcap\Ker(\hat\Psi)=T(G)\cap \bigcap\Ker(\Psi)$ for all $\Psi$ as above ($T_0(G)=T(G)$ when there is no such $\Psi$).
The kernels $\Ker(\Psi)$ are just the normal subgroups $M$ of $G$ such that $G/M=B\in\calL(G)$.
\end{proof}

We now apply Theorem \ref{fourth main thm} to Examples \ref{examples of cohomological duality triples} to derive
concrete presentations of the canonical subgroups discussed so far as intersections.
The first example below contains in particular the \textsl{proof of Theorem D}.

\begin{examples}
\label{tyty}
\rm
Suppose that $G$ has Galois relation type.

\medskip

(1)\quad
Let $(T,T_0,\alp)$ be as in Example \ref{examples of cohomological duality triples}(1) and
let $\Sig$ be the special set for $G$ as in Example \ref{examples of special sets}(1).
By Example \ref{examples of L(G)}(1), $\calL(G)=\{H_{p^3}\}$ when $p\neq2$ and $\calL(G)=\{\dbZ/4,D_4\}$ when $p=2$.
In the first case we obtain from Theorem \ref{fourth main thm} that
\[
\begin{split}
G_{(3)}&=G^{(2)}\cap\bigcap\{M\trianglelefteq G\ |\ G/M\isom H_{p^3}\} \\
&=\bigcap\{M\trianglelefteq G\ |\ G/M\isom1,\dbZ/p,H_{p^3}\}.
\end{split}
\]
In view of Theorem \ref{absolute Galois groups}, this gives Theorem D.

In the second case  $G_{(3)}=G^{(3)}$ (Remark \ref{remark on the def of G3}(1)), and we obtain that
\[
G_{(3)}=\bigcap\{M\trianglelefteq G\ |\ G/M\isom1,\dbZ/2,\dbZ/4,D_4\}.
\]
This last fact was proved by Villegas \cite{Villegas88}, and Min\'a\v c and Spira \cite{MinacSpira96}*{Cor.\ 2.18}
when $G$ is an absolute Galois group of a field, and in \cite{EfratMinac11}*{Cor.\ 11.3 and Prop.\ 3.2}
for general profinite groups of Galois relation type.
Moreover, $\dbZ/2$ can be omitted from this list if $G\not\isom\dbZ/2$ \cite{EfratMinac11}*{Cor.\ 11.4}.

\medskip

(2)\quad
Let $(T,T_0,\alp)$ be as in Example \ref{examples of cohomological duality triples}(2)
and let $\Sig$ be the special set for $G$ as in Example \ref{examples of special sets}(2).
We may assume that $p\neq2$, since otherwise $G^{(3)}=G_{(3)}$, and this subgroup was described in (1).
Now by Example \ref{examples of L(G)}(2), $\calL(G)=\{M_{p^3},\dbZ/p^2\}$.
By \cite{EfratMinac11}*{Prop. 10.2}, every epimorphism $G\to \dbZ/p$ breaks via $\dbZ/p^2$ or via $M_{p^3}$.
Consequently, Theorem \ref{fourth main thm} gives
\[
G^{(3)}=\bigcap\{M\trianglelefteq G\ |\ G/M\isom1,\dbZ/p^2,M_{p^3}\}.
\]
This result was earlier proved in \cite{EfratMinac11}.

\medskip

(3)\quad
Let $(T,T_0,\alp)$  be as in Example \ref{examples of cohomological duality triples}(3), and take
$\Sig$ as in Example \ref{examples of special sets}(3).
By Example \ref{examples of L(G)}(2), $\calL(G)=\{\dbZ/p^2\}$.
We get the equality (already noted in Example \ref{examples of duality}(3))
\[
G^{p^2}[G,G]=\bigcap\{M\trianglelefteq G \ |\ G/M\isom1,\dbZ/p,\dbZ/p^2\}.
\]
In fact, since a discrete abelian group of finite exponent
is a direct sum of cyclic groups \cite{Kaplansky69}*{Th.\ 6}, we get using Pontrjagin duality that
\[
G^{p^n}[G,G]=
\bigcap\{M\trianglelefteq G \ |\ G/M\isom\dbZ/p^j,\ j=0,1\nek n\}.
\]
\end{examples}

\section{Duality in free pro-$p$ groups}
\label{section on free groups}
In this section we prove the duality mentioned in Example \ref{examples of duality}(1).
First we study the pairing in Proposition \ref{generalized basic duality}(b) when $G=S$ satisfies $H^2(S)=0$ and when
$T=S^{(2)}$ and $T_0=S^{(3)}$.
Given $\sig\in S$, we write $\bar\sig$ for its image in $S^{[2]}$.
The next two lemmas extend computations in \cite{Labute66}*{\S2.3}, \cite{Koch02}*{\S7.8}
and \cite{NeukirchSchmidtWingberg}*{3.9.13}.
Let $\delta=1,2$ be as in (\ref{delta}).

\begin{lem}
\label{lemma on cup products}
Let $\chi,\chi'\in H^1(S^{[2]})$ and $\sig,\sig'\in S$.
\begin{enumerate}
\item[(a)]
If there is a homomorphism $h\colon S\to \dbZ/q$ with $h(\sig)=\bar1$, then
$\langle \sig^qS^{(3)}, \chi\cup\chi'\rangle=(q/\del)\chi(\bar\sig)\chi'(\bar\sig)$ $(=0$ for $q$ odd).
\item[(b)]
$\langle[\sig,\sig']S^{(3)},\chi\cup\chi'\rangle
=\chi(\bar\sig')\chi'(\bar\sig)-\chi(\bar\sig)\chi'(\bar\sig')$.
\end{enumerate}
\end{lem}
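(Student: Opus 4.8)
The plan is to exploit the hypothesis $H^2(S)=0$, which forces the transgression $\trg\colon H^1(S^{(2)})^S\to H^2(S^{[2]})$ to be an isomorphism (by the $5$-term sequence, since inflation to $H^2(S)=0$ vanishes). Hence for any $\tau\in S^{(2)}$ one has $\langle\tau S^{(3)},\chi\cup\chi'\rangle=\psi(\tau)$, where $\psi=\trg\inv(\chi\cup\chi')\in H^1(S^{(2)})^S$; I apply this with $\tau=\sig^q$ and $\tau=[\sig,\sig']$, both of which lie in $S^{(2)}=S^q[S,S]$. The whole problem thus reduces to evaluating the single homomorphism $\psi$ on these two elements.

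To do this I would realize $\chi\cup\chi'$ by the explicit central extension
\[
\omega\colon\quad 0\to\dbZ/q\to C\to S^{[2]}\to 1,
\]
whose underlying set is $\dbZ/q\times S^{[2]}$ with the cocycle multiplication $(z_1,\bar g_1)(z_2,\bar g_2)=(z_1+z_2+\chi(\bar g_1)\chi'(\bar g_2),\,\bar g_1\bar g_2)$. Since $H^2(S)=0$, the class $\chi\cup\chi'$ is transgressive, so by Hoechsmann's lemma (exactly as in the proof of Proposition \ref{equivelence for dualities}, (e)$\Leftrightarrow$(f)) there is a homomorphism $\Psi\colon S\to C$ lifting the projection $S\to S^{[2]}$ with $\psi=\Psi|_{S^{(2)}}$, the restriction viewed as a map into the central copy of $\dbZ/q$. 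Because $\Psi$ is a genuine group homomorphism, $\psi(\sig^q)=\Psi(\sig)^q$ and $\psi([\sig,\sig'])=[\Psi(\sig),\Psi(\sig')]$, each computed \emph{inside} $C$; and since $\bar\sig^q=1$ and $[\bar\sig,\bar\sig']=1$ in the abelian group $S^{[2]}$, both results automatically land in the central $\dbZ/q$.

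The computation is now internal to $C$. Writing $\Psi(\sig)=(z,\bar\sig)$, an easy induction gives $\Psi(\sig)^n=\bigl(nz+\binom{n}{2}\chi(\bar\sig)\chi'(\bar\sig),\,\bar\sig^n\bigr)$; at $n=q$ we have $qz=0$ and $\bar\sig^q=1$, leaving the central element $\binom{q}{2}\chi(\bar\sig)\chi'(\bar\sig)$. Reducing $\binom{q}{2}=q(q-1)/2$ modulo $q$ gives $0$ when $p$ is odd and $q/2$ when $p=2$, i.e.\ $(q/\del)\chi(\bar\sig)\chi'(\bar\sig)$ in both cases, which is (a). For (b), the commutator of two lifts in a central extension equals the antisymmetrization of the cocycle, so $[\Psi(\sig),\Psi(\sig')]=\chi(\bar\sig)\chi'(\bar\sig')-\chi(\bar\sig')\chi'(\bar\sig)$ in the center, which is the claimed formula up to sign. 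The hypothesis in (a) that some $h\colon S\to\dbZ/q$ satisfies $h(\sig)=\bar1$ serves (as in the cited computations of Labute, Koch, and NSW) to place $\sig$ inside a basis of $S^{[2]}\isom(\dbZ/q)^I$, i.e.\ to treat it as a free generator modulo $S^{(2)}$.

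The main obstacle I anticipate is the sign bookkeeping in (b): the displayed ordering $\chi(\bar\sig')\chi'(\bar\sig)-\chi(\bar\sig)\chi'(\bar\sig')$ depends on reconciling three conventions — the sign in the cup-product cocycle, the orientation of transgression, and Hoechsmann's identification $\psi=\Psi|_{S^{(2)}}$ — which I would pin down against \cite{EfratMinac11}*{(2.2)} and \cite{Hoechsmann68}*{2.1} so as to land on the stated sign. Reassuringly, part (a) is insensitive to this ambiguity, since $q/2=-q/2$ in $\dbZ/q$, so the only genuine sign question lives in the antisymmetric formula (b).
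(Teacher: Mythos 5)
Your argument is correct, and at bottom it is the same computation as the paper's, dressed in extension-theoretic rather than cochain language. The paper trivializes the inflated $2$-cocycle $c(\sig,\tau)=\chi(\bar\sig)\chi'(\bar\tau)$ by an inhomogeneous $1$-cochain $u\colon S\to\dbZ/q$ (possible since $H^2(S)=0$), identifies $v=u|_{S^{(2)}}$ with $\trg\inv(\chi\cup\chi')$ up to sign, and evaluates $u$ on $\sig^q$ and $[\sig,\sig']$ through cocycle identities; you instead trivialize the pulled-back central extension by a lift $\Psi\colon S\to C$ and evaluate $\Psi$ on the same elements. The two are literally equivalent: writing $\Psi(\sig)=(w(\sig),\bar\sig)$, the homomorphism condition on $\Psi$ says exactly that $-w$ cobounds $c$, and your induction for $\Psi(\sig)^n$ is the paper's induction for $u(\sig^i)$. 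What your packaging buys is that it runs through the same Hoechsmann mechanism the paper itself uses in proving (e)$\Leftrightarrow$(f) of Proposition \ref{equivelence for dualities}, so the identification of the pairing with evaluation of the lift requires no separate bookkeeping; one small gap is that your parenthetical justification of ``$\trg$ is an isomorphism'' only gives surjectivity --- injectivity is also needed (for $\trg\inv$ to make sense), and follows since $\res\colon H^1(S)\to H^1(S^{(2)})$ vanishes, every character of $S$ killing $S^{(2)}$; this is already built into the paper's pairing (\ref{pairing2}).

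Two caveats. First, the sign in (b): your computation yields $\chi(\bar\sig)\chi'(\bar\sig')-\chi(\bar\sig')\chi'(\bar\sig)$, the negative of the displayed formula, and you are right that this is purely a matter of conventions --- indeed a literal reading of the paper's own proof has the same tension, since it asserts $\langle\rho S^{(3)},\chi\cup\chi'\rangle=-u(\rho)$ and then concludes (b) from $u([\sig,\sig'])=\chi(\bar\sig')\chi'(\bar\sig)-\chi(\bar\sig)\chi'(\bar\sig')$. The ambiguity is harmless downstream: part (a) is sign-insensitive, and Propositions \ref{cup products} and \ref{identity} survive a global sign change (dual bases up to relabeling $\chi_i\cup\chi_j\leftrightarrow\chi_j\cup\chi_i$). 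Second, your gloss on the hypothesis in (a) is off: it is not there to ``place $\sig$ in a basis'' (note that $S^{[2]}$ need not be of the form $(\dbZ/q)^I$ in this lemma), and in fact your proof never uses it --- nor, in substance, does the paper's, since $qz=0$ in $\dbZ/q$ and $\bar\sig^q=1$ kill the dependence on the lift at $n=q$; the paper invokes $h$ only to normalize $u(\sig)=0$ so that the formula $u(\sig^i)=-\binom{i}{2}\chi(\bar\sig)\chi'(\bar\sig)$ is clean for all $i$, and proving (a) without the hypothesis is proving something (mildly) stronger, which is fine.
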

\begin{proof}
The cohomology class $\inf_S(\chi\cup\chi')$ in $H^2(S)$ is represented by the $2$-cocycle
$c(\sig,\tau)=\chi(\bar\sig)\chi'(\bar\tau)$.
Since $H^2(S)=0$, there exists an inhomogenous $1$-cochain $u\colon S\to\dbZ/q$ with $\partial u=c$.
Thus
\begin{equation}
\label{cochain and chi 2}
\chi(\bar\sig)\chi'(\bar\tau)=u(\sig)+u(\tau)-u(\sig\tau)
\end{equation}
for all $\sig,\tau\in S$.
In particular, for $\sig\in S$ and $\tau\in S^{(2)}$ we have $u(\sig\tau)=u(\sig)+u(\tau)=u(\tau\sig)$.
It follows that for $\tau\in S^{(2)}$ one has
\[
\begin{split}
u(\tau)&=u(\sig\inv)+u(\sig\tau)-\chi(\bar\sig\inv)\chi'(\bar\sig)\\
&=u(\sig\inv)+u(\tau\sig)-\chi(\bar\sig\inv)\chi'(\bar\sig)
=u(\sig\inv\tau\sig).
\end{split}
\]
Thus the restriction $v$ of $u$ to $S^{(2)}$ belongs to $H^1(S^{(2)})^ S$.
By the definition of the transgression \cite{NeukirchSchmidtWingberg}*{Prop.\ 1.6.6}, $\trg_{S^{[2]}}(v)=\chi\cup\chi'$.
Consequently, for every $\rho\in S^{(2)}$ we have
\[
\langle \rho S^{(3)},\chi\cup\chi'\rangle=
\langle \rho S^{(3)}, \trg_{S^{[2]}}(v)\rangle=-v(\rho)=-u(\rho).
\]

(a)\quad
When $\sig\in S^{(2)}$ both sides are zero.
So assume that $\sig\not\in S^{(2)}$.
Our assumption gives a homomorphism $h\colon S\to\dbZ/q$ with $h(\sig)=u(\sig)$.
As $\partial h=0$, we may replace $u$ by $u-h$ to assume that $u(\sig)=0$.
Using (\ref{cochain and chi 2}) we obtain inductively that
$u(\sig^i)=-\binom i2\chi(\bar\sig)\chi'(\bar\sig)$.
It remains to observe that $\binom q2\equiv q/\del\pmod q$.

\medskip

(b)\quad
Apply (\ref{cochain and chi 2}) with $\tau=1$ to obtain $u(1)=0$.
Apply it with $\tau=\sig\inv$ to further obtain $u(\sig\inv)+u(\sig)=-\chi(\bar\sig)\chi'(\bar\sig)$.
This gives
\[
\begin{split}
u((\sig'\sig)\inv)+u(\sig'\sig)&=-\chi(\bar\sig'\bar\sig)\chi'(\bar\sig'\bar\sig) \\
u(\sig\sig')&=u(\sig)+u(\sig')-\chi(\bar\sig)\chi'(\bar\sig') \\
-u(\sig'\sig)&=-u(\sig')-u(\sig)+\chi(\bar\sig')\chi'(\bar\sig) \\
\end{split}
\]
Adding these equalities we obtain
\[
u((\sig'\sig)\inv)+u(\sig\sig')
=-\chi(\bar\sig'\bar\sig)\chi'(\bar\sig'\bar\sig)+\chi(\bar\sig')\chi'(\bar\sig)-\chi(\bar\sig)\chi'(\bar\sig').
\]
Hence, by (\ref{cochain and chi 2}) again,
\[
\begin{split}
u([\sig,\sig'])&=
u((\sig'\sig)\inv)+u(\sig\sig')-\chi((\bar\sig\bar\sig')\inv)\chi'(\bar\sig\bar\sig') \\
&=\chi(\bar\sig')\chi'(\bar\sig)-\chi(\bar\sig)\chi'(\bar\sig').
\end{split}
\]
as desired.
\end{proof}

\begin{lem}
\label{lemma on Bocksteins}
Let $\chi\in H^1(S^{[2]})$ and $\sig,\sig'\in S$.
\begin{enumerate}
\item[(a)]
If $\chi(\bar\sig)=\bar0$, then $\langle \sig^qS^{(3)}, \beta_{S^{[2]}}(\chi)\rangle=\bar0$.
\item[(b)]
If $\chi(\bar\sig)=\bar1$, then $\langle \sig^qS^{(3)}, \beta_{S^{[2]}}(\chi)\rangle=\bar1$.
\item[(c)]
$\langle [\sig,\sig']S^{(3)}, \beta_{S^{[2]}}(\chi)\rangle=\bar0$.
\end{enumerate}
\end{lem}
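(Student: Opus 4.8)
The plan is to follow the cochain computation of Lemma~\ref{lemma on cup products}, replacing the cup-product cocycle by the Bockstein cocycle. First I would represent $\inf_S\beta_{S^{[2]}}(\chi)$ explicitly: choosing a set-theoretic lift $\tilde\chi\colon S^{[2]}\to\dbZ/q^2$ of $\chi$ with values in $\{0\nek q-1\}$, the inflated class is represented by the $2$-cocycle $c(\sig,\tau)=\frac1q\bigl(\tilde\chi(\bar\sig)+\tilde\chi(\bar\tau)-\tilde\chi(\overline{\sig\tau})\bigr)$, the bracket being read as the integer multiple of $q$ that it is. Since $H^2(S)=0$, there is a $1$-cochain $u\colon S\to\dbZ/q$ with $\partial u=c$; exactly as in Lemma~\ref{lemma on cup products}, its restriction $v=u|_{S^{(2)}}$ lies in $H^1(S^{(2)})^S$ and satisfies $\trg_{S^{[2]}}(v)=\beta_{S^{[2]}}(\chi)$. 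Consequently $\langle\rho S^{(3)},\beta_{S^{[2]}}(\chi)\rangle=-u(\rho)$ for every $\rho\in S^{(2)}$, and the three assertions reduce to computing $u(\sig^q)$ and $u([\sig,\sig'])$.

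For (a) and (b) I would telescope along powers of $\sig$ using $\partial u=c$: since $u(1)=\bar0$ and $q\,u(\sig)=\bar0$ in $\dbZ/q$, one gets $u(\sig^q)=-\sum_{i=1}^{q-1}c(\sig^i,\sig)$. Writing $\langle a\rangle$ for the residue of $a$ in $\{0\nek q-1\}$, the lift satisfies $\tilde\chi(\overline{\sig^i})=\langle i\chi(\bar\sig)\rangle$, so each $c(\sig^i,\sig)$ measures precisely the carry produced when $i\chi(\bar\sig)$ passes a multiple of $q$. If $\chi(\bar\sig)=\bar0$ all these terms vanish, giving $u(\sig^q)=\bar0$ and hence (a). If $\chi(\bar\sig)=\bar1$ there is exactly one carry, at $i=q-1$, contributing $\bar1$; thus $u(\sig^q)=-\bar1$ and the pairing equals $\bar1$, which is (b).

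For (c) I would run the same three-line manipulation as in Lemma~\ref{lemma on cup products}(b), using $[\sig,\sig']=(\sig'\sig)\inv(\sig\sig')$ and $u(gh)=u(g)+u(h)-c(g,h)$. The decisive simplification, absent in the cup-product case, is that the Bockstein cocycle is symmetric, $c(\sig,\sig')=c(\sig',\sig)$; this forces $u(\sig\sig')=u(\sig'\sig)$, so those terms drop out. What survives is $u([\sig,\sig'])=c(\sig'\sig,(\sig'\sig)\inv)-c((\sig'\sig)\inv,\sig\sig')$, and both of these values equal the single carry $\frac1q\bigl(\langle\chi(\bar\sig)+\chi(\bar\sig')\rangle+\langle-\chi(\bar\sig)-\chi(\bar\sig')\rangle\bigr)$, so they cancel and $u([\sig,\sig'])=\bar0$. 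Conceptually this is forced: by Examples~\ref{examples of duality}(3) the central extension attached to $\beta_{S^{[2]}}(\chi)$ is \emph{abelian} (an extension of a cyclic quotient by $\dbZ/q^2$), so under the identification $\langle\rho S^{(3)},\beta_{S^{[2]}}(\chi)\rangle=\Psi(\rho)$ coming from Proposition~\ref{equivelence for dualities}(f) one has $\Psi([\sig,\sig'])=[\Psi(\sig),\Psi(\sig')]$ trivial; the same viewpoint recovers (a) and (b) from the computation of $\Psi(\sig)^q$ inside $\dbZ/q^2$.

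The main obstacle I anticipate is purely bookkeeping: pinning down the normalization $\frac1q$ and the sign of the Bockstein cocycle so that the convention $\langle\rho S^{(3)},\trg(v)\rangle=-u(\rho)$ inherited from Lemma~\ref{lemma on cup products} makes (b) come out to exactly $\bar1$ rather than $-\bar1$. Everything else is a direct transcription of the earlier argument, with the symmetry of the Bockstein cocycle doing the real work in (c).
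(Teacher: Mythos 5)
Your proposal is correct and follows essentially the same route as the paper's own proof: the same explicit Bockstein cocycle $c=\frac1q\,\partial\tilde\chi$, the same use of $H^2(S)=0$ to produce $u$ with $\partial u=c$ and $\trg_{S^{[2]}}(u|_{S^{(2)}})=\beta_{S^{[2]}}(\chi)$, the same carry computation giving $u(\sig^q)=\bar0$ or $-\bar1$ in parts (a) and (b), and the same symmetry of $c$ (hence of $(\sig,\tau)\mapsto u(\sig\tau)$) forcing $u([\sig,\sig'])=\bar0$ in part (c). The telescoping-sum phrasing and the closing remark about abelian central extensions are just more explicit renderings of the paper's inductive argument and terse symmetry step.
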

\begin{proof}
Define a section $\iota$ of the projection $\dbZ/q^2\to\dbZ/q$ by $\iota(i+q\dbZ)=i+\dbZ/q^2$ for $0\leq i\leq q-1$.
Let $\tilde\chi=\iota\circ\chi\in\Hom(S^{[2]},\dbZ/q^2)$.
Then the cohomology class of $\beta_{S^{[2]}}(\chi)$ in $H^2(S^{[2]})$ is represented by the
$2$-cocycle
\[
c(\bar\sig,\bar\tau)=\dfrac1q\bigl(\tilde\chi(\bar\sig)+\tilde\chi(\bar\tau)-\tilde\chi(\bar\sig\bar\tau)\bigr).
\]
Inflating to $H^2(S)=0$, we obtain an inhomogenous $1$-cochain $u\colon S\to\dbZ/q$ with $\partial u=c$.
Thus
\begin{equation}
\label{cochain and chi 1}
u(\sig)+u(\tau)-u(\sig\tau)=\dfrac1q\bigl(\tilde\chi(\bar\sig)+\tilde\chi(\bar\tau)-\tilde\chi(\bar\sig\bar\tau)\bigr)
\end{equation}
for all $\sig,\tau\in S$.
Since $S^{[2]}$ is abelian, this implies that $u(\sig\tau)=u(\tau\sig)$, whence $u(\sig\inv\tau\sig)=u(\tau)$, for all
$\sig,\tau\in S$.
It follows that the restriction $v$ of $u$ to $S^{(2)}$ belongs to $H^1(S^{(2)})^ S$.
By the definition of the transgression,
$\trg_{S^{[2]}}(v)$ is represented by the $2$-cocycle $\partial u=c$.
Hence $\beta_{S^{[2]}}(\chi)=\trg_{S^{[2]}}(v)$.
Consequently, for every $\rho\in S^{(2)}$ we have
\[
\langle \rho S^{(3)},\beta_{S^{[2]}}(\chi)\rangle=
\langle \rho S^{(3)}, \trg_{S^{[2]}}(v)\rangle=-v(\rho)=-u(\rho).
\]

(a)\quad
If $\chi(\sig)=\bar0$, then $\tilde\chi(\sig^i)=0$ for every $i\geq0$.
It follows inductively from (\ref{cochain and chi 1}) that $u(\sig^i)=iu(\sig)$ for $i\geq0$.
Thus $u(\sig^q)=\bar0$, as required.

\medskip

(b)\quad
If $\chi(\sig)=\bar1$, then $\tilde\chi(\sig^i)=\bar i$ for $0\leq i\leq q-1$,
while $\tilde\chi(\sig^q)=\bar0$.
It follows from (\ref{cochain and chi 1}) by induction that $u(\sig^i)=iu(\sig)$ for $0\leq i\leq q-1$, and
$u(\sig^q)=qu(\sig)-\bar1=-\bar1$.

\medskip

(c)\quad
As $(\sig,\tau)\mapsto u(\sig\tau)$ is symmetric, $u([\sig,\sig'])=\bar0$ for all $\sig,\sig'\in S$.
\end{proof}

Next we also assume that $S^{[2]}\isom(\dbZ/q)^I$ for some index set $I$, e.g., $S$ is a free profinite (or pro-$p$ group) group.
Fix a linear order $<$ on $I$.
Choose a basis $\bar\sig_i$, $i\in I$, of $S^{[2]}$, and lift it to elements $\sig_i$, $i\in I$, of $S$.
Let $\chi_i$, $i\in I$, be the $\dbZ/q$-basis of $H^1(S^{[2]})$ dual to $\sig_i S^{(2)}$, $i\in I$.

\begin{prop}
\label{cup products}
\begin{enumerate}
\item[(a)]
Every $\sig\in S^{(2)}$ can be uniquely written as
\[
\sig=\prod_i\sig_i^{a_iq}\prod_{i<j}[\sig_i,\sig_j]^{b_{ij}}\pmod {S^{(3)}}
\]
for some some $a_i,b_{ij}\in\dbZ/q$.
\item[(b)]
The lists
\begin{enumerate}
\item[(i)]
$\sig_i^qS^{(3)}$, $i\in I$, and $[\sig_i,\sig_j]S^{(3)}$, $i,j\in I$, $i<j$;
\item[(ii)]
$\beta_{S^{[2]}}(\chi_i)$, $i\in I$, and  $\chi_i\cup\chi_j$, $i,j\in I$, $i<j$;
\end{enumerate}
form dual bases of $S^{(2)}/S^{(3)}$ and $H^2(S^{[2]})$, respectively, with respect to $\langle\cdot,\cdot\rangle$.
\end{enumerate}
\end{prop}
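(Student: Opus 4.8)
The plan is to deduce both parts from a single perfect pairing together with the two preceding lemmas, so that essentially all the content sits in one Gram-matrix computation. First I would record the ambient structure: $S^{(2)}/S^{(3)}$ is abelian (since $[S^{(2)},S^{(2)}]\leq[S^{(2)},S]\leq S^{(3)}$), of exponent dividing $q$ (since $(S^{(2)})^q\leq S^{(3)}$), hence a profinite $\dbZ/q$-module, and it is central in $S/S^{(3)}$, so that $[\sig,\sig']S^{(3)}$ depends $\dbZ/q$-bilinearly on $\bar\sig,\bar\sig'$. Because $H^2(S)=0$, the kernel $\Ker(\inf\colon H^2(S^{[2]})\to H^2(S))$ is all of $H^2(S^{[2]})$, so Example \ref{examples of duality}(2) together with condition (d) of Proposition \ref{equivelence for dualities} supplies a perfect pairing
\[
\langle\cdot,\cdot\rangle\colon S^{(2)}/S^{(3)}\times H^2(S^{[2]})\to\dbZ/q,
\]
which is exactly the pairing evaluated in Lemmas \ref{lemma on cup products} and \ref{lemma on Bocksteins}.

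Next I would compute the Gram matrix of lists (i) and (ii) against $\langle\cdot,\cdot\rangle$, using $\chi_i(\bar\sig_j)=1$ for $i=j$ and $0$ otherwise. Lemma \ref{lemma on Bocksteins}(a),(b) give $\langle\sig_i^qS^{(3)},\beta_{S^{[2]}}(\chi_j)\rangle=1$ if $i=j$ and $0$ otherwise, while Lemma \ref{lemma on Bocksteins}(c) gives $\langle[\sig_i,\sig_j]S^{(3)},\beta_{S^{[2]}}(\chi_k)\rangle=0$. For the cup products, Lemma \ref{lemma on cup products}(a) applies to $\sig_i$ via the homomorphism $h=\chi_i$ composed with $S\to S^{[2]}$, and yields $\langle\sig_i^qS^{(3)},\chi_k\cup\chi_l\rangle=(q/\del)\chi_k(\bar\sig_i)\chi_l(\bar\sig_i)=0$ because $k\neq l$; and Lemma \ref{lemma on cup products}(b) gives $\langle[\sig_i,\sig_j]S^{(3)},\chi_k\cup\chi_l\rangle=-1$ if $(i,j)=(k,l)$ and $0$ otherwise (for $i<j$, $k<l$). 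Thus the Gram matrix is block-diagonal, equal to the identity on the power/Bockstein block and to minus the identity on the commutator/cup block, and in particular invertible over $\dbZ/q$.

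With this matrix in hand I would finish part (a). For existence I would invoke the identities of Remark \ref{remark on the def of G3}(2), valid modulo $[[S,S],S]\leq[S^{(2)},S]\leq S^{(3)}$: bilinearity of the commutator puts every $[\sig,\sig']S^{(3)}$ in the closed span of the $[\sig_i,\sig_j]S^{(3)}$, $i<j$ (expand $\bar\sig,\bar\sig'$ in the basis and use $[\sig_i,\sig_i]=1$, $[\sig_j,\sig_i]=[\sig_i,\sig_j]\inv$), while the product formula makes $\sig\mapsto\sig^q$ a homomorphism modulo commutators and modulo $(S^{(2)})^q\leq S^{(3)}$, reducing each $\sig^q$ to $\prod_i(\sig_i^q)^{c_i}$ times a product of commutators; since the $q$th powers and commutators topologically generate $S^{(2)}$, list (i) spans $S^{(2)}/S^{(3)}$. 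Uniqueness is immediate from the Gram matrix: pairing a relation $\prod_i(\sig_i^q)^{a_i}\prod_{i<j}[\sig_i,\sig_j]^{b_{ij}}\in S^{(3)}$ with $\beta_{S^{[2]}}(\chi_k)$ forces $a_k=0$ and with $\chi_k\cup\chi_l$ forces $b_{kl}=0$. Hence list (i) is a (topological) basis of $S^{(2)}/S^{(3)}$, which is (a).

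For part (b) I would use perfectness as a profinite--discrete (Pontryagin) duality: as list (i) is a topological basis, $S^{(2)}/S^{(3)}\cong\prod_M\dbZ/q$ with $M=I\sqcup\{(i,j):i<j\}$, so $\langle\cdot,\cdot\rangle$ identifies $H^2(S^{[2]})$ with the discrete dual $\bigoplus_M\dbZ/q$; the Gram matrix shows that list (ii) corresponds, up to the signs computed above, to the dual basis, whence list (ii) is a $\dbZ/q$-basis of $H^2(S^{[2]})$ and the two lists are dual (the harmless signs being absorbed into the conventions, or into replacing $[\sig_i,\sig_j]$ by $[\sig_j,\sig_i]$). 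I expect the main obstacle to be bookkeeping rather than any deep step: first, the spanning argument in (a), where the product and commutator identities must be applied carefully modulo $S^{(3)}$ for arbitrary---not merely basis---elements $\sig$; and second, the topological accounting for infinite $I$, since $S^{(2)}/S^{(3)}$ is a profinite \emph{product} while $H^2(S^{[2]})$ is the discrete \emph{direct sum}, so that ``basis'' must be read in the appropriate sense and the perfect pairing used as a Pontryagin duality between the two.
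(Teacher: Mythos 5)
Your proposal is correct, and for part (b) it is essentially the paper's own argument: the paper likewise works with the perfect pairing $S^{(2)}/S^{(3)}\times H^2(S^{[2]})\to\dbZ/q$ coming from Proposition \ref{generalized basic duality}(b) (equivalently, Example \ref{examples of duality}(2) plus $H^2(S)=0$, as you set it up), reads off the same Gram matrix from Lemmas \ref{lemma on cup products} and \ref{lemma on Bocksteins}, notes that list (i) generates $S^{(2)}/S^{(3)}$ by the definition of $S^{(2)}$, and concludes that the two lists are dual bases. The genuine divergence is in part (a): the paper disposes of it by citing \cite{NeukirchSchmidtWingberg}*{Prop.\ 3.9.13(i)} together with a standard limit argument, whereas you re-prove it internally --- existence (spanning) via the Labute identities of Remark \ref{remark on the def of G3}(2) applied modulo $[[S,S],S]\leq S^{(3)}$, and uniqueness by pairing a putative relation against $\beta_{S^{[2]}}(\chi_k)$ and $\chi_k\cup\chi_l$. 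Your route buys self-containedness: uniqueness falls out of the very Gram-matrix computation needed for (b), so no external group-theoretic input beyond the identities already stated in the paper is required; the cost is the extra topological bookkeeping for infinite $I$ that you correctly flag (convergent products, closed spans, and reading the perfect pairing as a profinite--discrete duality between $\prod_M\dbZ/q$ and $\bigoplus_M\dbZ/q$). Two minor points in your favor: you make explicit the Pontryagin-duality step that the paper's ``hence they form bases'' leaves implicit, and you address the sign $\langle[\sig_i,\sig_j]S^{(3)},\chi_i\cup\chi_j\rangle=-1$, which the paper's assertion that the lists are ``dual'' glosses over.
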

\begin{proof}
(a)\quad
Use \cite{NeukirchSchmidtWingberg}*{Prop.\ 3.9.13(i)} and a standard limit argument.

\medskip

(b)\quad
By the definition of $S^{(2)}$, the list (i) generates $S^{(2)}/S^{(3)}$.
By Lemma \ref{lemma on cup products} and Lemma \ref{lemma on Bocksteins}, the two lists are dual.
Hence they form bases.
\end{proof}

\begin{prop}
\label{identity}
For $\chi\in H^1(S^{[2]})$ of order $q$ one has $\chi\cup\chi=(q/\del)\beta_{S^{[2]}}(\chi)$ $(=0$ if $p>2)$.
\end{prop}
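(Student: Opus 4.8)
The plan is to exploit the dual bases constructed in Proposition \ref{cup products}(b) and reduce the identity to an elementary congruence. Since the pairing $\langle\cdot,\cdot\rangle$ of Proposition \ref{generalized basic duality}(b) (taken for $T=S^{(2)}$, $T_0=S^{(3)}$) is perfect, two elements of $H^2(S^{[2]})$ coincide as soon as they pair identically with the generating set $\{\sig_i^qS^{(3)}\}\cup\{[\sig_i,\sig_j]S^{(3)}\}$ of $S^{(2)}/S^{(3)}$. Thus I would compare the pairings of $\chi\cup\chi$ and $(q/\del)\beta_{S^{[2]}}(\chi)$ against these generators.

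First I would treat the basis case $\chi=\chi_i$. Pairing $\chi_i\cup\chi_i$ against $\sig_k^qS^{(3)}$ via Lemma \ref{lemma on cup products}(a) (whose hypothesis holds, since the inflation of $\chi_k$ is a homomorphism $S\to\dbZ/q$ sending $\sig_k$ to $\bar1$) gives $(q/\del)\chi_i(\bar\sig_k)^2$, which equals $(q/\del)$ for $k=i$ and $0$ otherwise, while Lemma \ref{lemma on cup products}(b) gives $0$ on every commutator $[\sig_k,\sig_l]S^{(3)}$. By Lemma \ref{lemma on Bocksteins}(a)--(c), the element $(q/\del)\beta_{S^{[2]}}(\chi_i)$ has exactly the same pairings with these generators. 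Hence $\chi_i\cup\chi_i=(q/\del)\beta_{S^{[2]}}(\chi_i)$ for every $i$.

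Next I would pass to an arbitrary $\chi$ by writing $\chi=\sum_i c_i\chi_i$ with $c_i=\chi(\bar\sig_i)\in\dbZ/q$ (almost all zero). Since $\beta_{S^{[2]}}$ is additive, $\beta_{S^{[2]}}(\chi)=\sum_i c_i\beta_{S^{[2]}}(\chi_i)$. Since the cup product is bilinear and graded-commutative in degree $1$, one has $\chi_i\cup\chi_j+\chi_j\cup\chi_i=0$, so the cross terms cancel and $\chi\cup\chi=\sum_i c_i^2\,\chi_i\cup\chi_i=(q/\del)\sum_i c_i^2\beta_{S^{[2]}}(\chi_i)$ by the basis case. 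Comparing with $(q/\del)\sum_i c_i\beta_{S^{[2]}}(\chi_i)$ and using that the $\beta_{S^{[2]}}(\chi_i)$ are part of the basis (ii) of Proposition \ref{cup products}(b), the desired identity becomes equivalent to the congruences $(q/\del)(c_i^2-c_i)\equiv0\pmod q$ for all $i$.

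This last congruence is the one point requiring a split into cases, and the only place where the factor $q/\del$ matters. When $p$ is odd, $\del=1$ and $q\mid q(c_i^2-c_i)$ trivially (indeed both sides of the proposition vanish, by graded commutativity and $q$-torsion); when $p=2$, $\del=2$ and the product $c_i(c_i-1)$ of consecutive integers is even, so $(q/2)c_i(c_i-1)\in q\dbZ$. Thus the congruence holds in all cases and the identity follows. I would remark that this argument in fact yields $\chi\cup\chi=(q/\del)\beta_{S^{[2]}}(\chi)$ for \emph{every} $\chi\in H^1(S^{[2]})$; the hypothesis that $\chi$ has order $q$ is needed only for the alternative, more conceptual route, in which one writes $\chi=\chi^*\psi$ for the identity character $\psi$ of $\dbZ/q$ and deduces the claim by functoriality of $\cup$ and $\beta$ from the classical relation $\psi\cup\psi=(q/\del)\beta_{\dbZ/q}(\psi)$ in the cohomology ring of the cyclic group $\dbZ/q$. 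The main obstacle is bookkeeping: correctly invoking the hypothesis of Lemma \ref{lemma on cup products}(a) for each basis power, tracking the graded-commutativity sign uniformly in $q$, and handling the parity of $c_i(c_i-1)$ in the final congruence when $p=2$.
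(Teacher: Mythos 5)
Your proof is correct, and its engine is the same as the paper's: pair both candidates against the generators $\sig_i^qS^{(3)}$ and $[\sig_i,\sig_j]S^{(3)}$ of $S^{(2)}/S^{(3)}$ using Lemma \ref{lemma on cup products} and Lemma \ref{lemma on Bocksteins}, then conclude from the duality of Proposition \ref{cup products}(b). The difference is in how the general case is reduced to the basis case. The paper does this in one line: since $\chi$ has order $q$, it is surjective onto $\dbZ/q$, its kernel is a direct summand of $S^{[2]}$, and one may therefore choose the basis $\bar\sig_i$ so that $\chi=\chi_k$ is a dual basis element --- this is precisely where the order-$q$ hypothesis is used. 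You instead keep the basis fixed, expand $\chi=\sum_i c_i\chi_i$, cancel the cross terms by graded commutativity, and reduce the identity to the congruence $(q/\del)(c_i^2-c_i)\equiv 0\pmod q$, which holds because $c_i(c_i-1)$ is even. What your route buys is a strictly stronger statement: the identity $\chi\cup\chi=(q/\del)\beta_{S^{[2]}}(\chi)$ holds for \emph{every} $\chi\in H^1(S^{[2]})$, so the order-$q$ hypothesis in the proposition is superfluous; what the paper's route buys is brevity, at the cost of needing that hypothesis. One correction to your closing remark: the functorial route is also hypothesis-free. Any $\chi\in H^1(S^{[2]})$, of whatever order, is a continuous homomorphism $S^{[2]}\to\dbZ/q$, and pulling back the classical relation $\psi\cup\psi=(q/\del)\beta_{\dbZ/q}(\psi)$ (for $\psi$ the identity character of $\dbZ/q$) along $\chi$ gives the identity directly, since both $\cup$ and $\beta$ are functorial; the order-$q$ condition pertains to $\psi$, not to $\chi$. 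So neither your main argument nor the conceptual one needs the hypothesis --- only the paper's basis-change reduction does.
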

\begin{proof}
We may assume that $\chi=\chi_k$ for some $k\in I$.
For $i\in I$ Lemma \ref{lemma on cup products}(a) and Lemma \ref{lemma on Bocksteins}(a)(b)
give
\[
\langle \sig_i^qS^{(3)},\chi_k\cup\chi_k\rangle=(q/\del)\chi_k(\sig_i)
=\langle\sig_i^qS^{(3)},(q/\del)\beta_{S^{[2]}}(\chi_k)\rangle.
\]
For $i,j\in I$, $i<j$, Lemma \ref{lemma on cup products}(b) and Lemma \ref{lemma on Bocksteins}(c) give
\[
\langle [\sig_i,\sig_j]S^{(3)},\chi_k\cup\chi_k\rangle=0
=\langle[\sig_i,\sig_j]S^{(3)},(q/\del)\beta_{S^{[2]}}(\chi_k)\rangle.
\]
The assertion now follows by duality from Proposition \ref{cup products}(b).
\end{proof}

From this and Proposition \ref{cup products} we deduce

\begin{cor}
\label{decomposable part of H2(S2)}
When $p>2$ (resp., $p=2$), the elements $\chi_i\cup\chi_j$, $i<j$
(resp.\  $i\leq j$) form a basis of $H^2(S^{[2]})_\dec$.
\end{cor}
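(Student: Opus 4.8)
The plan is to read off the statement from Propositions \ref{identity} and \ref{cup products}, treating the diagonal products separately. First I would record that, by definition, $H^2(S^{[2]})_\dec$ is the image of the cup-product map $H^1(S^{[2]})^{\otimes2}\to H^2(S^{[2]})$. Since the $\chi_i$, $i\in I$, form a $\dbZ/q$-basis of $H^1(S^{[2]})$, bilinearity shows that $H^2(S^{[2]})_\dec$ is generated as a $\dbZ/q$-module by all the products $\chi_i\cup\chi_j$, $i,j\in I$. Graded-commutativity of the cup product in degree $1$ gives $\chi_j\cup\chi_i=-\chi_i\cup\chi_j$, so I may discard the products with $i>j$; thus $\{\chi_i\cup\chi_j : i\le j\}$ already generates $H^2(S^{[2]})_\dec$.

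Next I would dispose of the diagonal terms using Proposition \ref{identity}, which yields $\chi_i\cup\chi_i=(q/\del)\beta_{S^{[2]}}(\chi_i)$. When $p>2$ we have $\del=1$, so $\chi_i\cup\chi_i=q\,\beta_{S^{[2]}}(\chi_i)=0$ and the diagonal generators vanish; hence $H^2(S^{[2]})_\dec$ is generated by $\{\chi_i\cup\chi_j : i<j\}$. By Proposition \ref{cup products}(b) these elements are part of a $\dbZ/q$-basis of $H^2(S^{[2]})$, hence $\dbZ/q$-linearly independent, and therefore form a basis of the (free) submodule they span, which is $H^2(S^{[2]})_\dec$. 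This settles the odd case immediately.

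For $p=2$ we have $\del=2$, so each diagonal term equals $(q/2)\beta_{S^{[2]}}(\chi_i)$, an element of order $2$ (coinciding with $\beta_{S^{[2]}}(\chi_i)$ exactly when $q=2$). Here I would work inside the free $\dbZ/q$-module $H^2(S^{[2]})$ with the basis $\{\beta_{S^{[2]}}(\chi_i)\}_i\cup\{\chi_i\cup\chi_j\}_{i<j}$ of Proposition \ref{cup products}(b), and expand a putative relation $\sum_{i<j}a_{ij}(\chi_i\cup\chi_j)+\sum_i c_i(\chi_i\cup\chi_i)=0$ in this basis. Comparing coefficients of $\chi_i\cup\chi_j$ ($i<j$) forces all $a_{ij}=0$, while the coefficient of $\beta_{S^{[2]}}(\chi_i)$ forces $c_i(q/2)\equiv0\pmod q$, i.e.\ $c_i\equiv0\pmod2$ — which is precisely the relation that already makes $c_i(\chi_i\cup\chi_i)=0$. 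Consequently $H^2(S^{[2]})_\dec$ decomposes as the direct sum of the cyclic subgroups generated by the $\chi_i\cup\chi_j$, $i<j$ (each of order $q$), and by the $\chi_i\cup\chi_i$, $i\in I$ (each of order $2$), so the listed elements form a basis as claimed.

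The only genuine subtlety, and the step I would flag, is this $p=2$ case: the diagonal generators have order $2$ rather than $q$, so $H^2(S^{[2]})_\dec$ need not be a free $\dbZ/q$-module, and the word ``basis'' must be understood as a direct-sum decomposition into cyclic summands of the indicated orders. Its verification is exactly the coefficient comparison above; everything else is a formal consequence of Propositions \ref{identity} and \ref{cup products}.
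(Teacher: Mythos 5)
Your proof is correct and fills in exactly the deduction the paper intends: the corollary is stated there as an immediate consequence of Propositions \ref{identity} and \ref{cup products}(b), via generation by the $\chi_i\cup\chi_j$, anticommutativity, and the identity $\chi_i\cup\chi_i=(q/\del)\beta_{S^{[2]}}(\chi_i)$ to handle the diagonal. Your flag about the $p=2$ case is also apt: there the diagonal generators have order $2$ (for $q>2$), so ``basis'' must indeed be read as a direct-sum decomposition into the indicated cyclic summands, which is what your coefficient comparison establishes.
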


To this end let $S$ be again a profinite group such that $H^2(S)=0$ and $S^{[2]}\isom(\dbZ/q)^I$.
Let $\sig_i$, $\chi_i$, $i\in I$, be bases as in the previous section.
Write $S^{[2]}=\prod_{i\in I}C_i$, where $C_i=\langle\sig_i S^{(2)}\rangle\isom\dbZ/q$.
Then $\res_{C_i}(\chi_i)$ generates $H^1(C_i)\isom\dbZ/q$, and $\beta_{C_i}(\res_{C_i}(\chi_i))$
generates $H^2(C_i)\isom\dbZ/q$, e.g.\  by Corollary \ref{decomposable part of H2(S2)}.

\begin{lem}
\label{coefficients}
Let $\alp\in H^2(S^{[2]})$ and let $d_i$, $i\in I$, and $d_{ij}$, $i<j$, $i,j\in I$,  be the unique elements
of $\dbZ/q$ such that
\begin{equation}
\label{ttt}
\alp=\sum_id_i\beta_{S^{[2]}}(\chi_i)+\sum_{i<j}d_{ij}\cdot\chi_i\cup\chi_j.
\end{equation}
Then for every  $k\in I$, one has $\res_{C_k}(\alp)=d_k\beta_{C_k}(\res_{C_k}(\chi_k))$.
\end{lem}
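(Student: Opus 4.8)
The plan is to apply $\res_{C_k}$ termwise to the expansion (\ref{ttt}) and to verify that every basis element appearing there, other than $\beta_{S^{[2]}}(\chi_k)$, restricts to zero on $C_k$. The entire argument is a functoriality computation, resting on the fact that restriction is a continuous ring homomorphism commuting with the Bockstein map.

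First I would record the effect of $\res_{C_k}$ in degree $1$. Since $\chi_i$ is by definition the element of the $\dbZ/q$-basis of $H^1(S^{[2]})$ dual to $\sig_i S^{(2)}$, we have $\chi_i(\sig_i S^{(2)})=\bar1$ and $\chi_i(\sig_j S^{(2)})=\bar0$ for $j\neq i$. As $C_k=\langle\sig_k S^{(2)}\rangle\isom\dbZ/q$, the restriction $\res_{C_k}(\chi_i)$ is thus the character of $C_k$ sending the generator $\sig_k S^{(2)}$ to $\bar1$ when $i=k$ and to $\bar0$ when $i\neq k$. Hence $\res_{C_k}(\chi_k)$ generates $H^1(C_k)\isom\dbZ/q$, while $\res_{C_k}(\chi_i)=0$ for every $i\neq k$.

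Next I would transport this to degree $2$. By the functoriality of the Bockstein map, $\res_{C_k}(\beta_{S^{[2]}}(\chi_i))=\beta_{C_k}(\res_{C_k}(\chi_i))$, which equals $\beta_{C_k}(\res_{C_k}(\chi_k))$ when $i=k$ and vanishes when $i\neq k$. Because restriction respects the cup product, $\res_{C_k}(\chi_i\cup\chi_j)=\res_{C_k}(\chi_i)\cup\res_{C_k}(\chi_j)$; for any pair with $i<j$ the two indices are distinct, so at least one of them differs from $k$, the corresponding factor restricts to $0$, and therefore $\res_{C_k}(\chi_i\cup\chi_j)=0$.

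Finally, applying the continuous homomorphism $\res_{C_k}$ to (\ref{ttt}) and inserting the two computations above annihilates every summand except $d_k\beta_{C_k}(\res_{C_k}(\chi_k))$, which is exactly the asserted value of $\res_{C_k}(\alp)$. I do not anticipate a genuine obstacle here; the only point deserving a little care is the vanishing of all the cup-product terms, which relies precisely on the constraint $i<j$ forcing $i\neq j$, so that the two factors cannot simultaneously restrict nontrivially to $C_k$.
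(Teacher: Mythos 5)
Your proposal is correct and follows essentially the same argument as the paper's proof: restrict (\ref{ttt}) termwise, use $\res_{C_k}(\chi_i)=0$ for $i\neq k$ together with functoriality of the Bockstein and multiplicativity of restriction on cup products, and note that the constraint $i<j$ forces at least one factor of each cup product to restrict to zero.
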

\begin{proof}
One has $\res_{C_k}(\chi_i)=0$ for $i\neq k$.
Therefore
\[
\res_{C_k}(\beta_{S^{[2]}}(\chi_i))=\beta_{C_k}(\res_{C_k}(\chi_i))=0
\]
for $i\neq k$, as well as
\[
\res_{C_k}(\chi_i\cup\chi_j)=\res_{C_k}(\chi_i)\cup\res_{C_k}(\chi_j)=0
\]
for all $i<j$, whence the desired equality.
\end{proof}

We deduce the following local-global principle for groups of the form $(\dbZ/q)^I$.

\begin{cor}
\label{local global}
There is an exact sequence
\[
0\to H^2(S^{[2]})_\dec\hookrightarrow H^2(S^{[2]})\xrightarrow{\prod\delta\res_C} \textstyle\prod_C H^2(C),
\]
where $C$ ranges over all cyclic subgroups of  $S^{[2]}$ of order $q$.
\end{cor}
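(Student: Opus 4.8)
The plan is to identify $\Ker\bigl(\prod_C\del\res_C\bigr)$ with $H^2(S^{[2]})_\dec$; since the left-hand arrow is simply the inclusion, exactness at $H^2(S^{[2]})_\dec$ is automatic, and the whole content of the corollary is the equality $\Ker\bigl(\prod_C\del\res_C\bigr)=H^2(S^{[2]})_\dec$. I would prove the two inclusions separately, using the basis of $H^2(S^{[2]})$ furnished by Proposition \ref{cup products}(b).

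First I would show $H^2(S^{[2]})_\dec\subseteq\Ker\bigl(\prod_C\del\res_C\bigr)$. As $H^2(S^{[2]})_\dec$ is generated by cup products $\chi\cup\chi'$ with $\chi,\chi'\in H^1(S^{[2]})$, it suffices to check that $\del\res_C(\chi\cup\chi')=0$ for every cyclic $C\leq S^{[2]}$ of order $q$. Fixing a generator $\psi$ of $H^1(C)\isom\dbZ/q$, one has $\res_C(\chi)=a\psi$ and $\res_C(\chi')=b\psi$ for suitable $a,b\in\dbZ/q$, so that $\res_C(\chi\cup\chi')=ab\,(\psi\cup\psi)$. By Proposition \ref{identity} applied to the cyclic group $C\isom\dbZ/q$ one has $\psi\cup\psi=(q/\del)\beta_C(\psi)$, whence $\del\res_C(\chi\cup\chi')=ab\,q\,\beta_C(\psi)=0$. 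This is precisely where the weight $\del$ earns its keep: it clears the denominator $q/\del$.

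For the reverse inclusion $\Ker\bigl(\prod_C\del\res_C\bigr)\subseteq H^2(S^{[2]})_\dec$ I would only need to test against the coordinate subgroups $C_k$. Writing $\alp\in H^2(S^{[2]})$ in the basis of Proposition \ref{cup products}(b) as in (\ref{ttt}), Lemma \ref{coefficients} gives $\res_{C_k}(\alp)=d_k\beta_{C_k}(\res_{C_k}(\chi_k))$, where $\beta_{C_k}(\res_{C_k}(\chi_k))$ generates $H^2(C_k)\isom\dbZ/q$. Hence the hypothesis $\del\res_{C_k}(\alp)=0$ forces $\del d_k=0$ in $\dbZ/q$ for every $k$. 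When $p>2$ this says $d_k=0$ for all $k$, so $\alp$ is a combination of the off-diagonal $\chi_i\cup\chi_j$ and lies in $H^2(S^{[2]})_\dec$ by Corollary \ref{decomposable part of H2(S2)}. When $p=2$ one only gets $2d_k=0$, i.e.\ $d_k\in(q/2)\dbZ/q$; but then $d_k\beta_{S^{[2]}}(\chi_k)$ is a multiple of $(q/2)\beta_{S^{[2]}}(\chi_k)=\chi_k\cup\chi_k$ by Proposition \ref{identity}, hence decomposable, and Corollary \ref{decomposable part of H2(S2)} again places $\alp$ in $H^2(S^{[2]})_\dec$.

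The only genuinely delicate point, and the hard part to get exactly right, is the case $p=2$: there $\chi_k\cup\chi_k$ does not vanish but equals $(q/2)\beta_{S^{[2]}}(\chi_k)$, so $H^2(S^{[2]})_\dec$ is strictly larger than the span of the off-diagonal cup products. The factor $\del=2$ in the map is calibrated so that the kernel condition $2d_k=0$ detects exactly the diagonal Bockstein contributions $d_k\beta_{S^{[2]}}(\chi_k)$ that are already decomposable — neither more nor less. Matching these two computations (the cyclic cup-square identity of Proposition \ref{identity} on the inclusion side, and the coefficient extraction of Lemma \ref{coefficients} on the kernel side) is where the argument must be run carefully; everything else is formal.
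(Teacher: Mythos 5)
Your proposal is correct, and its second half (the inclusion $\Ker\bigl(\prod_C\del\res_C\bigr)\subseteq H^2(S^{[2]})_\dec$) is exactly the paper's argument: express $\alp$ as in (\ref{ttt}), extract the diagonal coefficients $d_k$ via Lemma \ref{coefficients}, and conclude with Corollary \ref{decomposable part of H2(S2)}, treating the condition $\del d_k\equiv 0\pmod q$ correctly in both the odd and even cases. Where you genuinely diverge is the forward inclusion $H^2(S^{[2]})_\dec\subseteq\Ker\bigl(\prod_C\del\res_C\bigr)$. The paper gets this from the same coefficient criterion plus a change-of-basis trick: every cyclic subgroup $C$ of $S^{[2]}$ of order $q$ occurs as a coordinate subgroup $C_k=\langle\sig_kS^{(2)}\rangle$ for a suitable choice of the $\sig_i$, so the criterion ``$\alp$ decomposable iff $\del\res_{C_i}(\alp)=0$ for all coordinate subgroups'' applies to every $C$ at once. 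You instead restrict a general cup product $\chi\cup\chi'$ to $C$ and kill it with the cyclic cup-square identity $\psi\cup\psi=(q/\del)\beta_C(\psi)$, so that the factor $\del$ visibly clears the denominator. Both routes work: the paper's is shorter given its machinery, but silently uses the linear-algebra fact that any element of order $q$ in $(\dbZ/q)^I$ extends to a basis (and lifts to a choice of the $\sig_i$); yours avoids that, but note that Proposition \ref{identity} is stated for groups of the form $S^{[2]}$ with $H^2(S)=0$, not for arbitrary cyclic groups, so to apply it to $C$ you should add the one-line observation that $C\isom\dbZ_p^{[2]}$ with $H^2(\dbZ_p)=0$ (or simply invoke the standard identity $\psi\cup\psi=(q/\del)\beta(\psi)$ for $\dbZ/q$ directly). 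With that remark added, your proof is complete.
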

\begin{proof}
Let $\alp\in H^2(S^{[2]})$ and express it as in (\ref{ttt}).
By Corollary \ref{decomposable part of H2(S2)}, $\alp\in H^2(S^{[2]})_\dec$ if and only if $\del d_i\equiv 0\pmod q$
for every $i\in I$.
Since $\beta_{C_i}(\res_{C_i}(\chi_i))$ generates $H^2(C_i)$, Lemma \ref{coefficients}
shows that this is equivalent to $\del\res_{C_i}(\alp)=0$.
It remains to note that every cyclic subgroup $C$ of $S^{[2]}$ of order $q$
occurs as $C_k=\langle\sig_kS^{(2)}\rangle$ for some choice of $\sig_i$, $\psi_i$.
\end{proof}

Now let $G$ be a profinite group.
Given a subgroup $C$ of $G^{[2]}$, take a normal subgroup $M$ of $G$ containing $G^{(2)}$
with $C=M/G^{(2)}$.
Then $H^1(G^{(2)})^G\leq H^1(G^{(2)})^M$, and by the functoriality of the transgression,
there is a commutative square
\begin{equation}
\label{sss}
\xymatrix{
H^1(G^{(2)})^G \ar[r]^{\trg_{G^{[2]}}}  \ar@{_{(}->}[d] & H^2(G^{[2]})\ar[d]^{\res_C} \\
H^1(G^{(2)})^M \ar[r]^{\trg_C} & H^2(C) .
}
\end{equation}
This makes condition (c) of the following proposition meaningful:

\begin{prop}
\label{kkk}
Let $G$ be a profinite group with $G^{[2]}\isom(\dbZ/q)^I$ for some index set $I$.
The following conditions on $\psi\in H^1(G^{(2)})^G$ are equivalent:
\begin{enumerate}
\item[(a)]
$\trg_{G^{[2]}}(\psi)\in H^2(G^{[2]})_\dec$;
\item[(b)]
$\del\trg_C(\psi)=0$ for every cyclic subgroup $C$ of $G^{[2]}$ of order $q$;
\item[(c)]
for every normal subgroup $M$ of $G$ containing $G^{(2)}$ with $M/G^{(2)}\isom\dbZ/q$
there exists $\hat\psi\in H^1(M)$ with $\del\psi=\res_{G^{(2)}}(\hat\psi)$;
\item[(d)]
$\psi(G_{(3)})=0$.
\end{enumerate}
\end{prop}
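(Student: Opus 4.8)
The plan is to prove the chain of equivalences (a)$\Leftrightarrow$(b)$\Leftrightarrow$(c)$\Leftrightarrow$(d), treating each link with an essentially independent tool. The two cohomological links (a)$\Leftrightarrow$(b) and (b)$\Leftrightarrow$(c) are formal consequences of the local--global sequence and of inflation--restriction; the entire arithmetic content is concentrated in the last link (c)$\Leftrightarrow$(d), where the quotient $G_{(3)}/G^{(3)}$ must be computed explicitly.

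For the formal links I would argue as follows. For (a)$\Leftrightarrow$(b): since $G^{[2]}\isom(\dbZ/q)^I$, Corollary \ref{local global} applies to the single element $\theta=\trg_{G^{[2]}}(\psi)\in H^2(G^{[2]})$ and gives $\theta\in H^2(G^{[2]})_\dec$ if and only if $\del\res_C(\theta)=0$ for every cyclic $C\leq G^{[2]}$ of order $q$. The compatibility square (\ref{sss}) identifies $\res_C\trg_{G^{[2]}}(\psi)$ with $\trg_C(\psi)$, so this reads $\del\trg_C(\psi)=0$ for all such $C$, which is exactly (b). For (b)$\Leftrightarrow$(c): the normal subgroups $M$ of $G$ with $G^{(2)}\leq M$ and $M/G^{(2)}\isom\dbZ/q$ correspond bijectively to the order-$q$ cyclic subgroups $C=M/G^{(2)}$ (every subgroup of the abelian group $G^{[2]}$ being normal), and for each such $M$ the inflation--restriction sequence of $1\to G^{(2)}\to M\to C\to 1$ is exact at $H^1(G^{(2)})^M$. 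Thus $\del\psi$ lies in the image of $\res\colon H^1(M)\to H^1(G^{(2)})^M$ --- condition (c) for $M$ --- precisely when $\trg_C(\del\psi)=\del\trg_C(\psi)=0$, and letting $M$ range yields (b)$\Leftrightarrow$(c).

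For (c)$\Leftrightarrow$(d) I would first note that every $G$-invariant $\psi$ annihilates $[G^{(2)},G]$ and $(G^{(2)})^q$, hence $G^{(3)}$, so $\psi$ factors through $G^{(2)}/G^{(3)}$ and (d) is equivalent to $\psi$ vanishing on the image of $G_{(3)}=G^{\del q}[G^{(2)},G]$, i.e.\ on all $\del q$-th powers. A direct cyclic-extension count shows that $\del\psi$ extends over $M=\langle g,G^{(2)}\rangle$ if and only if $\del\psi(g^q)=\psi(g^{\del q})=0$; so (c) asserts $\psi(g^{\del q})=0$ for every $g$ with $\bar g$ of order exactly $q$, while (d) asserts it for all $g$. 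To identify the two, I would choose lifts $g_i$ of a basis of $G^{[2]}$ and invoke the power--commutator identities of Remark \ref{remark on the def of G3}(2) together with $\binom{q}{2}\equiv q/\del\pmod{q}$ to obtain $g^{\del q}\equiv\prod_i g_i^{\del c_i q}\pmod{G^{(3)}}$: the commutator corrections acquire the factor $\binom{q}{2}\equiv q/\del$ and are annihilated by the residual factor $\del$. Consequently $G_{(3)}/G^{(3)}$ is generated by the classes of the $g_i^{\del q}$, all arising from order-$q$ basis elements, so both (c) and (d) collapse to the one family of conditions $\del\psi(g_i^q)=0$ and therefore coincide.

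The main obstacle is exactly this computation of $G_{(3)}/G^{(3)}$: the simultaneous bookkeeping of the two appearances of $\del$ --- explicitly in $G_{(3)}=G^{\del q}[G^{(2)},G]$ and implicitly through $\binom{q}{2}\equiv q/\del$, which governs how $q$-th powers interact with commutators --- and the resulting uniform treatment of the cases $p>2$ ($\del=1$) and $p=2$ ($\del=2$). An alternative for this link would be to pass to a cover $S\to G$ with $H^2(S)=0$, transport $\psi$ to $\tilde\psi\in H^1(S^{(2)})^S$, and read off the Bockstein coefficients of $\trg_{S^{[2]}}(\tilde\psi)$ against the dual bases of Proposition \ref{cup products}; either route isolates the same identity $\binom{q}{2}\equiv q/\del$ (via Proposition \ref{identity}) as the crux.
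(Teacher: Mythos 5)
Your treatment of the first two equivalences is exactly the paper's: (a)$\Leftrightarrow$(b) is Corollary \ref{local global} read through the compatibility square (\ref{sss}), and (b)$\Leftrightarrow$(c) is the five-term sequence of $1\to G^{(2)}\to M\to C\to 1$, together with the (correct) observation that the admissible $M$ correspond bijectively to the cyclic subgroups $C\leq G^{[2]}$ of order $q$. Where you genuinely diverge is (c)$\Leftrightarrow$(d). The paper's argument is basis-free: every element of $G$ lies in \emph{some} admissible $M$ (any element of $(\dbZ/q)^I$ sits inside a cyclic subgroup of order exactly $q$); for each such $M$, the existence of $\hat\psi$ is equivalent to $\del\psi(M^q)=0$, the nontrivial direction coming from the splitting of $1\to G^{(2)}/M^q\to M/M^q\to \dbZ/q\to 1$, which lets the $M$-invariant homomorphism induced by $\del\psi$ extend; letting $M$ range, condition (c) becomes $\psi(G^{\del q})=0$, which equals $\psi(G_{(3)})$ because $G$-invariance already kills $[G^{(2)},G]$. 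You instead prove the finer criterion that extendability over $M=\langle g,G^{(2)}\rangle$ is governed by the single value $\del\psi(g^q)$ (this is the standard transgression computation for a cyclic quotient, or can be done by writing down $\hat\psi$ by hand), and you then must compute $G_{(3)}$ modulo $G^{(3)}$ by Hall--Petrescu-type identities to bridge the gap between ``all $g$ with $\bar g$ of order exactly $q$'' (which is what (c) gives) and ``all $g\in G$'' (which is what (d) demands). Your congruence $g^{\del q}\equiv\prod_i g_i^{\del c_iq}\pmod{G^{(3)}}$ does hold: writing $g=xh$ with $x=\prod_i g_i^{c_i}$, $h\in G^{(2)}$, and working modulo $[[G,G],G]\leq[G^{(2)},G]\leq G^{(3)}$, the terms $h^{\del q}$ and $[h,x]$ die in $G^{(3)}$, and the commutator corrections in expanding $x^{\del q}$ carry exponents $c_ic_j\binom{\del q}{2}$, which are divisible by $q$ for both $\del=1$ and $\del=2$, hence land in $(G^{(2)})^q\leq G^{(3)}$. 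So your argument closes, but this bookkeeping is precisely what the paper's union-of-$M$'s device avoids: elements whose image in $G^{[2]}$ has order $<q$ generate no admissible $M$, and the paper handles them by embedding them in a larger $M$ and testing all of $M^q$ rather than one generator's $q$th power. What your route buys is explicit structural information --- generators $g_i^{\del q}$ of $G_{(3)}/G^{(3)}$ and a visible role for the identity $\binom q2\equiv q/\del\pmod q$ (cf.\ Proposition \ref{identity}); the paper's route is shorter and requires neither a choice of basis nor any power--commutator identity.
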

\begin{proof}
(a)$\Leftrightarrow$(b):  \quad
Since $G^{[2]}\isom S^{[2]}$ for some free pro-$p$ group $S$,
this follows from Corollary \ref{local global} and (\ref{sss}).

\medskip

(b)$\Leftrightarrow$(c):\quad
Use the 5-term sequence associated with $C=M/G^{(2)}$.

\medskip

(c)$\Leftrightarrow$(d):\quad
We may assume that $G\neq\{1\}$.
Then $G$ is the union of its subgroups $M$ such that $G^{(2)}\leq M$ and $M/G^{(2)}\isom\dbZ/q$.
Then $M^q\leq G^{(2)}$ and there is a split extension
\[
1\to G^{(2)}/M^q\to M/M^q\to\dbZ/q\to0.
\]

If there is $\hat\psi\in H^1(M)$ with $\del\psi=\res_{G^{(2)}}(\hat\psi)$,
then $(\del\psi)(M^q)=\hat\psi(M^q)=\{0\}$.
Conversely, if $\del\psi$ vanishes on $M^q$, then it induces a homomorphism $\overline{\del\psi}\in H^1(G^{(2)}/M^q)^M$.
Since the above extension splits, $\overline{\del\psi}$ extends to a homomorphism $M/M^q\to\dbZ/q$,
so there is $\hat\psi\in H^1(M)$ as above.

Consequently, (c) is equivalent to $\psi(G^{\del q})=\{1\}$.
But as $\psi\in H^1(G^{(2)})^G$, one always has $\psi([G^{(2)},G])=\{1\}$, so $\psi(G_{(3)})=\psi(G^{\del q})$.
\end{proof}

The equivalence of (a) and (d) implies

\begin{cor}
\label{mmm}
Let $G$ be a profinite group with $G^{[2]}\isom(\dbZ/q)^I$ for some index set $I$.
Then  $H^2(G^{[2]})_\dec$ is dual to $(G^{(2)},G_{(3)})$.
\end{cor}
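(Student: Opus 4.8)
The plan is to read the duality off directly from the equivalence (a)$\Leftrightarrow$(d) of Proposition \ref{kkk}, which already contains all the substantive work. First I would check that the pair $(G^{(2)},G_{(3)})$ is admissible for the setup of \S\ref{section on duality}, i.e.\ that $(G^{(2)})^q[G^{(2)},G]\leq G_{(3)}\leq G^{(2)}$. The right inclusion, and the commutator part of the left one, are immediate from the definition $G_{(3)}=G^{\del q}[G^{(2)},G]$ in \eqref{delta}; the remaining inclusion $(G^{(2)})^q\leq G_{(3)}$ is exactly Remark \ref{remark on the def of G3}(2). Thus the constructions of \S\ref{section on duality} apply with $T=G^{(2)}$ and $T_0=G_{(3)}$, and the hypothesis $G^{[2]}\isom(\dbZ/q)^I$ is precisely what Proposition \ref{kkk} requires.

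Next I would take $A=H^2(G^{[2]})_\dec$, which is a subgroup of $H^2(G/T)=H^2(G^{[2]})$, and unwind the definition of duality. By the paragraph following Proposition \ref{equivelence for dualities}, $A$ is dual to $(G^{(2)},G_{(3)})$ as soon as the equivalent conditions of that proposition hold, so it suffices to verify a single one of them, say condition (a): $K=\trg\inv[A]$, where $K=\Ker\bigl(H^1(G^{(2)})^G\xrightarrow{\res}H^1(G_{(3)})\bigr)$ and $\trg=\trg_{G^{[2]}}$ is the transgression of the $5$-term sequence \eqref{5 term sequence}.

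The key observation is that, for an arbitrary $\psi\in H^1(G^{(2)})^G$, each side of $K=\trg\inv[A]$ is exactly one of the conditions of Proposition \ref{kkk}. Indeed, $\psi\in K$ means $\res_{G_{(3)}}(\psi)=0$, i.e.\ $\psi(G_{(3)})=0$, which is condition (d); and $\trg_{G^{[2]}}(\psi)\in A=H^2(G^{[2]})_\dec$ is condition (a). Hence the equivalence (a)$\Leftrightarrow$(d) of Proposition \ref{kkk} is literally the identity $K=\trg\inv[A]$, giving condition (a) of Proposition \ref{equivelence for dualities} and therefore the asserted duality.

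I expect essentially no obstacle at this final stage, since all the difficulty has been absorbed into Proposition \ref{kkk}: its proof reduces the decomposability of $\trg_{G^{[2]}}(\psi)$ to the vanishing $\del\res_C(\psi)=0$ over cyclic subgroups (via the local–global principle of Corollary \ref{local global} and the explicit dual bases of Proposition \ref{cup products}), and then identifies that with $\psi(G_{(3)})=0$. The only care needed in the present corollary is the bookkeeping that correctly matches $K$ and $\trg\inv[A]$ with the appropriately indexed conditions of Proposition \ref{kkk}, together with the remark that verifying one of the equivalent conditions of Proposition \ref{equivelence for dualities} suffices to conclude duality.
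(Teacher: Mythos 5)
Your proposal is correct and takes essentially the same route as the paper: the paper's proof of Corollary \ref{mmm} is precisely to invoke the equivalence (a)$\Leftrightarrow$(d) of Proposition \ref{kkk}, which, as you note, is literally the statement $K=\trg\inv[A]$ (condition (a) of Proposition \ref{equivelence for dualities}) for $T=G^{(2)}$, $T_0=G_{(3)}$, $A=H^2(G^{[2]})_\dec$. Your preliminary check that $(G^{(2)})^q[G^{(2)},G]\leq G_{(3)}\leq G^{(2)}$ via Remark \ref{remark on the def of G3}(2) is the same input the paper uses when verifying condition (A2) for this triple, so no step is missing.
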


\begin{bibdiv}
\begin{biblist}

\bib{Becker74}{article}{
    author={Becker, Eberhard},
     title={Euklidische K\"orper und euklidische H\"ullen von K\"orpern},
   journal={J. reine angew. Math.},
    volume={268/269},
      date={1974},
     pages={41\ndash 52},
}

\bib{BogomolovTschinkel12}{article}{
author={Bogomolov, Fedor},
author={Tschinkel, Yuri},
title={Introduction to birational anabelian geometry},
book={editor={L.\ Caporaso et al},
          title={In: ``Current Developments in Algebraic Geometry},
          series={MSRI Publications},
               volume={59},
              date={2012},
              publisher={Cambridge Univ. Press},
                           },
         pages={17\ndash 63},
label={BT12},
}

\bib{CheboluEfratMinac}{article}{
author={Chebolu, Sunil K.},
author={Efrat, Ido},
author={Min\' a\v c, J\'an},
title={Quotients of absolute Galois groups which determine the entire Galois cohomology},
journal={Math.\ Ann.},
volume={352},
date={2012},
pages={205\ndash221},
}

\bib{Efrat14a}{article}{
   author={Efrat, Ido},
   title={The Zassenhaus filtration, Massey products, and representations of profinite groups},
   journal={Adv.\ Math.},
   volume={263},
   date={2014},
   pages={389\ndash411},
}

\bib{Efrat14b}{article}{
   author={Efrat, Ido},
   title={Filtrations of free groups as intersections},
   journal={Arch. Math. (Basel)},
   volume={103},
   date={2014},
   pages={411--420},
}

\bib{EfratMinac11}{article}{
author={Efrat, Ido},
author={Min\' a\v c, J\'an},
title={On the descending central sequence of absolute Galois groups},
journal={Amer.\ J.\ Math.},
volume={133},
date={2011},
pages={1503\ndash1532},
}

\bib{EfratMinac12}{article}{
author={Efrat, Ido},
author={Min\' a\v c, J\'an},
title={Small Galois groups that encode valuations},
journal={Acta Arith.},
volume={156},
date={2012},
pages={7\ndash17},
}

\bib{FriedJarden}{book}{
    author={Fried, Michael D.},
    author={Jarden, Moshe},
     title={Field Arithmetic},
   edition={2},
 publisher={Springer},
     place={Berlin},
      date={2005},
     pages={xxiv+780},
}

\bib{GilleSzamuely}{book}{
 author={Gille, Philippe},
   author={Szamuely, Tam{\'a}s},
   title={Central Simple Algebras and Galois Cohomology},
   publisher={Cambridge University Press},
   place={Cambridge},
   date={2006},
   pages={xii+343},
}

\bib{Hoechsmann68}{article}{
  author={Hoechsmann, Klaus},
  title={Zum Einbettungsproblem},
  journal={J. reine angew. Math.},
  volume={229},
  date={1968},
  pages={81\ndash106},
}

\bib{Kaplansky69}{book}{
   author={Kaplansky, Irving},
   title={Infinite abelian groups},
   publisher={The University of Michigan Press, Ann Arbor, Mich.},
   date={1969},
   pages={vii+95},
}

\bib{Koch02}{book}{
    author={Koch, H.},
     title={Galois Theory of $p$-Extensions},
 publisher={Springer},
     place={Berlin},
      date={2002},
}

\bib{Labute66}{article}{
    author={Labute, John P.},
     title={Demu\v skin groups of rank $\aleph \sb{0}$},
   journal={Bull. Soc. Math. France},
    volume={94},
      date={1966},
     pages={211\ndash 244},
}

\bib{MerkurjevSuslin82}{article}{
    author={Merkurjev, A. S.},
    author={Suslin, A. A.},
     title={$K$-cohomology of Severi-Brauer varieties and the norm residue homomorphism},
  language={Russian},
   journal={Izv. Akad. Nauk SSSR Ser. Mat.},
    volume={46},
      date={1982},
     pages={1011\ndash 1046},
    translation={
        journal={Math. USSR Izv.},
         volume={21},
           date={1983},
   pages={307\ndash 340},
} }

 \bib{MinacSpira90}{article}{
   author={Min{\'a}{\v{c}}, J{\'a}n},
   author={Spira, Michel},
   title={Formally real fields, Pythagorean fields, $C$-fields and $W$-groups},
   journal={Math. Z.},
   volume={205},
   date={1990},
   pages={519--530},
    label={MSp90}
}

\bib{MinacSpira96}{article}{
  author={Min{\'a}{\v {c}}, J{\'a}n},
  author={Spira, Michel},
  title={Witt rings and Galois groups},
  journal={Ann. Math.},
  volume={144},
  date={1996},
  pages={35\ndash60},
  label={MSp96},
}

\bib{MinacSwallowTopaz14}{article}{
  author={Min\'a\v c, J\'an},
  author={Swallow, John},
  author={Topaz, Adam},
  title={Galois module structure of $(l^n)$th classes of fields},
  journal={Bull. London Math. Soc.},
  volume={46},
  date={2014},
  pages={143\ndash154},
}

\bib{MinacTan15a}{article}{
author={Min\'a\v c, J\'an},
author={T\^an, Nguyen Duy},
title={Triple Massey products},
journal={J.\ Eur.\ Math.\ Soc.},
date={2015},
eprint={arXiv:1307.6624v2},
status={to appear},
}

\bib{MinacTan15b}{article}{
author={Min\'a\v c, J\'an},
author={T\^an, Nguyen Duy},
title={The Kernel Unipotent Conjecture and the vanishing of Massey products for odd rigid fields},
status={(with an appendix by I.\ Efrat, J.\ Min\'a\v c, and N.D. T\^an)},
journal={Adv.\ Math.},
volume={273},
date={2015},
pages={242\ndash270},
}

\bib{NeukirchSchmidtWingberg}{book}{
  author={Neukirch, J{\"u}rgen},
  author={Schmidt, Alexander},
  author={Wingberg, Kay},
  title={Cohomology of Number Fields, Second edition},
  publisher={Springer},
  place={Berlin},
  date={2008},
}

\bib{Pop12}{article}{
   author={Pop, Florian},
   title={On the birational anabelian program initiated by Bogomolov I},
   journal={Invent. Math.},
   volume={187},
   date={2012},
   pages={511\ndash533},
}

\bib{RibesZalesskii}{book}{
  author={Ribes, Luis},
  author={Zalesskii, Pavel},
  title={Profinite groups, Second edition},
  publisher={Springer},
  place={Berlin},
  date={2010},
}

\bib{Serre65}{article}{
   author={Serre, Jean-Pierre},
    title={Sur la dimension cohomologique des groupes profinis},
  journal={Topology},
   volume={3},
     date={1965},
    pages={413\ndash420},
}

\bib{SerreCG}{book}{
   author={Serre, Jean-Pierre},
   title={Galois cohomology},
   publisher={Springer},
   place={Berlin},
   date={2002},
   pages={x+210},
}

\bib{Topaz12}{article}{
author={Topaz, Adam},
title={Commuting-liftable subgroups of Galois groups II},
journal={J.\ reine angew.\ Math.},
status={to appear},
date={2012},
eprint={arXiv:1208.0583v5},
status={to appear},
}

\bib{Villegas88}{thesis}{
  author={Villegas, Fernando Rodriguez},
  title={Relations between quadratic forms and certain Galois extensions},
  date={1988},
  organization={Ohio State University},
  type={a manuscript},
  eprint={http://www.math.utexas.edu/users/villegas/osu.pdf},
}

\bib{Voevodsky03a}{article}{
    author={Voevodsky, Vladimir},
     title={Motivic cohomology with $\mathbb{Z}/2$-coefficients},
     journal={Publ. Math. Inst. Hautes \'Etudes Sci.},
    volume={98},
      date={2003},
     pages={59\ndash 104},
}

\bib{Voevodsky11}{article}{
    author={Voevodsky, Vladimir},
     title={On motivic cohomology with $\mathbb{Z}/l$-coefficients},
     journal={Ann.\ Math.},
     volume={174},
     date={2011},
    pages={401--438},
}

\bib{Weibel08}{article}{
   author={Weibel, Charles A.},
    title={The proof of the Bloch--Kato conjecture},
  journal={ICTP Lecture Notes series},
   volume={23},
     date={2008},
    pages={1\ndash28},
}

\bib{Weibel09}{article}{
   author={Weibel, Charles A.},
    title={The norm residue isomorphism theorem },
    journal={J.\ Topology},
    volume={2},
     date={2009},
     pages={346\ndash372}
}

\end{biblist}
\end{bibdiv}

\end{document}